\theoremstyle{plain}
\newtheorem{thm}{Theorem}[section]
\newtheorem{claim}[thm]{Claim}
\numberwithin{theorem}{section}
\numberwithin{lemma}{section}
\numberwithin{corollary}{section}
\numberwithin{conjecture}{section}
\newtheorem*{conjecture*}{Conjecture}
\newtheorem{conclusion}{Conclusion}
\title{Sufficient conditions for 2-dimensional global rigidity}
\author{Xiaofeng Gu$^{1}$, Wei Meng$^{2}$,  Martin Rolek$^{3}$,  Yue Wang$^{4}$,  Gexin Yu$^{5}$}
\address{
$^{1}$ Department of Mathematics, University of West Georgia, Carrollton, GA, USA. \\
$^{2}$ School of Mathematical Sciences, Shanxi University, Taiyuan, Shanxi, China.\\
$^{3}$ Department of Mathematics, Kennesaw State University, Marietta, GA, USA.\\
$^{4}$ School of Mathematics, Shandong University, Jinan, Shandong, China.\\
$^{5}$ Department of Mathematics, William \& Mary, Williamsburg, VA, USA.}
\thanks{Research of the first author was partially supported by a grant from the Simons Foundation (522728). The work was done while the second and fourth author were at William \& Mary as visiting scholars. The second author is partially supported by Shanxi University and the National Natural Sciences Foundation for Young Scientists of China (11701349). The fourth author is partially  supported by the Chinese Scholarship Council.  The research of the last author was supported in part by a summer research grant from William \& Mary.}
\email{xgu@westga.edu, mengwei@sxu.edu.cn, mrolek1@kennesaw.edu, m15064013175@163.com,\\
 gyu@wm.edu}
\begin{document}
\date{}
\maketitle

\begin{abstract}
The 2-dimensional global rigidity has been shown to be equivalent to 3-connectedness and redundant rigidity by a combination of two results due to Jackson and Jord\'an, and Connelly, respectively. By the characterization, a theorem of  Lov\'asz and Yemini implies that every $6$-connected graph is redundantly rigid, and thus globally rigid. The 6-connectedness is best possible, since there exist infinitely many 5-connected non-rigid graphs. Jackson, Servatius and Servatius used the idea of ``essential connectivity'' and proved that every 4-connected ``essentially 6-connected'' graph is redundantly rigid and thus global rigid. Since 3-connectedness is a necessary condition of global rigidity, it is interesting to study 3-connected graphs for redundant rigidity and thus globally rigidity.  We utilize a different ``essential connectivity'', and  prove that every 3-connected essentially 9-connected graph is redundantly rigid and thus globally rigid. The essential 9-connectedness is best possible.  Under this essential connectivity, we also prove that every 4-connected essentially 6-connected graph is redundantly rigid and thus global rigid.  Our proofs are based on discharging arguments.
\end{abstract}
{\small \noindent {\bf Key words:} Rigid graph, redundant rigidity, global rigidity, essential connectivity}

\section{Introduction}
Undefined graph terminologies can be found in \cite{BoMu08}.
A {\bf $d$-dimensional framework} is a pair $(G, p)$, where $G(V, E)$ is a graph and
$p$ is a map from $V$ to $\mathbb{R}^d$. Roughly speaking, it is a straight line realization of $G$ in $\mathbb{R}^d$. Two frameworks $(G, p)$ and $(G, q)$ are {\bf equivalent} if $||p(u) - p(v) || = ||q(u) - q(v) ||$ holds for every edge $uv\in E$, where $||\cdot ||$ denotes the Euclidean norm in $\mathbb{R}^d$. Two frameworks $(G, p)$ and $(G, q)$ are {\bf congruent} if $||p(u) - p(v) || = ||q(u) - q(v) ||$ holds for every pair $u, v\in V$. A framework $(G,p)$ is {\bf generic} if the coordinates of all the points are algebraically independent.
A graph $G$ is {\bf globally rigid} if every framework $(G, q)$ which is equivalent to a generic framework $(G, p)$ is congruent to $(G, p)$.
A graph is {\bf rigid} if for every generic framework $(G, p)$ there exists an $\varepsilon >0$ such that if $(G, q)$ is equivalent to $(G, p)$ and $||p(u) - q(u) || < \varepsilon$ for every $u\in V$, then $(G, q)$ is congruent to $(G, p)$.
A graph $G$ is {\bf minimally rigid} if $G$ is rigid and $G-e$ is not rigid for all $e\in E$.
Laman~\cite{Lama70} provides a combinatorial characterization of minimally rigid graphs in $\mathbb{R}^2$, given below.

Let $G$ be a graph with vertex set $V(G)$ and edge set $E(G)$. For a subset $X\subseteq V(G)$, $G[X]$ and $E(X)$ denote the subgraph of $G$ induced by $X$ and the edge set of $G[X]$, respectively. A graph $G$ is {\bf sparse} if $|E(X)|\le 2|X|-3$ for every $X\subseteq V(G)$
with $|X|\ge 2$. If in addition $|E(G)|=2|V(G)|-3$, then $G$ is {\bf minimally rigid}.
In history, Pollaczek-Geiringer \cite{Poll1927,Poll1932} was the first who made notable progress on properties of
minimally rigid graphs. Laman \cite{Lama70} rediscovered and  characterized the minimally rigid graphs in $\mathbb{R}^2$
using the edge count property.   A minimally rigid graph is also known as a {\bf Laman graph} now.

By definition, any sparse graph is simple. A graph $G$ is {\bf rigid} if $G$ contains a spanning minimally rigid subgraph. It is not hard to see that every rigid graph with at least 3 vertices is $2$-connected.
A {\bf cover} of a graph $G$ is a collection $\mathcal{X}=\{X_1,X_2,...,X_t\}$ of subsets of $V(G)$ such that $E(G) = E(X_1)\cup E(X_2)\cup \cdots \cup E(X_t)$. Lov\'asz and Yemini \cite{LoYe82} obtained the following characterization of rigid graphs.

\begin{thm}[Lov\'asz and Yemini \cite{LoYe82}]\label{th1}
Let $G=(V,E)$ be a graph. Then $G$ is rigid if and only if for all covers $\mathcal{X}$ of $G$, we have $\sum_{X\in\mathcal{X}}(2|X|-3)\geq 2|V|-3$.
\end{thm}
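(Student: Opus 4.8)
The plan is to argue directly with the combinatorial notion of rigidity set up above: $G$ is rigid exactly when $E(G)$ contains a $2$-sparse subset $F$ with $|F| = 2|V|-3$, i.e.\ a spanning Laman subgraph (here we assume $|V|\ge 2$, and we may assume every member of a cover has at least two vertices, since a set of size $\le 1$ spans no edge). The forward direction is then immediate. If $G$ is rigid, let $F$ be the edge set of a spanning Laman subgraph and let $\mathcal X = \{X_1,\dots,X_t\}$ be any cover. Each edge of $F$ lies in some $E(X_i)$, so $|F| \le \sum_i |F \cap E(X_i)|$, and applying $2$-sparsity of $F$ to the set of endpoints of $F \cap E(X_i)$ (a subset of $X_i$) gives $|F \cap E(X_i)| \le 2|X_i| - 3$. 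Hence $\sum_{X\in\mathcal X}(2|X|-3) \ge |F| = 2|V|-3$.

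For the converse, assume every cover satisfies the inequality; I want to produce a spanning Laman subgraph. Choose a maximal $2$-sparse subset $F \subseteq E(G)$ and put $r = |F|$. Call $X \subseteq V(G)$ with $|X| \ge 2$ \emph{tight} if $|F \cap E(X)| = 2|X| - 3$. Two facts drive the proof. First, every edge $e \in E(G)$ lies in some tight set: if $e \in F$ then its two endpoints form a tight set, and if $e \notin F$ then maximality of $F$ forces $F+e$ to violate sparsity on some $X$ with $|X|\ge 2$, which (since $F$ itself is $2$-sparse) can only mean that $X$ is tight and contains $e$. Second, and this is the crux, tight sets close up under overlapping union: if $X,Y$ are tight with $|X \cap Y| \ge 2$, then in the chain
\[
(2|X|-3)+(2|Y|-3) \;=\; |F\cap E(X)| + |F\cap E(Y)| \;\le\; |F\cap E(X\cup Y)| + |F\cap E(X\cap Y)| \;\le\; (2|X\cup Y|-3)+(2|X\cap Y|-3)
\]
the first inequality is the submodularity $E(X)\cup E(Y)\subseteq E(X\cup Y)$, $E(X)\cap E(Y)=E(X\cap Y)$, the second is $2$-sparsity on $X\cup Y$ and $X\cap Y$, and the two ends are equal because $|X|+|Y|=|X\cup Y|+|X\cap Y|$; hence every inequality is an equality and in particular $X \cup Y$ is tight. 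Consequently two distinct maximal tight sets meet in at most one vertex.

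I would then take $\mathcal X^{*}$ to be the family of maximal tight sets. By the first fact (together with the observation that every tight set is contained in a maximal one), $\mathcal X^{*}$ is a cover of $G$. Since its members pairwise share at most one vertex, each edge of $F$ lies in exactly one of them, so
\[
\sum_{X \in \mathcal X^{*}} (2|X|-3) \;=\; \sum_{X \in \mathcal X^{*}} |F\cap E(X)| \;=\; |F| \;=\; r .
\]
The cover hypothesis now yields $r \ge 2|V|-3$, while $2$-sparsity applied to $X = V(G)$ gives $r \le 2|V|-3$; thus $r = 2|V|-3$. Finally a $2$-sparse set of this size must span $V(G)$, since otherwise it lies inside $E(V(G)-v)$ for some vertex $v$ and has at most $2|V|-5$ edges; so $F$ is a spanning Laman subgraph and $G$ is rigid.

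The main obstacle is the submodularity/union step for tight sets and the care needed to turn it into a genuine cover: verifying that the maximal tight sets actually partition the edge set $F$, handling members of size $\le 1$, and checking the small cases $|V| \le 2$. Everything else is bookkeeping. It is worth noting that the argument above in fact delivers the stronger statement that $r = \min_{\mathcal X}\sum_{X\in\mathcal X}(2|X|-3)$ for the maximum size of a $2$-sparse subset of $E(G)$ (the forward inequality is the computation in the first paragraph, and the cover $\mathcal X^{*}$ attains it), from which Theorem~\ref{th1} follows at once.
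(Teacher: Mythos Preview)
The paper does not prove Theorem~\ref{th1}; it is quoted from Lov\'asz and Yemini~\cite{LoYe82} and used as a black box in the proofs of Theorems~\ref{th4ess6} and~\ref{th3ess9}. So there is no proof in the paper to compare your attempt against.

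That said, your argument is correct and is essentially the original one: the converse is the rank-formula computation for the $(2,3)$-sparsity matroid, with the key submodularity step (two tight sets meeting in at least two vertices have a tight union) and the cover by maximal tight sets, while the forward direction is the easy counting inequality. Your closing remark that the argument actually yields $\max\{|F|: F\ \text{is $2$-sparse in }G\}=\min_{\mathcal X}\sum_{X\in\mathcal X}(2|X|-3)$ is precisely the Lov\'asz--Yemini rank formula.

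One point deserves to be stated more carefully. Members $X$ of a cover with $|X|\le 1$ contribute negatively to $\sum(2|X|-3)$, so the forward implication is literally false if such sets are allowed: pad any cover with enough singletons and the sum drops below $2|V|-3$. Your parenthetical ``we may assume every member of a cover has at least two vertices'' is therefore not a WLOG reduction but a genuine (and standard) hypothesis on the covers under consideration; the paper and \cite{LoYe82} use it implicitly. In the converse direction this causes no trouble, since your witnessing cover $\mathcal X^{*}$ consists of tight sets, all of size at least two.
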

Note that the subgraphs induced by $X\in\mathcal{X}$ in Theorem~\ref{th1} need not to be edge-disjoint. However, it is not hard to see the minimum of $\sum_{X\in\mathcal{X}}(2|X|-3)$ can be obtained when the subgraphs induced by $X\in\mathcal{X}$ are pairwise edge-disjoint, and thus it suffices to consider the edge-disjoint case whenever needed.

A graph $G$ is {\bf redundantly rigid} if $G-e$ is rigid for all $e\in E(G)$.  Hendrickson~\cite{Hend92} showed that if a graph $G$ is globally rigid, then $G$ is 3-connected and redundantly rigid. He also conjectured that 3-connectedness and redundant rigidity are sufficient for global rigidity. This conjecture was solved by Connelly~\cite{Conn05}, Jackson and Jord\'an~\cite{JaJo05}, respectively.
\begin{thm}[Connelly~\cite{Conn05}, Jackson and Jord\'an~\cite{JaJo05}]
\label{globthm}
A graph $G$ is globally rigid if and only if $G$ is 3-connected and redundantly rigid, or $G$ is a complete graph on at most three vertices.
\end{thm}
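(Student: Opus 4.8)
This is an equivalence, and its two directions are of very different character. The ``only if'' direction is Hendrickson's and is comparatively soft; the ``if'' direction is the hard theorem, proved by combining a stress-matrix sufficient condition of Connelly with a combinatorial reduction of Jackson and Jord\'an. The complete graphs on at most three vertices are checked directly, so throughout I take $|V(G)|\ge 4$.

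\textit{Necessity.} Assume $G$ is globally rigid; then $G$ is rigid. I would first prove $G$ is $3$-connected by Hendrickson's reflection argument: if a set $S$ with $|S|\le 2$ separates $G$ into $G_1\cup G_2$ (with $V(G_1)\cap V(G_2)=S$ and each $G_i$ having a vertex outside $S$), then from a generic $(G,p)$ one obtains an equivalent framework $(G,q)$ by keeping $p$ on $V(G_1)$ and applying to $V(G_2)\setminus S$ a nontrivial isometry fixing $p(S)$ pointwise --- a reflection in a line through $p(S)$ --- and for generic $p$ this $(G,q)$ is not congruent to $(G,p)$, contradicting global rigidity (cuts of size at most $1$ are handled the same way). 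To prove $G$ is redundantly rigid, suppose $G-uv$ is not rigid for an edge $uv$; since $G$ is now known to be $3$-connected, $G-uv$ is connected, so the configuration space of $G-uv$ modulo congruence is compact. Flexing $G-uv$ from a generic $(G,p)$, the quantity $\|q(u)-q(v)\|^2$ varies analytically and nonconstantly --- nonconstantly because $G$ itself is rigid at a generic configuration --- and Hendrickson's analyticity argument, exploiting compactness of this configuration space, then yields a second configuration realizing the value $\|p(u)-p(v)\|^2$, i.e.\ a framework of $G$ equivalent but not congruent to $(G,p)$: a contradiction.

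\textit{Sufficiency.} For the converse I would induct on $|V(G)|$ within the class of $3$-connected redundantly rigid graphs. The base case is $K_4$, which is generically globally rigid because a generic planar $K_4$ carries an equilibrium self-stress whose $4\times 4$ stress matrix has rank $1=|V(K_4)|-3$, so Connelly's criterion applies: a generic framework of an $n$-vertex graph in $\mathbb{R}^2$ that admits an equilibrium stress with stress matrix of rank $n-3$ is globally rigid. The induction relies on two preservation facts. First, if $G$ is generically globally rigid then so is $G+e$ for any non-edge $e$: a generic framework of $G+e$ restricts to a generic framework of $G$, and any framework of $G+e$ equivalent to it is in particular an equivalent framework of $G$, hence congruent. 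Second, Connelly's theorem that a $1$-extension in the plane --- delete an edge $xy$, add a new degree-$3$ vertex joined to $x$, $y$, and one further vertex --- takes generically globally rigid graphs to generically globally rigid graphs; Connelly proves this by establishing his stress-matrix rank condition for the extension through a limiting argument on equilibrium stresses. The remaining and decisive input is the combinatorial theorem of Jackson and Jord\'an: every $3$-connected redundantly rigid graph other than $K_4$ admits a reduction --- an edge deletion, or an inverse $1$-extension at a degree-$3$ vertex --- to a strictly smaller $3$-connected redundantly rigid graph. Running the induction with these three inputs shows every $3$-connected redundantly rigid graph is generically globally rigid, and adjoining the small complete graphs completes the equivalence.

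I expect the two deep inputs to the sufficiency direction to be the genuine obstacles. Connelly's $1$-extension step is the analytic one: it rests on the stress-matrix machinery and a careful rank/continuity analysis of limits of equilibrium stresses. The Jackson--Jord\'an reduction is the combinatorial one: its proof calls for a detailed structural analysis of circuits in the $2$-dimensional generic rigidity matroid, Tutte-type reduction results for $3$-connected graphs, and nontrivial book-keeping to guarantee that \textit{some} reduction always preserves both $3$-connectivity and redundant rigidity. By comparison, Hendrickson's necessity direction and the edge-addition step are routine.
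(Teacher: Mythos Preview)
The paper does not prove this theorem; it is stated as a cited result attributed to Connelly and to Jackson and Jord\'an, and is used only as a black box (to pass from ``3-connected and redundantly rigid'' to ``globally rigid'' in the main theorems). There is therefore no proof in the paper to compare your proposal against.

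That said, your sketch is a faithful outline of how the result is actually established in the cited sources: Hendrickson's reflection and flexing arguments for necessity, Connelly's stress-matrix sufficient condition together with its preservation under $1$-extension, and the Jackson--Jord\'an inductive construction showing that every $3$-connected redundantly rigid graph can be built from $K_4$ by edge additions and $1$-extensions within the class. Your identification of the two hard inputs (Connelly's rank-preservation under $1$-extension, and the Jackson--Jord\'an reduction via the structure of $M$-circuits in the rigidity matroid) is accurate. For the purposes of the present paper, however, none of this is needed --- the theorem is simply quoted.
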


By way of Theorem~\ref{th1}, Lov\'asz and Yemini \cite{LoYe82} showed that every $6$-connected graph $G$ is rigid.  Their proof actually implies that $G$ is redundantly rigid. Thus, every 6-connected graph is globally rigid, by Theorem~\ref{globthm}. The 6-connectedness is best possible, and Lov\'asz and Yemini \cite{LoYe82} constructed infinitely many 5-connected non-rigid graphs.

After that, researchers tried to relax the connectivity condition. For example, Jackson and Jord\'{a}n \cite{JaJo09} proved that a simple graph G is rigid if $G$ is 6-edge-connected, $G-v$ is 4-edge-connected for every  $v\in V (G)$ and $G-\{u, v\}$ is 2-edge-connected for every $u, v\in V (G)$. Jackson, Servatius and Servatius~\cite{JaSS07} provided another connectivity condition sufficient for global rigidity.   They showed that every ``essentially 6-connected'' graph is redundantly rigid, and thus is global rigid,  where a graph $G$ is  ``essentially 6-connected'' if it satisfies:
\begin{enumerate}
\item  $G$ is 4-connected,
\item  for all pairs of subgraphs $G_1,G_2$ of $G$ such that $G=G_1\cup G_2$, $|V(G_1)-V(G_2)|\ge3$ and $|V(G_2)-V(G_1)|\ge3$, we have $|V(G_1)\cap V(G_2)|\ge5$, and
\item  for all pairs of subgraphs $G_1,G_2$ of $G$ such that $G=G_1\cup G_2$, $|V(G_1)-V(G_2)|\ge4$ and $|V(G_2)-V(G_1)|\ge4$, we have $|V(G_1)\cap V(G_2)|\ge6$.
\end{enumerate}

This connectivity allows cuts of size $4$ or $5$, which can only separate at most two or three vertices from the rest of graphs.
In fact, various ideas of ``essential connectivity'' have been used for research. 
In \cite{JaWo92}, Jackson and Wormald defined the ``essentially 4-connected'' graphs to study longest cycles, where a graph $G$ is
essentially 4-connected if $G$ is 3-connected and for every 3-cut $S$ of $G$, $G-S$ has exactly two components, one of which is a single vertex.
To study Hamiltonian claw-free graphs, Lai et al.~\cite{LSWZ06} defined another version of ``essentially $k$-connectivity''.
In their paper, a graph $G$ with at least $k+1$ vertices is {\bf essentially $k$-connected} if there is no $X\subset V(G)$ with $|X|< k$ such that at least
two components of $V-X$ are nontrivial, where a nontrivial component means it contains at least one edge.
Using this definition, Lai et al.~\cite{LSWZ06} showed that every 3-connected essentially 11-connected claw-free graph is Hamiltonian,
which extends Thomassen's conjecture on line Hamiltonian graphs.

We need to point out that the definitions in \cite{JaSS07, JaWo92, LSWZ06} are not the same.
Apparently, one main difference is that the definition of \cite{LSWZ06} requires nontrivial components for cuts of small size,
i.e., components with at least one edge. This requirement allows many vertices of low degrees in graphs with high essential connectivity.
For example, for 3-connected essentially 9-connected graphs, the definition allows this graph contains many vertices of degrees 3 and 4.
Without this requirement, the number of low degree vertices is much more restricted.
In fact, there are many graphs failing to be high connected due to vertices of low degree, and we would like to study such graphs.
Thus, in this paper, we will use essentially $k$-connected graphs defined in \cite{LSWZ06}.

\medskip
The study of rigidity becomes more involved when we work on this essential connectivity.  One may see that this essential connectivity is not monotone, that is,  we may decrease the essential connectivity when adding edges to a graph. The proofs in \cite{LoYe82} and \cite{JaSS07} utilized useful facts that each $X$ in the cover induces a clique and every vertex lies in at least two $X$'s, which rely on the benefit that adding edges does not decrease the (essential) connectivity.

Overcome this difficulty, we first extend the result of Lov\'asz and Yemini \cite{LoYe82}.
The following theorem is quite similar to the main result of \cite{JaSS07}, but they are not the same.
For instance, the result of \cite{JaSS07} fails for a graph containing three or more vertices of degree 4  with the same neighbors,
while Theorem~\ref{th4ess6} allows many such vertices.

\begin{thm} \label{th4ess6}
Every 4-connected essentially 6-connected graph is redundantly rigid, and thus is globally rigid.
\end{thm}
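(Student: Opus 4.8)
The plan is to reduce, via Theorems~\ref{globthm} and~\ref{th1}, to a counting inequality for covers, and then to run a discharging argument on a minimal counterexample, using $4$-connectivity to read cuts of $G-e$ off the cover and essential $6$-connectivity to control those cuts. Since a $4$-connected graph is $3$-connected, Theorem~\ref{globthm} reduces the statement to: $G=(V,E)$ is redundantly rigid, i.e.\ $G-e$ is rigid for every $e\in E$. By Theorem~\ref{th1} this is the assertion that every cover $\mathcal X=\{X_1,\dots,X_t\}$ of $G-e$ satisfies $w(\mathcal X):=\sum_{i=1}^{t}(2|X_i|-3)\ge 2|V|-3$. I assume not, and among all edges $e$ and all covers of $G-e$ with $w(\mathcal X)\le 2|V|-4$ I choose one minimizing $w(\mathcal X)$ and then $t$. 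Writing $d(v)$ for the number of sets of $\mathcal X$ containing $v$ and $D:=\sum_{v\in V}(d(v)-1)$, one has $\sum_i|X_i|=|V|+D$, so $w(\mathcal X)=2|V|+2D-3t$, and the hypothesis becomes $3t-2D\ge4$; the whole proof consists of showing instead that $3t-2D\le 3$.

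The next step combines the standard minimality reductions with a cut-translation. Minimality forces each $G[X_i]$ to be connected with no isolated vertex, $|X_i|\ge2$, no containment $X_i\subseteq X_j$, and $|X_i\cap X_j|\le1$ for $i\ne j$ (merging two sets meeting in at least two vertices strictly lowers $w$); also $t\ge2$, so $V\setminus X_i\ne\emptyset$ for every $i$. Put $A_i:=X_i\cap\bigcup_{j\ne i}X_j$, the set of vertices of $X_i$ shared with the rest of the cover. The crucial observation is that $A_i$ is a vertex cut of $G-e$ separating $X_i\setminus A_i$ from $V\setminus X_i$, since an edge of $G-e$ joining those two sets would have to lie in some $G[X_m]$, which is impossible. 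As $G-e$ is $3$-connected, for each $i$ either $X_i\setminus A_i=\emptyset$ — i.e.\ $X_i$ is a single edge both of whose endpoints lie in other sets, which I call a \emph{thin} set — or $|A_i|\ge 3$. Let $T_2,T_3$ partition the indices into thin and non-thin. Then $\sum_i|A_i|\ge 2|T_2|+3|T_3|=3t-|T_2|$, whereas $\sum_i|A_i|=\sum_{v:\,d(v)\ge 2}d(v)\le 2D$; hence $3t-2D\le |T_2|$, and the argument is already complete whenever there are at most three thin sets.

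The remaining, and main, difficulty is a counterexample with at least four thin sets; here essential $6$-connectivity must be used, replacing the clique-completion shortcut of \cite{LoYe82,JaSS07}, which is unavailable because essential connectivity in the sense of \cite{LSWZ06} is not monotone under edge addition — so a non-thin $X_i$ may be sparse and carry weight $2|X_i|-3$ much larger than $|E(G[X_i])|$. For a non-thin $X_i$ with $|A_i|\le4$, the cut $A_i$ of $G-e$ yields a cut of $G$ of order at most $|A_i|+1\le5$, so essential $6$-connectivity allows at most one of its two sides to be nontrivial; a case analysis on which side that is — using $4$-connectivity of $G$ and, where needed, local modifications of $\mathcal X$ that do not increase $w$ — pins down the structure of $X_i$ (in the principal case $X_i\setminus A_i$ is an independent set, each vertex of which sends at least three edges into the small set $A_i$). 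With these structural facts I would then assign each set its charge $2|X_i|-3$ and discharge it to its vertices — essentially $2$ to each private vertex and $1$ to each shared vertex of a non-thin set (affordable precisely because $|A_i|\ge3$), with thin sets and their endpoints receiving a tailored treatment — and show, accounting for the surpluses of the larger sets and at most $3$ units of global slack, that every vertex can be brought up to charge $2$, whence $w(\mathcal X)\ge 2|V|-3$, a contradiction. The delicate bookkeeping — the ``$+1$'' that the deleted edge $e$ adds to every relevant cut, the thin sets, and the vertices of degree $4$ or $5$, all of which clique completion would have absorbed automatically — is where the real work lies.
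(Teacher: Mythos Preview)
Your reduction and the first two paragraphs are sound, and the framework is the same as the paper's (discharging on a weight-minimal cover). The observation that $A_i$ is a vertex cut of $G-e$ is clean, and the bound $3t-2D\le|T_2|$ is correct. One small slip: $X_i\setminus A_i=\emptyset$ does not by itself force $|X_i|=2$; what you want is to define ``thin'' as $|A_i|\le 2$, which then (via the cut argument) forces $X_i=A_i$ and hence $|X_i|=2$. This is cosmetic.

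The genuine gap is the third paragraph, which is only an outline. The coarse rule ``$2$ to each private vertex, $1$ to each shared vertex of a non-thin set, $\tfrac12$ from each thin set'' does not suffice: a vertex lying in exactly one thin set and exactly one non-thin set receives $\tfrac12+1=\tfrac32<2$, and covers with many thin sets produce exactly such vertices. The paper resolves this by refining the rule at precisely this point: in a set $X$ with $|X|\ge 5$, a vertex $x$ whose sole outside neighbour $x'$ satisfies $\{x,x'\}\in\mathcal X$ receives $\tfrac32$ from $X$ rather than $1$ (with a matching tweak for $|X|=4$). With these refined rules one proves that any large $X_0$ with negative leftover charge satisfies $t_1+2t_2\le 5$, where $t_1$ counts vertices of $X_0$ with a single outside neighbour reached via a thin set and $t_2$ the remaining boundary vertices. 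Essential $6$-connectivity is then used, not on $A_i$ directly, but on cuts assembled from $V_1'\cup V_2$ and small modifications thereof, to force $V(G)\setminus X_0$ to be independent; together with $4$-connectivity this yields $|V(G)\setminus X_0|\le 1$, whence $\mu'(X_0)\ge -1$ while every other set has $\mu'(X)\ge 0$, and the final count closes. Your sketch names none of these steps concretely, and ``tailored treatment'' and ``case analysis\dots pins down the structure'' hide exactly the content that makes the discharging balance.
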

The 5-connected non-rigid graphs constructed by Lov\'asz and Yemini \cite{LoYe82} show that the essential 6-connectedness in Theorem~\ref{th4ess6} is also best possible.
\medskip

Lov\'asz and Yemini \cite{LoYe82} showed that every 6-connected graph is redundantly rigid, which implies that the graph is also globally rigid. It is natural to ask, if the connectivity of a graph is less than 6, what condition can make this graph to be redundantly rigid, and in addition, global rigid?  It was answered by \cite{JaSS07} and Theorem~\ref{th4ess6} for 4-connected graphs. We provide an answer for 3-connected graphs  by giving the optimal essential connectivity.  Recall that 3-connectedness is a necessary condition for global rigidity.

\begin{thm} \label{th3ess9}
Every 3-connected essentially 9-connected graph is redundantly rigid, and thus is globally rigid.
\end{thm}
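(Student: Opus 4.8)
The plan is to argue by contradiction through the Lov\'asz--Yemini criterion (Theorem~\ref{th1}), mirroring the discharging strategy behind Theorem~\ref{th4ess6}. Suppose $G$ is $3$-connected and essentially $9$-connected but $G-e$ is not rigid for some $e=uv\in E(G)$. By Theorem~\ref{th1} there is a cover $\mathcal{X}$ of $G-e$ with $\sum_{X\in\mathcal{X}}(2|X|-3)\le 2|V(G)|-4$; fix one that is extremal in the order: first with $\sum_{X\in\mathcal{X}}(2|X|-3)$ minimum, then with $|\mathcal{X}|$ minimum, then with $\sum_{X\in\mathcal{X}}|X|$ minimum. Once a contradiction is reached, $G$ is redundantly rigid, and, being $3$-connected, globally rigid by Theorem~\ref{globthm}.

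First I would normalize $\mathcal{X}$. As observed after Theorem~\ref{th1}, we may take the subgraphs $G[X]$ pairwise edge-disjoint, and then fix an edge-partition $\{E_X\}_{X\in\mathcal{X}}$ of $E(G-e)$ with $X=V(E_X)$; minimality of the weight forces each $(X,E_X)$ to be connected, and in fact $2$-connected unless it is a single edge (splitting a disconnected piece, or splitting at a cut vertex, strictly lowers the weight), forces no piece to contain another, and forces every vertex of a piece to be incident with an edge of that piece. Since $G$ is $3$-connected, $G-e$ is connected, so if $|\mathcal{X}|=1$ the unique piece is $V(G)$, giving weight $2|V(G)|-3$, a contradiction; hence $t:=|\mathcal{X}|\ge 2$. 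Moreover the intersection graph of $\mathcal{X}$ (a vertex per piece, an edge per intersecting pair) is connected, since otherwise $V(G)$ would split into two parts with no edge between them.

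Next I would recast the goal as an excess inequality. With $d_{\mathcal{X}}(v)=|\{X\in\mathcal{X}:v\in X\}|$ and $\mathrm{ex}(v)=d_{\mathcal{X}}(v)-1\ge 0$, one has $\sum_{X\in\mathcal{X}}(2|X|-3)=2\sum_v d_{\mathcal{X}}(v)-3t=2\sum_v\mathrm{ex}(v)+2|V(G)|-3t$, so the desired contradiction $\sum_{X}(2|X|-3)\ge 2|V(G)|-3$ is equivalent to
\[
\sum_{v\in V(G)}\mathrm{ex}(v)\ \ge\ \tfrac{3}{2}(t-1).
\]
Ordering the pieces as $X_1,\dots,X_t$ along a DFS of a spanning tree $T$ of the intersection graph, one has the exact identity $\sum_v\mathrm{ex}(v)=\sum_{j=2}^{t}|S_j|$ with $S_j=X_j\cap(X_1\cup\cdots\cup X_{j-1})\ne\emptyset$, so it suffices to show $\sum_{j\ge2}|S_j|\ge\tfrac32(t-1)$, i.e.\ that on average each ``merge'' $S_j$ has at least $\tfrac32$ vertices. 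The discharging then runs as follows: to the subtree $D_j$ of $T$ rooted at $X_j$ associate the vertex set $C_j$ separating $\bigcup_{X\in D_j}X$ from $\bigcup_{X\notin D_j}X$ (a separator, because the $E_X$ partition $E(G-e)$). When $C_j$ genuinely separates two nonempty parts, $3$-connectivity gives $|C_j|\ge 3$, a comfortable surplus over $\tfrac32$; the deficient merges are exactly those with $|S_j|\le 2$, and here essential $9$-connectivity enters: a small separator forces the cheaper side to carry no edge, which is precisely a single-edge piece or a low-degree (degree $3$ or $4$) vertex absorbed into a larger piece. One shows such cheap pieces and absorbed low-degree vertices cannot accumulate — a cluster of them exposes a vertex cut of size at most $8$ with two nontrivial sides, contradicting essential $9$-connectivity — and a global count then shows the surpluses from the honest $3$-separators cover the bounded deficits; this accounting is exactly the threshold that makes the essential $9$-connectedness best possible.

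The main obstacle is this last, delicate part of the discharging. In the $6$-connected setting of Lov\'asz--Yemini, and in the essentially $6$-connected setting of Jackson--Servatius--Servatius, one may add edges freely without harming (essential) connectivity, hence assume every piece is a clique and every vertex lies in at least two pieces; here the essential connectivity is non-monotone, so no such simplification is available, and one must argue directly with the $2$-connected pieces, keeping careful track of which vertices are cut vertices of the global structure (and of the effect of deleting $e$ on the degrees of $u$ and $v$). Making the per-merge count tight enough to land on exactly $9$ — accommodating the extremal examples while excluding every essentially $9$-connected graph — requires a careful case analysis, and that is the heart of the proof; the reduction and reformulation above are routine packaging.
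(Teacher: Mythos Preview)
Your high-level plan---contradict the Lov\'asz--Yemini criterion by a discharging argument on an extremal cover of $G-e$---is indeed the paper's plan. But from that point on your execution diverges substantially from the paper's, and the divergence is where the real difficulty lies.

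The paper does \emph{not} organize the discharging via a spanning tree of the intersection graph and separators $C_j$. Instead it adjoins $\{u,v\}$ to the cover (so as to work with a cover of $G$), gives each piece $X$ initial charge $2|X|-3$, and then pushes charge to vertices by a long list of explicit rules (R1)--(R4C3) that depend on $|X|$, on how many neighbours a vertex has outside $X$, and on whether those outside neighbours have degree~$3$. The heart of the argument is showing (i) every vertex receives charge $\ge 2$, and (ii) at most one large piece $X_0$ can end with negative residual charge, and then only $\ge -2$, which is absorbed because $|V(G)\setminus X_0|\le 2$. Step~(ii) is carried by the inequality $2t_1+3t_2+4t_3+5t_4+6t_5\le 17$ and a sequence of claims forcing $V(G)\setminus X_0$ to be independent. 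None of this structure is visible in your tree/separator scheme; in particular, your quantities $S_j$ and $C_j$ are different objects (the former is an overlap with all earlier pieces, the latter a global separator of a subtree), and you never say how a lower bound on $|C_j|$ translates into one on $|S_j|$. Your assertion that ``$3$-connectivity gives $|C_j|\ge 3$'' is also not quite right as stated, since $C_j$ is only a separator of $G-e$; the edge $e$ can bridge the two sides.

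More importantly, the paper's discharging rules cannot even be stated without a preliminary structural lemma that your outline has no analogue of: in a minimal counterexample, \emph{every vertex of degree~$3$ is adjacent only to vertices of degree $\ge 6$} (Claim~3.1). This is proved not by discharging but by a lengthy direct construction of spanning Laman subgraphs of $G-e$ whenever a $3$-vertex has a $5^-$-neighbour, using the essential $9$-connectedness to pin down the local picture. This lemma is what makes the asymmetric rules (R1A), (R2A), (R4B1), (R4C1), (R4C2) balance; without it the vertex lower bound $\mu(x)\ge 2$ fails. Your sketch gestures at ``cheap pieces cannot accumulate,'' but gives no mechanism comparable to Claim~3.1 for controlling degree-$3$ vertices, and it is precisely these vertices that drive the threshold up to~$9$. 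As written, the proposal identifies the right framework but leaves the genuinely hard step---the fine structural control near low-degree vertices---entirely unaddressed.
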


The proofs of Theorems~\ref{th4ess6} and ~\ref{th3ess9}  utilize discharging arguments, which does not appear explicitly in the literature, as far as we know. Note that Theorems~\ref{th3ess9} and \ref{th4ess6} are also true for multigraphs, since the removal of any multiple edges does not affect connectivity or essential connectivity.

The paper is organized as below.  We prove Theorems~\ref{th4ess6} and ~\ref{th3ess9} respectively in the next two sections. In the last section, for every $3\le t\le 8$, we present examples of $3$-connected essentially $t$-connected non-rigid graphs.

For a vertex $v\in V(G)$, let $d(v)$ and $N(v)$ denote the degree of $v$ and the set of vertices adjacent to $v$, respectively.  A vertex $v$ is called a $k$-vertex, $k^{+}$-vertex and $k^{-}$-vertex, if $d(v)=k$, $d(v)\geq k$ and $d(v)\leq k$, respectively. For $X, Y\subset V(G)$ with $X\cap Y =\emptyset$, let $E[X, Y]$ denote the set of edges in $G$ with one end in $X$ and the other in $Y$ and $e(X, Y)=|E[X,Y]|$.


\section{The proof of Theorem \ref{th4ess6}}

Suppose to the contrary that the statement is not true. Let $G=(V, E)$ be a counterexample such that $|V(G)|$ is as small as possible, and subject to this condition, $|E(G)|$ is as large as possible.
Thus $G$ is not redundantly rigid, and there exists an edge $e=uv$ such that $T=G-e$ is not rigid.
By Theorem~\ref{th1}, there exists a cover $\mathcal{T}=\{Y_1,Y_2,...,Y_t\}$ such that $\sum_{i=1}^t (2|Y_i|-3) < 2|V|-3$.
Without loss of generality, we may assume that $\mathcal{T}=\{Y_1,Y_2,...,Y_t\}$ is a cover of $T$ to minimize $\sum_{i=1}^t (2|Y_i|-3)$.
Let $m=t+1$, $X_i = Y_i$ for $i=1,2, \cdots, t$ and $X_m = X_{t+1} = \{u, v\}$.
Let $\mathcal{X}=\{X_1,X_2,\ldots, X_m\}$.  Then $\mathcal{X}$ is a cover of $G$.  For each $X\in \mathcal{X}$, let $\mu(X)=2|X|-3$.
We have
\begin{equation}
\label{46mu}
\sum_{i=1}^{m} \mu(X_i)=\sum_{i=1}^{m}(2|X_i|-3) = \sum_{i=1}^{t}(2|Y_i|-3) + (2|X_m| -3) < 2|V| -2.
\end{equation}

Let $\mathcal{X}(x)=\{X_i\in\mathcal{X}: x\in X_i\}$ for $x\in V(G)$. For $X\in \mathcal{X}(x)$, we denote by $\sigma_x(X)$ the value that $x$ gets from $X$. We use the following discharging rules.
	
\begin{enumerate}[(R1)]
\item If $|X|=2$, then $\sigma_{x}(X)=\frac12$ for each $x\in X$.
\item If $|X|=3$, then $\sigma_{x}(X)=1$ for each $x\in X$.
\item Suppose $|X|=4$.  If $X$ contains a 4-vertex $x$ such that $\mathcal{X}(x)=\{X_1,X_2\}$ with $|X_1|=2,|X_2|=4$, then $\sigma_{x}(X)=\frac{3}{2}$ and each of other vertices gets 1 from $X$. Otherwise, each vertex of $X$ gets $\frac54$ from $X$.

\item Suppose $|X|\geq5$. 
\begin{enumerate}
\item [(R4A)] If $N(x)\cup \{x\}\subseteq X$, then $\sigma_{x}(X)=2$. Let $V_0 (X)$ be the set of all those vertices $x$ in $X$, and let $|V_0(X)|=t_0(X)$.
\item[(R4B)] If $x\in X$ has only one neighbor $x'\notin X$ and there is an $X'=\{x,x'\}\in \mathcal{X}$, then $\sigma_{x}(X)=\frac32$. Let $V_1(X)$ be the set of all those vertices $x$ in $X$, and denote $|V_1(X)|=t_1(X)$. Let $V'_1(X)=\{x'\in N(x)\backslash X: x\in V_1(X)\}$.
\item[(R4C)] Otherwise, $\sigma_{x}(X)=1$. Let $V_2 (X)$ be the set of all those vertices $x$ in $X$, and denote $|V_2(X)|=t_2(X)$. Let $V'_2(X)=\{x'\in N(x)\backslash X: x\in V_2(X)\}$.
\end{enumerate}
\end{enumerate}

For $x\in V(G)$, let $\mu(x)=\sum_{X\in \mathcal{X}(x)}\sigma_x(X)$, i.e., the total charge that $x$ gets from all $X\in\mathcal{X}(x)$.

\begin{claim}\label{46c1}
$\mu(x)\geq2$, for every vertex $x\in V(G)$.
\end{claim}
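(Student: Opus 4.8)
The plan is to verify the inequality $\mu(x) \geq 2$ by a case analysis on the size of the sets $X \in \mathcal{X}(x)$ containing $x$, exploiting the 4-connectivity of $G$ together with the minimality of the cover $\mathcal{T}$ of $T = G - e$. The first observation is that since $\mathcal{X}$ is a cover and every vertex of $G$ has degree at least $4$ (as $G$ is 4-connected), each vertex $x$ lies in at least one set $X$ with $|X| \geq 3$, unless all its incident edges are covered by $2$-element sets. I would first handle the \emph{easy cases}: if $x$ lies in some $X$ with $|X| \geq 5$, then by rules (R4A)--(R4C) $x$ receives at least $1$ from that set, and I must find an additional $1$; if $x \in V_0(X)$ it already gets $2$ and we are done, so assume $x$ gets only $1$ or $\tfrac32$ from the big set. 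Similarly if $x$ lies in a set of size $3$ it gets $1$, and if it lies in a set of size $4$ it gets at least $1$ (and $\tfrac32$ in the special configuration of (R3)).

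The core of the argument is a \textbf{counting of the edges at $x$}. Since $\mathcal{X}$ covers $E(G)$, the $d(x) \geq 4$ edges incident to $x$ are distributed among the sets in $\mathcal{X}(x)$: a set $X$ of size $2$ covers exactly one edge at $x$, a set of size $3$ covers at most two, and a set of size $s$ covers at most $s-1$. I would argue that if $x$ is in only one non-trivial set $X$ (size $\geq 3$), then that set must cover all $\geq 4$ edges at $x$, forcing $|X| \geq 5$ with $N(x) \cup \{x\} \subseteq X$, i.e.\ $x \in V_0(X)$, giving $\mu(x) = 2$. Otherwise $x$ lies in at least two non-trivial sets, or in one non-trivial set plus enough $2$-sets; in the former case $x$ collects at least $1 + 1 = 2$ (with care when one of them is a size-$4$ set in the special case, where the rule was precisely designed so the ``$\tfrac32$'' vertex has $\mathcal{X}(x) = \{X_1, X_2\}$ with $|X_1| = 2$, contributing $\tfrac12$, for a total of $2$), and in the latter case each $2$-set contributes $\tfrac12$, so if $x$ is in a non-trivial set giving $c \in \{1, \tfrac32\}$ and in $k$ many $2$-sets, I need $c + k/2 \geq 2$; I would check that the number of incident edges not covered by the non-trivial set forces $k$ to be large enough. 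The configurations of (R3) and (R4B) were evidently set up so the vertex getting the reduced charge $\tfrac32$ from the big/size-$4$ set also sits in exactly one $2$-set and nothing else, making the bound tight.

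The step I expect to be the \textbf{main obstacle} is ruling out, or correctly accounting for, the case where $x$ lies in a size-$4$ set $X$ in the ``otherwise'' branch of (R3) (so $x$ gets $\tfrac54$) but in no other non-trivial set and in at most one $2$-set: then $\mu(x)$ could be as low as $\tfrac54 + \tfrac12 = \tfrac74 < 2$. To exclude this I would have to show that such an $x$ has another edge forcing it into a further set of $\mathcal{X}$; this is where the minimality of $\mathcal{T}$ (replacing a size-$4$ set by smaller ones when a vertex is ``loosely attached'') and the precise phrasing of (R3) must interact. Concretely, if $x$ is a $4$-vertex in a size-$4$ set $X$ with all four neighbors elsewhere accounted for, then two of its edges leave $X$ and must be covered by other sets, contradicting ``$x$ is in no other non-trivial set and at most one $2$-set.'' I would also need to double-check the boundary case $d(x) = 4$ throughout, since that is where the inequalities are tightest. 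Once these configurations are pinned down, summing the rule contributions in each remaining case gives $\mu(x) \geq 2$ directly.
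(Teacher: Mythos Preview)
Your approach matches the paper's: a case analysis on $|\mathcal{X}(x)|$ and the sizes of its members, using that a set of size $s$ accounts for at most $s-1$ of the $d(x)\ge4$ edges at $x$; the paper simply organizes it by first splitting on $d(x)=4$ versus $d(x)\ge5$ and then on $|\mathcal{X}(x)|$. Your worried-about ``obstacle'' is not one: if $\mathcal{X}(x)=\{X_1,X_2\}$ with $|X_1|=2$ and $|X_2|=4$ then necessarily $d(x)\le 3+1=4$, so $x$ is precisely the special $4$-vertex in (R3) and receives $\tfrac32$ (not $\tfrac54$)---the ``otherwise'' branch of (R3) cannot apply to this $x$, and your alternative line ``two of its edges leave $X$'' is both incorrect in this configuration (only one edge leaves) and unnecessary.
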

\begin{proof}
First let $d(x)=4$. If $|\mathcal{X}(x)|=4$, then $x$ is in four sets of size at least two, thus $\mu(x)\geq\frac12\cdot 4=2$ by (R1)-(R4).   If $|\mathcal{X}(x)|=3$, then $x$ is in at least one set of size at least three, thus $\mu(x)\geq\frac12\cdot 2+1=2$ by (R1)-(R4).
If $|\mathcal{X}(x)|=2$, then $x$ is either in two sets of size at least three, thus by (R2)-(R4), $\mu(x)\geq1+1=2$,  or $x$ is in a set of size two and in a set of size at least four, thus by (R1), (R3), and (R4B), $\mu(x)\geq\frac12+\min\{2,\frac32\}=2$.

Let $d(x)\geq5$. By (R4A), we may assume that $|\mathcal{X}(x)|\geq2$. If $|\mathcal{X}(x)|\geq4$, then $x$ is in four sets of size at least two, thus $\mu(x)\geq\frac12\cdot 4=2$ by (R1)-(R4). If $x$ is in at least two sets of size at least three, then $\mu(x)\geq1+1=2$ by (R2)-(R4). So we may assume that $x$ is either in a set of size two and a set of size at least five, or in two sets of size two and one set of size at least four. Thus, in the former case, by (R1) and (R4B), $\mu(x)=\frac12+\frac32=2$. In the latter case, by (R1) and (R3), $\mu(x)=\frac12\cdot 2+1=2$.
\end{proof}

Now we will try to prove that for each $X\in \mathcal{X}$,  the final charge $\mu'(X)\ge 0$.

\begin{claim}
For each $X\in \mathcal{X}$ with $|X|\le 4$, $\mu'(X)\ge 0$.
\end{claim}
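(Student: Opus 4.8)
The plan is to go through the cases $|X|\in\{2,3,4\}$ and verify in each that the charge $X$ sends out under (R1)--(R4) is at most $\mu(X)=2|X|-3$. For $|X|=2$, rule (R1) sends out $2\cdot\frac12=1=\mu(X)$, and for $|X|=3$, rule (R2) sends out $3\cdot 1=3=\mu(X)$, so $\mu'(X)=0$ in both. For $|X|=4$ we have $\mu(X)=5$, and if the ``otherwise'' clause of (R3) applies then $X$ sends out $4\cdot\frac54=5$, so again $\mu'(X)=0$. Hence the only substantive case is the first clause of (R3), where $X$ contains at least one \emph{special} $4$-vertex, namely a vertex $x$ with $d(x)=4$ and $\mathcal{X}(x)=\{X',X\}$ for some $X'$ with $|X'|=2$; each such $x$ gets $\frac32$ from $X$ and every other vertex of $X$ gets $1$, so if $X$ has $a$ special $4$-vertices it sends out $\frac32a+(4-a)=4+\frac a2$ and $\mu'(X)=1-\frac a2$. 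Therefore the whole claim comes down to showing that a $4$-set has at most two special $4$-vertices.

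First I would record the local picture forced by a special $4$-vertex $x_i\in X$: writing $\mathcal{X}(x_i)=\{X,\{x_i,p_i\}\}$, the fact that $X$ and $\{x_i,p_i\}$ together carry all four edges at $x_i$, combined with minimality of the cover, gives $x_ip_i\in E(G)$, $p_i\notin X$, and $N_G(x_i)=(X\setminus\{x_i\})\cup\{p_i\}$. Now suppose for contradiction that $X=\{x_1,x_2,x_3,w\}$ has $x_1,x_2,x_3$ all special. The first step is to show $p_1,p_2,p_3$ are pairwise distinct: if $p_i=p_j$, then $N_G(\{x_i,x_j\})\subseteq(X\setminus\{x_i,x_j\})\cup\{p_i\}$ is a set of size $3$ whose removal isolates the edge $x_ix_j$, which is impossible since $G$ is $4$-connected, unless $|V(G)|=5$; but then $G=K_5$, which is redundantly rigid, contradicting the choice of $G$.

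So $p_1,p_2,p_3$ are distinct. Put $A=\{x_1,x_2,x_3\}$, which spans a triangle, and $B=\{w,p_1,p_2,p_3\}$, so $N_G(A)\subseteq B$ and $|B|=4$. If $V(G)=A\cup B$, then $|V(G)|=7$ and, since $N_G(p_i)\subseteq\{x_i,w,p_j,p_k\}$ while $d(p_i)\ge 4$, we get $N_G(p_i)=\{x_i,w,p_j,p_k\}$; hence $G$ is exactly two copies of $K_4$, one on $X$ and one on $B$, sharing only the vertex $w$, together with the matching $x_1p_1,x_2p_2,x_3p_3$. I would then check that this graph is redundantly rigid — after deleting any single edge it is still two rigid $K_4$-bodies meeting at $w$ and joined by at least one further bar — again contradicting the choice of $G$. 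Otherwise some $c\in V(G)\setminus(A\cup B)$ exists; since $N_G(c)\subseteq B$ and $d(c)\ge 4=|B|$, we have $N_G(c)=B$, so $c\sim p_3$. Taking $S=\{x_3,w,p_1,p_2\}$, in $G-S$ the pair $\{x_1,x_2\}$ is a component containing the edge $x_1x_2$, while $\{p_3\}$ together with all of $V(G)\setminus(A\cup B)$ is a connected subgraph containing the edge $p_3c$; these are two disjoint nontrivial components of $G-S$ with $|S|=4<6$, contradicting that $G$ is essentially $6$-connected. This rules out $a\ge 3$, so $\mu'(X)\ge 0$. I expect this last paragraph to be the only real difficulty: when $a=3$, $4$-connectivity alone gives nothing (the natural separator $B$ has size $4$), so the argument must extract essential $6$-connectivity from the hand-picked separator $S$, and must also dispose of the degenerate graphs on $5$ and $7$ vertices by recognizing them as explicit redundantly rigid graphs.
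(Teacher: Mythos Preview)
Your outline for $|X|\le 3$ and the ``otherwise'' clause of $|X|=4$ is fine, and your reduction of the remaining case to showing that $X$ has at most two special $4$-vertices matches the paper exactly. Your explicit handling of the distinctness of $p_1,p_2,p_3$ and of the degenerate $5$- and $7$-vertex graphs is more detailed than the paper's (the paper dispatches $|V(G)|=7$ by a counting argument on $\sum_{X\in\mathcal X}\mu(X)$ rather than a direct rigidity check), and those parts are correct.

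There is, however, a real gap in your general case $V(G)\neq A\cup B$. You assert $N_G(c)\subseteq B$ for $c\in V(G)\setminus(A\cup B)$, but this is unjustified: nothing prevents $c$ from having neighbors inside $V(G)\setminus(A\cup B)$ itself. If that set spans an edge, your inference $N_G(c)=B$ (and hence $c\sim p_3$) is simply false. The paper fills exactly this hole by inserting a step you omit: first observe that $B=\{w,p_1,p_2,p_3\}$ separates the triangle $A$ from $V(G)\setminus(A\cup B)$, so if the latter contained an edge then $B$ would already be an essential $4$-cut, a contradiction. Only after establishing that $V(G)\setminus(A\cup B)$ is independent does the paper take (the analogue of) your $S=\{x_3,w,p_1,p_2\}$ and finish. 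With that preliminary step in place, your claim $N_G(c)\subseteq B$ becomes valid and the rest of your argument goes through unchanged.
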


\begin{proof}
If $|X|=2$, then $\mu'(X)=2\cdot2-3-\frac12\cdot2=0$ by (R1).
If $|X|=3$, then $\mu'(X)=2\cdot3-3-1\cdot3=0$ by (R2).
If $|X|=4$, then $\mu(X)=5$. We will show that $X$ contains at most two 4-vertices such that $\mathcal{X}(x)=\{X_1,X_2\}$ with $|X_1|=2,|X_2|=4$. Thus, $\mu'(X)\geq 5-\max\{\frac32\cdot2+1\cdot2,\frac54\cdot4\}=0$.

Suppose to the contrary that $X=\{v_i: i\in[4]\}$ contains three such vertices $v_1,v_2,v_3$ and let $u_i\notin X$ denote the neighbor of $v_i$, for $i=1,2,3$. Note that $X$ induces a clique. If $V(G)-X-\{u_1,u_2,u_3\}$ induces at least one edge $e'$, then $S=\{u_1,u_2,u_3,v_4\}$ separates $\{v_1,v_2,v_3\}$ and $e'$. We obtain an essential 4-cut $S$, a contradiction. Thus we may assume that $V(G)-X-\{u_1,u_2,u_3\}$ is an independent set or an empty set. In the former case, since $G$ is 4-connected, every vertex in $V(G)-X-\{u_1,u_2,u_3\}$ is adjacent to every vertex in $\{u_1, u_2, u_3, v_4\}$. Now $S'=\{v_1,v_4,u_2,u_3\}$ separates $\{v_2,v_3\}$ from $V(G)-X-\{u_2,u_3\}$, and is an essential 4-cut, a contradiction. In the latter case, $V(G)=X\cup\{u_1,u_2,u_3\}$ with $|V(G)|=7$ and $|E(G)|\geq \frac{4\cdot7}{2}=14$. Since $X$ covers 6 edges and $X_i=\{v_i,u_i\}$ for $i\in[3]$ covers one edge, there are more sets in $\mathcal{X}$ covering the other $|E(G)|-9\ge 5$ edges. It follows that $\sum_{X\in\mathcal{X}}\mu(X)\geq2\cdot4-3+(2\cdot2-3)\cdot3+5>2|V(G)|-2$, a contradiction.
\end{proof}

Assume for a contradiction that for some $X_0\in \mathcal{X}$ with $|X_0|\ge 5$, $\mu'(X_0)<0$.   We will simply use $V_i$ and $t_i$ to denote $V_i(X_0)$ and $t_i(X_0)$ for $i=0,1,2$, respectively. Note that $|X_0|=t_0+t_1+t_2$.

\begin{claim}\label{46c2}
$t_1+2t_2\leq5$.
\end{claim}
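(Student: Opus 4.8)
The plan is to show $t_1 + 2t_2 \le 5$ by extracting a small essential cut from $X_0$ whenever this quantity is too large, using the structure of $V_1(X_0)$ and $V_2(X_0)$ together with the minimality of the cover $\mathcal{X}$. Recall that the vertices in $V_1 = V_1(X_0)$ each have exactly one neighbor outside $X_0$ (and that neighbor is itself a size-2 member of $\mathcal{X}$), and the vertices in $V_2 = V_2(X_0)$ have at least one neighbor outside $X_0$. The idea is that the neighbors outside $X_0$ of the vertices of $V_1 \cup V_2$ form a set $W$ that, together with $X_0 \setminus (V_1 \cup V_2) = V_0$, should be a candidate cut separating $V_1 \cup V_2$ (which contains at least one edge, as long as $|V_1\cup V_2|\ge 2$ and $X_0$ induces a clique-like structure — here one first argues each $X$ in the cover can be assumed to induce a clique, or else replaces $X_0$ appropriately) from the rest of $G$.

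Concretely, first I would record that each $x \in V_1$ contributes exactly one vertex to $V_1'(X_0)$, and each $x \in V_2$ contributes at least one vertex to $V_2'(X_0)$; moreover since $G$ is $4$-connected with $|X_0| \ge 5$ and the part $V(G) \setminus X_0$ is nonempty (otherwise $X_0$ covers all of $E(G)$ and a direct count of $\sum_X \mu(X)$ against $2|V|-2$ gives a contradiction), the cut separating $V_1 \cup V_2$ from $V(G) \setminus (X_0 \cup W)$ has size at most $t_0 + |W| \le t_0 + (t_1 + t_2')$ where $t_2' = |V_2'(X_0)|$. I would then suppose $t_1 + 2t_2 \ge 6$ for contradiction and aim to produce an essential cut of size at most $8$ (contradicting essential $9$-connectedness) or at most $5$ separating at least two nontrivial components — wait, here the relevant bound for Theorem~\ref{th4ess6} is $4$-connected essentially $6$-connected, so the target is an essential cut of size at most $5$, i.e., a cut of size $< 6$ with two nontrivial components, which is forbidden. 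Since $V_1 \cup V_2$ together with the edges of $X_0$ inside it forms one nontrivial side, and $V(G)\setminus(X_0\cup W)$ must be arranged to be nontrivial, the count $|V_0| + |W|$ needs to be shown $\le 5$, and it is here that the inequality $t_1 + 2t_2 \le 5$ emerges — each $V_2$-vertex may cost up to two outside-neighbors while each $V_1$-vertex costs exactly one, so the separator size is governed by roughly $t_0 + t_1 + 2t_2$, and this forces $t_1 + 2t_2 \le 5$ once one also uses $t_0 \ge 0$ and handles the degenerate cases.

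The main obstacle, and the step I would spend the most care on, is controlling the \emph{non}-triviality of the far side $V(G) \setminus (X_0 \cup W)$ and pinning down exactly which vertices must lie in the separator. In particular: (i) when $V_2'(X_0)$ overlaps $X_0$ or overlaps $V_1'(X_0)$, the naive count overestimates the cut, so one gets a \emph{better} bound and the inequality is easier; (ii) when the far side is trivial (an independent set or empty), one cannot cite essential connectivity directly and must instead run an edge-counting argument — using $4$-connectedness to force the isolated outside vertices to have all their neighbors in $W \cup V_0$, and then deriving $\sum_{X \in \mathcal{X}} \mu(X) \ge 2|V|-2$, contradicting \eqref{46mu}; (iii) one must verify that the side $V_1 \cup V_2$ really does induce an edge, which follows if $t_1 + t_2 \ge 2$ and $X_0$ is taken so that $G[X_0]$ contains a spanning connected (indeed dense) subgraph, or more carefully by noting that a cover member with $|X_0|\ge 5$ contributing $\mu'(X_0) < 0$ must be ``almost complete'' — this last point may require first establishing, as is standard in such arguments and implicit in the remark after Theorem~\ref{th1}, that we may assume each $X_i$ induces a clique. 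I would organize the proof as: assume $t_1 + 2t_2 \ge 6$; split into the ``far side nontrivial'' case (produce the forbidden essential $5$-cut) and the ``far side trivial'' case (edge count against \eqref{46mu}); in each, drive out a contradiction; conclude $t_1 + 2t_2 \le 5$.
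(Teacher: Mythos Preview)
Your proposal misses the key idea entirely. The claim is stated \emph{under the standing assumption} (introduced just before Claim~\ref{46c2}) that $X_0$ is a set with $|X_0|\ge 5$ and $\mu'(X_0)<0$. Given that assumption, the proof is a one-line computation: since $|X_0|=t_0+t_1+t_2$ and by (R4) each vertex of $V_0,V_1,V_2$ receives $2,\tfrac32,1$ respectively, one has
\[
\mu'(X_0)=2|X_0|-3-2t_0-\tfrac32 t_1-t_2=\tfrac12(t_1+2t_2)-3,
\]
so $\mu'(X_0)<0$ forces $t_1+2t_2\le 5$. That is the whole argument.

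Your structural/cut approach cannot succeed here, because the inequality $t_1+2t_2\le 5$ is \emph{not} a property of an arbitrary $X\in\mathcal{X}$ with $|X|\ge 5$; it is only a consequence of $\mu'(X_0)<0$. For instance, nothing rules out some $X\in\mathcal{X}$ with $t_0=0$, $t_1=10$, $t_2=0$: then $t_1+2t_2=10$, but $\mu'(X)=2>0$, so such an $X$ is simply not the $X_0$ under discussion. Your plan to manufacture an essential $5$-cut out of $V_0\cup W$ also stalls because you have no control over $t_0$; the separator size ``$t_0+t_1+2t_2$'' is not bounded by $5$ without already using $\mu'(X_0)<0$. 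The machinery you sketch (nontriviality of the far side, edge counts against \eqref{46mu}, clique structure of $X_0$) is exactly what the paper deploys in the \emph{later} claims (Claims~\ref{46c3} and~\ref{46c4}), where Claim~\ref{46c2} is used as input --- but for Claim~\ref{46c2} itself, none of it is needed or helpful.
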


\begin{proof}
Suppose to the contrary that $t_1+2t_2 \ge 6$.  Then by (R1)-(R4), $$\mu'(X_0)= 2|X_0|-3-2t_0 -\frac{3}{2}t_1 - t_2 =\frac{1}{2}(t_1 +2t_2)-3\ge 0,$$
a contradiction to the assumption that $\mu'(X_0)<0$.
\end{proof}

Note that $V(G)-X_0\neq \emptyset$, for otherwise, if $X_0 = V(G)$, then $\mathcal{X}$ contains at least $V$ and $X_m$, which implies that $\sum_{i=1}^{m} \mu(X_i) \ge (2|V| -3) + (2|X_m|-3) = 2|V|-2$, contradicting (\ref{46mu}).

\begin{claim}\label{46c3}
$V_0\cup V_1$ is not an independent set.
\end{claim}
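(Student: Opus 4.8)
The plan is to proceed by contradiction: suppose $V_0\cup V_1$ is an independent set of $G$. The key point I want to exploit is that Claim~\ref{46c2} has already pinned $V_2$ down to be very small, whereas $|X_0|\ge 5$ forces $V_0\cup V_1$ to be fairly large. Concretely, $t_1+2t_2\le 5$ gives $t_2\le 2$, and hence $|V_0\cup V_1| = t_0+t_1 = |X_0|-t_2 \ge 5-2 = 3$; in particular $V_0\cup V_1\neq\emptyset$, so the assumed independence is not vacuous and there really is a vertex to work with.

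Next I would fix an arbitrary vertex $x\in V_0\cup V_1$ and track where its neighbors lie. Since $V_0\cup V_1$ is independent, $x$ has no neighbor in $V_0\cup V_1$, so every neighbor of $x$ lying in $X_0$ must lie in $V_2=X_0\setminus(V_0\cup V_1)$. Now split into the two cases for $x$. If $x\in V_0$, then by definition $N(x)\subseteq X_0$, so $N(x)\subseteq V_2$; as $G$ is $4$-connected (and $|V(G)|\ge |X_0|+1\ge 6$) we have $d(x)\ge 4$, and therefore $t_2=|V_2|\ge 4$, contradicting $t_2\le 2$. If $x\in V_1$, then $x$ has exactly one neighbor outside $X_0$, so at least $d(x)-1\ge 3$ of its neighbors lie in $X_0$ and hence in $V_2$, forcing $t_2\ge 3$, again a contradiction. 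In either case the assumption collapses, so $V_0\cup V_1$ is not independent.

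I do not expect any genuine obstacle in this argument: it is a short deduction from the counting inequality $t_1+2t_2\le 5$ of Claim~\ref{46c2} combined with the minimum-degree bound $\delta(G)\ge 4$ supplied by $4$-connectivity. The only subtlety worth a sentence is making sure $V_0\cup V_1\neq\emptyset$ before speaking of it being independent, and that follows at once from $|X_0|\ge 5$ and $t_2\le 2$.
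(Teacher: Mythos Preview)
Your argument is correct and essentially the same as the paper's. The paper derives $t_2\le 2$ from Claim~\ref{46c2}, notes $|V_0\cup V_1|\ge 3$, and then observes directly that any $x\in V_0\cup V_1$ has at most one neighbor outside $X_0$ and at most $t_2\le 2$ neighbors in $V_2$, so by $d(x)\ge 4$ it must have a neighbor in $V_0\cup V_1$; your contradiction framing and case split on $x\in V_0$ versus $x\in V_1$ amount to the same counting.
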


\begin{proof}
By Claim~\ref{46c2} that $t_1+2t_2\leq5$, we have $t_2\leq2$. Since $|X_0|\geq5$, we have $|V_0\cup V_1| = |X_0| - t_2 \ge 3$. For each $x\in V_0\cup V_1$,  $x$ has at most one neighbor outside of $X_0$ and at most 2 neighbors in $V_2$. Since $d(x)\geq 4$,   $x$ must have a neighbor in $V_0\cup V_1$.
\end{proof}

\begin{claim}\label{46c4}
$V(G)-X_0$ is an independent set.
\end{claim}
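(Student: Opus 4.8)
The plan is to argue by contradiction: assuming $V(G)-X_0$ contains an edge $ab$, I will produce a set $S\subseteq V(G)$ with $|S|\le 5$ such that $G-S$ has at least two nontrivial components, contradicting the essential $6$-connectedness of $G$. I will repeatedly use Claim~\ref{46c2} (so in particular $t_2\le 2$ and $t_1+t_2\le 5$), Claim~\ref{46c3} (so $G[V_0\cup V_1]$ contains an edge, hence has a nontrivial component), the $4$-connectedness of $G$, and, in the last case, the minimality of the cover $\mathcal{T}$ of $T$.

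First I would take $S=B:=V_1\cup V_2$. This is exactly the set of vertices of $X_0$ having a neighbour outside $X_0$, and $|B|=t_1+t_2\le 5$. Every edge between $X_0$ and $V(G)-X_0$ has its $X_0$-endpoint in $B$, so $G-B$ has no edge between $V_0$ and $V(G)-X_0$; as some component of $G-B$ meeting $V(G)-X_0$ carries $ab$, that component is nontrivial. Hence if $G[V_0]$ had an edge, $B$ would be a cut of size at most $5$ with two nontrivial components, a contradiction. So $V_0$ is independent.

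Next I would use a second cut. For $v\in V_1$ let $v'$ be its unique neighbour outside $X_0$, set $V_1'=\{v':v\in V_1\}$ (so $V_1'\cap X_0=\emptyset$ and $|V_1'|\le t_1$), and take $S=V_2\cup V_1'$, so $|S|\le t_1+t_2\le 5$. Since $S$ is disjoint from $V_0\cup V_1$ and every edge leaving $V_0\cup V_1$ enters $V_2$ or $V_1'$, the set $V_0\cup V_1$ is a union of components of $G-S$, and it is nontrivial by Claim~\ref{46c3}. Thus, if $G[\,V(G)-X_0-V_1'\,]$ contained an edge, $G-S$ would have two nontrivial components, a contradiction. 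So every edge of $G[V(G)-X_0]$ meets $V_1'$; in particular $t_1\ge 1$, and $P:=V_1\cup V_2\cup V_1'$ is a vertex cover of $G$ of size at most $2t_1+t_2$.

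The remaining configuration is the crux, and here I expect to split on whether $V_0=\emptyset$. If $V_0=\emptyset$ then $t_2=0$ and $X_0=V_1$ with $|X_0|=t_1=5$; each vertex of $X_0$ has exactly one neighbour outside $X_0$, so $G[X_0]$ has minimum degree at least $3$ and carries at least $8$ edges, similar degree counting forces $G[V_1']$ together with the edges between $V_1'$ and the rest to be dense, and then bounding $|E(G)|$ from below and comparing with the cost bound $\sum_{X\in\mathcal{X}}(2|X|-3)\le 2|V|-3$ from (\ref{46mu}) gives a contradiction (for instance, if $V(G)=X_0\cup V_1'$ then $X_0$ already costs $7$, the five $2$-element sets $\{v,v'\}$ cost $5$, and covering the $\ge 8$ edges inside the $5$-vertex set $V_1'$ costs at least $7$ more, which exceeds $2|V|-3$). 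If $V_0\neq\emptyset$, then every vertex of $V_0$ has all of its $\ge 4$ neighbours inside $V_1\cup V_2$, forcing $t_2\le 1$ and $t_1\ge 3$; now $B$ is a genuine cut, so by the Step~1 argument $G[V(G)-X_0]$ has exactly one nontrivial component, and I would finish by analysing that restricted structure together with the small vertex cover $P$. In both cases I would also invoke minimality of $\mathcal{T}$: if two vertices of $V_1$ shared an outside neighbour, then $|V_1'|$ is small enough that replacing $X_0$ by $X_0\cup V_1'$ and deleting the now-redundant $2$-element sets $\{v,v'\}$ strictly decreases $\sum(2|Y|-3)$, a contradiction; otherwise $|V_1'|=t_1$, and the bound $|P|\le 2t_1+t_2$ together with the degree-$\ge 4$ condition on the independent set $V(G)\setminus P$ pushes $|E(G)|$, hence the cover cost, above $2|V|-3$. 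This last step is the main obstacle: the essential-connectivity cuts of Steps~1--2 only pin down the structure and do not by themselves close the argument once every edge outside $X_0$ is hidden behind the attachment set $V_1'$ (in particular when both endpoints of $ab$ lie in $V_1'$), so one is forced into the quantitative edge-count and cover-improvement bookkeeping, and the delicate point is obtaining a strict contradiction with (\ref{46mu}) in every subcase.
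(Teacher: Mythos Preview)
Your first two cuts are exactly right and match the paper: $B=V_1\cup V_2$ shows $V_0$ is independent, and $V_2\cup V_1'$ shows every edge of $G-X_0$ meets $V_1'$. The divergence is in the ``crux'' step, where your proposed completion has real gaps.

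First, the cover-improvement move is miscounted. Replacing $X_0$ (cost $2|X_0|-3$) and the $t_1$ two-element sets $\{v,v'\}$ (total cost $t_1$) by the single set $X_0\cup V_1'$ (cost $2|X_0|+2|V_1'|-3$) changes the total by $2|V_1'|-t_1$; this is a strict decrease only when $|V_1'|<t_1/2$, so with $t_1=5$ you would need $|V_1'|\le 2$, not merely $|V_1'|\le t_1-1$. Two vertices of $V_1$ sharing an outside neighbour is nowhere near enough.

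Second, the edge-count against \eqref{46mu} is too soft as stated. Knowing that $P=V_1\cup V_2\cup V_1'$ is a vertex cover gives $|E(G)|\ge 4(|V|-|P|)$, but there is no general inequality of the form ``cover cost $\ge c\,|E|$'' (a single set $X=V$ covers everything at cost $2|V|-3$). One can try to be more careful, using that $X_0$ and the $t_1$ two-sets are already in $\mathcal{X}$ so the remaining sets must cover $E(G[V\setminus X_0])$ at cost below $2|V|-2-(2|X_0|-3)-t_1$; but turning this into a strict contradiction in every subcase (in particular when $|V_1'|$ is large and the outside graph can itself be covered cheaply) is not routine, and your sketch does not do it. Your own example illustrates the fragility: you assert that covering $\ge 8$ edges in a $5$-vertex set $V_1'$ costs at least $7$, but a single set of size $5$ costs exactly $7$ and covers up to $10$ edges, so the inequality is tight rather than contradictory.

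The paper avoids counting almost entirely and instead produces further essential cuts by \emph{swapping} a boundary vertex with its outside neighbour. For $V_0\neq\emptyset$ it picks $x\in V_1$ adjacent to some vertex of $V_0$, lets $x'$ be its outside neighbour, and uses the cut $(V_1\cup V_2)-x+x'$ of size $\le 5$: one side contains $V_0\cup\{x\}$ (nontrivial), so every edge of $G-X_0$ must be incident with $x'$; a short count on $E[X_0,\,V\setminus X_0\setminus\{x'\}]$ then forces $|V\setminus X_0|\le 2$ and a contradiction. For $V_0=\emptyset$ (hence $t_1=5$, $t_2=0$) it first forces $|V_1'|\ge 4$, finds vertices $x_1',x_2'\in V_1'$ each with a unique neighbour $x_1,x_2\in V_1$, and uses cuts of the form $(V_1'\setminus\{x_1',x_2'\})\cup\{x_1,x_2\}$ or $(V_1'\setminus\{x'\})\cup\{x\}$. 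These swap cuts are the missing idea in your plan; once you have them, no cover-cost bookkeeping is needed beyond a couple of one-line edge counts.
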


\begin{proof}
Suppose to the contrary that $V(G)-X_0$ is not an independent set.

We first claim that $V_0$ is an independent set if $V_0\not=\emptyset$.  For otherwise, $V_1\cup V_2$ is a cut of size at most $t_1+t_2\le 5$ such that both $V_0$ and $G-X_0$ are non-trivial, a contradiction.

We claim that each edge $e\in E(G-X_0)$ must be incident with a vertex in $V'_1$. Otherwise, by Claim \ref{46c3}, $V_0\cup V_1$ is not an independent set, so $V'_1\cup V_2$ is an essentially cut and $|V'_1\cup V_2|\leq t_1+t_2\leq5$ since $t_1+2t_2\leq5$, contrary to the fact that $G$ is essentially 6-connected.


First let $V_0=\emptyset$.  As $5\le |X_0|=t_1+t_2$, and $t_1+2t_2\le 5$ by Claim~\ref{46c2}, $t_2=0$ and $t_1=5$. As each vertex in $V_1$ has degree at least $4$ and has exactly one neighbor not in $X_0$, each vertex in $V_1$ has at least three neighbors in $V_1$. Observe that $|V_1'|\ge 2$, for otherwise, $V_1'$ is a cut of size one. Then some vertex in $V_1'$ has at most $\lfloor\frac{5}{|V_1'|}\rfloor$ neighbors in $V_1$ thus has at least $4-\lfloor\frac{5}{|V_1'|}\rfloor$ neighbors in $G-X_0$. Note that $V_1'$ is a cut if $V(G)-X_0\not=V_1'$. If $|V_1'|\le 3$, then we should have $4-\lfloor\frac{5}{|V_1'|}\rfloor\le |V_1'|-1$, which is impossible.  Therefore, $|V_1'|\ge 4$, and  at least three vertices in $V_1'$ have exactly one neighbor in $V_1$. If there exists $x_1'x_2'\in E(G)$ with $x_1', x_2'\in V_1'$ such that $x_1, x_2$ are the unique neighbors of $x_1', x_2'$ in $V_1$, respectively, then $V_1'\cup \{x_1, x_2\}-\{x_1', x_2'\}$ is an essential cut of size at most five, a contradiction. Then we must have that $x'\in V_1'$ has a unique neighbor $x\in V_1$ such that $x'w\in E(G)$ and $w\in G-X_0-V_1'$, which implies that $V_1'-x'+x$ is an essential cut of size at most five, a contradiction again.

Now let $V_0\not=\emptyset$.  Since $V_0$ is independent, the neighbors of vertices in $V_0$ are in $V_1\cup V_2$.  As each vertex has degree at least four, $t_1+t_2\ge 4$. As $t_1+2t_2\le 5$, we have $t_2=1$ and $t_1=3$, or $t_2=0$ and $4\le t_1\le 5$. In either case, at least three vertices of $V_1$ are adjacent to vertices in $V_0$.  Let $xx'\in E(G)$ with $x\in V_1$ and $x'\in V_1'$ such that $x$ has a neighbor in $V_0$. Then $S=(V_1\cup V_2)-x+x'$ is a cut of size at most five such that the component containing $x$ is non-trivial.  So $G-X_0-x'$ must be trivial, that is, $x'$ must be adjacent to {\em all} edges in $G-X_0$.  Let $|V-X_0|=t$. Then  $$3(t-1)\le |E[X_0, V-X_0-x']|\le t_2\cdot (t-1)+t_1-1.$$
It follows that $t\le 1+\frac{t_1-1}{3-t_2}$.  So $t\le 2$. Since $V(G)-X_0$ is not an independent set, $t\ge 2$.  Therefore $t=2$. Let $V(G)-X_0=\{x', x''\}$.    Then $6\le |E[X_0, \{x',x''\}]|\le 2t_2+t_1\le 5$, by Claim~\ref{46c2}, a contradiction.
\end{proof}

By Claim~\ref{46c4},  assume that $V(G)-X_0$ consists of $t$ isolated vertices. Since $G$ is 4-connected, each of these $t$ vertices has degree at least 4.  Then
$$4t\le |E[X_0,V-X_0]|\le t_1\cdot 1+t_2\cdot t\le (5-2t_2)+t\cdot t_2,$$
which implies that $t\le \frac{5-2t_2}{4-t_2}<2$. So let $V(G)-X_0=\{y\}$.

Since $G$ is 4-connected, $4\le d(y)\le t_1 + t_2$. Then
$$\mu'(X_0)=2|X_0|-3-2t_0-\frac32t_1-t_2=\frac12t_1+t_2-3=\frac12(t_1+2t_2)-3\ge 2-3=-1.$$

For each $X\in \mathcal{X}(y)$ with $|X|>2$, by definition of $V_1$ and $V_0$,  $X\cap (V_0\cup V_1)=\emptyset$ and $X\cap V_2\not=\emptyset$.  So $|X|\le 1+t_2\le 3$.  Therefore, $\mu'(X)\ge 0$ by our discharging rules.

 Therefore, for every $X\in \mathcal{X}-X_0$, $\mu'(X)\ge 0$ and $\mu'(X_0)\ge -1$.  It follows that
$$2|V|-2>\sum_{X\in \mathcal{X}} \mu(X)=\sum_{X\in \mathcal{X}} \mu'(X)+\sum_{x\in V(G)}\mu(x)\ge -1+0+2|V|\ge 2|V|-1,$$
a contradiction, which completes the proof.

\section{The proof of Theorem \ref{th3ess9}}

Suppose to the contrary that the statement is not true. Let $G=(V, E)$ be a counterexample such that $|V(G)|$ is as small as possible, and subject to this condition, $|E(G)|$ is as large as possible.
Thus $G$ is not redundantly rigid, and there exist an edge $e=uv$ such that $T=G-e$ is not rigid.
By Theorem~\ref{th1}, there exists a cover $\mathcal{T}=\{Y_1,Y_2,...,Y_t\}$ of $T$ such that $\sum_{i=1}^t (2|Y_i|-3) < 2|V|-3$.
Without loss of generality, we may assume that $\mathcal{T}=\{Y_1,Y_2,...,Y_t\}$ is a cover of $T$ to minimize $\sum_{i=1}^t (2|Y_i|-3)$.
Let $m=t+1$, $X_i = Y_i$   for $i=1,2, \cdots, t$ and $X_m = X_{t+1} = \{u, v\}$.
Let $\mathcal{X}=\{X_1,X_2,\cdots, X_m\}$, which is then a cover of $G$.  For each $X\in \mathcal{X}$, let $\mu(X)=2|X|-3$. Then
\begin{equation}
\label{39mu}
\sum_{i=1}^{m} \mu(X_i)=\sum_{i=1}^{m}(2|X_i|-3) = \sum_{i=1}^{t}(2|Y_i|-3) + (2|X_m| -3) < 2|V| -2.
\end{equation}

For each $x\in V(G)$, let $\mathcal{X}(x)=\{X_i\in\mathcal{X}: x\in X_i\}$.

\noindent
\textbf{Remark (a):} For any $v_1,v_2\in X_i$, if $|N(v_1)\cup N(v_2)|\geq9$, then $v_1v_2\in E(G)$. Otherwise, the new graph by adding an edge $v_1v_2$ is still 3-connected, essentially $9$-connected and not a redundantly graph, which contradicts the maximality of $|E(T)|$.

\noindent
\textbf{Remark (b):} By the minimality of $\sum_{i=1}^{m-1}(2|X_i|-3)$, we can conclude that each $X_i$ of size at most 3 is a complete graph and if $|X_i|=4$, then $X_i$ induces at least 5 edges. Otherwise, we can replace $X_i$ with a few $X$'s of size 2, each of which induces an edge of $E(X_i)$,  to reduce the value of $\sum_{i=1}^{m-1}(2|X_i|-3)$.

\begin{claim}\label{c1}
In graph $G$, every $3$-vertex is only adjacent to $6^{+}$-vertices.
\end{claim}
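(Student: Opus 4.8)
The plan is to argue by contradiction. Suppose some $3$-vertex $z$ has a neighbour of degree at most $5$, and write $N(z)=\{a,b,c\}$ with $d(a)\le 5$.

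\emph{First step: cut off the edge $za$ by a small separator.} Set $S:=(N(z)\cup N(a))\setminus\{z,a\}$. Since $a\in N(z)$, $z\in N(a)$, and $z\ne a$, we have $|S|=|N(z)\cup N(a)|-2\le d(z)+d(a)-2\le 6<9$. As $\{b,c\}=N(z)\setminus\{a\}\subseteq S$ and $N(a)\setminus\{z\}\subseteq S$, in $G-S$ the vertex $z$ is adjacent only to $a$ and $a$ only to $z$, so $\{z,a\}$ spans the edge $za$ and forms a connected component of $G-S$ — a nontrivial component.

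\emph{Second step: read off the global structure from essential $9$-connectedness.} Since $|S|<9$, $G-S$ has at most one nontrivial component, hence every component of $G-S$ other than $\{z,a\}$ is a single vertex. Put $W:=V(G)\setminus(S\cup\{z,a\})$; then $W$ is an independent set, $N(w)\subseteq S$ for each $w\in W$, and $d(w)\ge 3$ by $3$-connectedness. Moreover $W\ne\emptyset$, since otherwise $V(G)=S\cup\{z,a\}$ has at most $8$ vertices, contradicting that $G$ is essentially $9$-connected (which requires at least $10$ vertices). Thus $G$ consists of a $K_2$ (the pair $\{z,a\}$) together with a nonempty independent set $W$, all of it attached to the set $S$ of size at most $6$.

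\emph{Third step (the crux, and the expected main obstacle): rule out this configuration.} A configuration as in the second step is by itself consistent with $3$-connectedness and essential $9$-connectedness, so the contradiction has to come from $G$ being an edge-maximal counterexample — that is, from the cover $\mathcal X$ with $\sum_{X\in\mathcal X}(2|X|-3)<2|V|-2$ of \eqref{39mu}, together with Remarks (a) and (b). The plan is a counting argument on $\mathcal X$. Every edge incident with $z$, with $a$, or with a vertex of $W$ has its other endpoint in $S\cup\{z,a\}$, so one tracks how the cover sets meeting $\{z,a\}\cup W$ can cover these edges. By Remark (b) any cover set of size at most $3$ induces a clique, and $W$ is independent, so a cover set covering edges at two distinct vertices of $W$ — or at $z$ and a vertex of $W$ — must have size at least $4$, and Remark (a) bounds the sizes of cover sets containing such non-adjacent pairs. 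Using in addition that each $w\in W$ must have its $\ge 3$ edges into $S$ covered and that $a$ must have its $\le 5$ edges covered, the goal is to deduce $\sum_{X\in\mathcal X}(2|X|-3)\ge 2|V|-2$, contradicting \eqref{39mu}. The delicate part is the bookkeeping of how the few vertices of $S$ are shared among the cover sets serving $z$, $a$, and the many vertices of $W$; this is where the bulk of the argument goes.
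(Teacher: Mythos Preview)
Your first two steps are correct and coincide with the paper's setup: the paper also takes a $3$-vertex $x$ with a $5^-$-neighbour $v_1$, forms the same separator $S=\{v_2,v_3\}\cup(N(v_1)\setminus\{x\})$ of size at most $6$, and uses essential $9$-connectedness to conclude that $V(G)\setminus(S\cup\{x,v_1\})$ is an independent set $T$ with $N(u)\subseteq S$ for every $u\in T$.

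The divergence is in your third step, and here there is a genuine gap. You propose to reach a contradiction by a counting argument on the fixed cover $\mathcal X$, aiming to force $\sum_{X\in\mathcal X}(2|X|-3)\ge 2|V|-2$. But you do not carry this out, and it is not clear it can be made to work: the structure ``a $K_2$ and an independent set hanging off a separator of size $\le 6$'' does not by itself force large cover sums (a single set such as $S\cup T$ can absorb all the $T$--$S$ edges cheaply), and the essential $9$-connectedness hypothesis --- which you correctly note must be the source of the contradiction --- is a global separation condition that does not translate readily into local constraints on how cover sets overlap. Remarks~(a) and~(b) give you very little here, since vertices of $T$ have degree at most $|S|\le 6$, so Remark~(a) rarely fires.

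The paper takes a completely different route in step three: it abandons the cover $\mathcal X$ entirely and instead proves directly that $G-e$ contains a spanning minimally rigid (Laman) subgraph for \emph{every} edge $e$, so that $G$ is redundantly rigid and hence not a counterexample. This is done by an explicit, lengthy case analysis (the whole appendix) on the values of $|S|$ and on how the vertices of $T$ attach to $S$, repeatedly exploiting essential $9$-connectedness to rule out small essential cuts and thereby force enough edges in the right places to build the Laman subgraph by hand. The contradiction thus comes from showing $G$ is redundantly rigid, not from bounding the cover sum. If you want to pursue your counting idea, you would first need to explain concretely how essential $9$-connectedness constrains the cover --- and the length of the paper's argument suggests that no short bookkeeping will suffice.
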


\begin{proof}
Here we give a brief idea of the proof.  The proof involves routine case analysis and is tedious, which is included in the appendix. Let $x\in V(G)$ be a $3$-vertex and $N(x)=\{v_1,v_2,v_3\}$. Suppose that $v_1$ is a $5^-$-vertex. Let $S=\{v_2,v_3\}\cup (N(v_1)-x)$.  Then $|S|\le 6$ and one component in $G-S$ contains edge $xv_1$.  As $G$ has no essential cut of size less than $9$, the other components of $G-S$ must form an independent set, say $T=\{u_1, \ldots, u_t\}$.  Note that $N(u_i)\subseteq S$.  We will find a spanning minimally rigid subgraph, namely a subgraph $H$ such that $|E(H)|=2|V(H)|-3$ and every subgraph $H'$ of $H$ satisfies $|E(H')|\le 2|V(H')|-3$, in $G-e$ for every $e\in E(G)$, which implies that $G$ is not a counterexample.
\end{proof}

In the rest of the proof, we will use a discharging argument to  reach a contradiction.   We will distribute the $\mu(X)$ value of each $X\in \mathcal{X}$ to vertices of $X$, and we denote by $\sigma_x(X)$ the value that $x$ gets from $X$. The following are the discharging rules.

\begin{enumerate}[(R1)]
\item Suppose $|X|=2$. Then $\mu(X)=1$.

  \begin{enumerate}
   \item [(R1A)] If $X=\{v_1, v_2\}$ with $d(v_1)=3$, then $\sigma_{v_1}(X)=\frac23$ and $\sigma_{v_2}(X)=\frac13$.
   \item[(R1B)] If $X$ contains two $4^+$-vertices, then each vertex gets $\frac12$ from $X$.
  \end{enumerate}
 \smallskip
\item Suppose $|X|=3$. Then $\mu(X)=3$.
  \begin{enumerate}
   \item [(R2A)] If $X=\{v_1, v_2, v_3\}$ with $d(v_1)=3$, then $\sigma_{v_1}(X)=\frac43$ and $\sigma_{v_2}(X)=\sigma_{v_3}(X)=\frac56$.
   \item[(R2B)] If $X$ contains three $4^+$-vertices, then each vertex gets $1$ from $X$.
  \end{enumerate}
  \smallskip
\item Suppose $|X|=4$. Then $\mu(X)=5$.
\begin{enumerate}
   \item [(R3A)] If $X$ contains a 3-vertex $v_1$ and $N(v_1)\subseteq X$, then $\sigma_{v_1}(X)=2$ and each of other vertices gets 1 from $X$.
   \item [(R3B)] If $X$ contains a $4^-$-vertex $v_1$ such that $\mathcal{X}(v_1)=\{X_1,X_2\}$ with $|X_1|=2,|X_2|=4$, then $\sigma_{v_1}(X)=\frac32$ and each of other vertices gets 1 from $X$.
   \item [(R3C)] Otherwise, each vertex of $X$ gets $\frac54$ from $X$.
  \end{enumerate}
  \smallskip
\item Suppose $|X|\geq5$. Then $\mu(X)=2|X|-3$.
  \begin{enumerate}
   \item [(R4A)] If $(N(x)\cup \{x\})\subseteq X$, then $\sigma_{x}(X)=2$.
   Let $V_0 (X)$ be the set of all those vertices $x$ in $X$, and denote $|V_0(X)|=t_0(X)$.
 \smallskip

\item[(R4B)] Suppose $x\in X$ has only one adjacent vertex $x'\notin X$.
 \begin{enumerate}
  \item[(R4B1)] If $d(x')=3$, and there is an $X'=\{x,x'\}\in \mathcal{X}$, then $\sigma_{x}(X)=\frac53$.
  Let $V_1 (X)$ be the set of all those vertices $x$ in $X$, and let $|V_1(X)|=t_1(X)$ and $V'_1(X)=\{x': x'\in N(x)$ and $x'\notin X, x\in V_1(X)\}$.
   \item[(R4B2)] If $x\not\in V_1$, then $\sigma_{x}(X)=\frac32$.
   Let $V_2 (X)$ be the set of all those vertices $x$ in $X$, and let $|V_2(X)|=t_2(X)$ and $V'_2(X)=\{x': x'\in N(x)$ and $x'\notin X, x\in V_2(X)\}$.
   \end{enumerate}
 \smallskip
\item[(R4C)] Suppose $x\in X$ has at least two adjacent vertices outside of $X$.
  \begin{enumerate}
   \item[(R4C1)] Suppose $x$ has exactly two neighbors $x'$ and $x''$ outside of $X$.
    If $d(x')=d(x'')=3$ and there are $X_1=\{x,x'\},X_2=\{x,x''\}\in \mathcal{X}$, then $\sigma_{x}(X)=\frac43$. Let $V_3 (X)$ be the set of all those vertices $x$ in $X$, and let $|V_3(X)|=t_3(X)$ and $V'_3(X)=\{x': x'\in N(x)$ and $x'\notin X, x\in V_3(X)\}$.
  \item[(R4C2)] Suppose $x$ has exactly two neighbors $x'$ and $x''$ outside of $X$. 
  If $d(x')=3$ and $x\not\in V_3$, then $\sigma_{x}(X)=\frac76$. Let $V_4 (X)$ be the set of all those vertices $x$ in $X$, and let $|V_4(X)|=t_4(X)$ and $V'_4(X)=\{x': x'\in N(x)$ and $x'\notin X, x\in V_4(X)\}$.
   \item[(R4C3)] Otherwise, $\sigma_{x}(X)=1$. Let $V_5 (X)$ be the set of all those vertices $x$ in $X$, and let $|V_5(X)|=t_5(X)$ and $V'_5(X)=\{x': x'\in N(x)$ and $x'\notin X, x\in V_5(X)\}$.
  \end{enumerate}
  \end{enumerate}
\end{enumerate}


\vspace{0.1in}
For $x\in V(G)$, let $\mu(x)=\sum_{X\in \mathcal{X}(x)}\sigma_x(X)$, i.e., the total charge that $x$ gets from $\mathcal{X}(x)$.

\begin{claim}\label{c3}
$\mu(x)\geq2$, for every $x\in V(G)$.
\end{claim}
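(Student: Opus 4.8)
The plan is to prove $\mu(x)\ge 2$ for each $x$ by a case analysis on $d(x)$, split into $d(x)=3$, $d(x)\in\{4,5\}$, and $d(x)\ge 6$. I would use three standing facts. (i) Claim~\ref{c1}: every neighbour of a $3$-vertex has degree $\ge 6$; in particular a vertex adjacent to a $3$-vertex cannot itself have degree $4$ or $5$. (ii) By Remark (b) a part of size $\le 3$ is complete and a part of size $4$ induces at least $5$ edges; moreover, choosing $\mathcal{X}$ among minimisers of $\sum_{X\in\mathcal{X}}(2|X|-3)$ with the largest possible number of parts of size $2$ (a $K_3$, or a size-$4$ part inducing exactly $5$ edges, can be traded for three or five parts of size $2$ without changing $\sum(2|X|-3)$), I may assume every part is a $K_2$, a $K_4$, or has size $\ge 5$; note that then every part of size $\ge 4$ gives each of its vertices at least $1$, by (R3A)--(R4C3). (iii) Essential $9$-connectedness, used to kill the remaining degenerate configurations --- for example a $K_4$ part two of whose degree-$4$ vertices each lie in only two parts of $\mathcal{X}$ would expose a cut of size $\le 4$ separating those two (adjacent) vertices from the rest, an essential cut, which is impossible; in particular each $K_4$ part has at most one vertex eligible for the bonus of (R3B), so that rule is unambiguous.

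For $d(x)=3$, say $N(x)=\{a,b,c\}$ with $a,b,c$ of degree $\ge 6$. A $K_2$ of $\mathcal{X}(x)$ gives $x$ exactly $\frac23$ by (R1A) (its other vertex is not a $3$-vertex, by (i)), and a part of size $\ge 4$ gives $x$ at least $1$; so every part of $\mathcal{X}(x)$ gives $x$ at least $\frac23$. If $|\mathcal{X}(x)|\ge 3$ this already gives $\mu(x)\ge 2$. If $|\mathcal{X}(x)|=1$ the single part contains $N(x)\cup\{x\}$ and gives $\sigma_x=2$ by (R3A) or (R4A). If $|\mathcal{X}(x)|=2$, some part covers two of the edges at $x$; being of size $\ge 4$ but not a $K_4$ (a $K_4$ through $x$ would cover all three edges, forcing $|\mathcal{X}(x)|=1$), it has size $\ge 5$ and gives $x$ at least $\frac32$ by (R4B2), its single outside neighbour having degree $\ge 6$; together with the $\ge\frac23$ from the other part this gives $\mu(x)\ge 2$.

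For $d(x)\in\{4,5\}$, $x$ has no $3$-vertex neighbour by (i), so every $K_2$ of $\mathcal{X}(x)$ gives $x$ exactly $\frac12$ by (R1B), while every part of size $\ge 4$ gives $x$ at least $1$, with $\sigma_x=2$ when $N(x)$ lies in one part. If $|\mathcal{X}(x)|\ge 4$ then $\mu(x)\ge 4\cdot\frac12=2$. If $|\mathcal{X}(x)|=3$, some part covers $\ge 2$ edges at $x$ and so has size $\ge 4$ and gives $\ge 1$, and the other two give $\ge\frac12$ each, so $\mu(x)\ge 2$. If $|\mathcal{X}(x)|\le 2$, either two size-$\ge 4$ parts each contribute $\ge 1$, or a $K_4$ carries three of $x$'s edges and a $K_2$ the fourth; in the latter case $x$ is itself a degree-$4$ vertex lying in exactly two parts (the $K_4$ and a $K_2$), hence the vertex eligible for (R3B) on that $K_4$ (the only one, by (iii)), so $\sigma_x(K_4)=\frac32$ and $\mu(x)=\frac32+\frac12=2$.

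The case $d(x)\ge 6$ is the main one, and the chief obstacle. Now $x$ may be adjacent to $3$-vertices, a $K_2$ of $\mathcal{X}(x)$ with a degree-$3$ partner gives only $\frac13$ by (R1A), and a large part carrying four or more edges at $x$ may give only $1$ by (R4C3), so neither a per-part nor a per-set bound suffices. The first reduction is that every part gives $x$ at least $\frac13$, so $|\mathcal{X}(x)|\ge 6$ is done at once; thus $|\mathcal{X}(x)|\le 5$. Let $a$ be the number of $K_2$'s of $\mathcal{X}(x)$ with a degree-$3$ partner; these cover only $a$ of the $\ge 6$ edges at $x$, so $a\le 4$ and the $\le 5-a$ remaining parts cover $\ge 6-a$ edges, forcing at least one of them to be large. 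The crux is that the discharging rules are tuned so the $\frac13$-deficit of each such $K_2$ is matched by a bonus on a large part: if a large part $X$ has exactly one outside neighbour, the degree-$3$ partner of a $K_2$ of $\mathcal{X}(x)$, then (R4B1) gives $\sigma_x(X)=\frac53$ and $\frac53+\frac13=2$; if it has exactly two such, (R4C1) gives $\frac43$ and $\frac43+\frac13+\frac13=2$; and (R4C2) absorbs the mixed case via $\frac76+\frac13+\frac12=2$. I would carry this out by cases on $a$ and on $|\mathcal{X}(x)|-a$, in each case locating the large part(s), counting the edges at $x$ each covers, and reading off the degrees of the at most $|\mathcal{X}(x)|-a$ outside neighbours not sitting in such $K_2$'s. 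The main obstacle is the number of sub-cases and the need, in each, to see that every deficient $K_2$ finds its matching bonus (or that a size-$\ge 4$ part already carries the slack), invoking (i) and essential $9$-connectedness precisely where a naive count falls just short.
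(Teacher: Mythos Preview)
Your overall approach is sound and close in spirit to the paper's, but there are a few points worth flagging.

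\textbf{The cover modification.} Your device of refining the choice of $\mathcal{T}$ (among minimisers of $\sum(2|Y_i|-3)$, maximise the number of $K_2$'s, so that no size-$3$ parts and no size-$4$ parts with exactly five edges survive) is a genuine simplification the paper does not make. It is legitimate as a further ``without loss of generality'' on the already-unspecified minimiser, and it lets you drop (R2) entirely. Just be aware that you are then proving the claim for \emph{this} particular $\mathcal{X}$, and you would need the later Claims~\ref{order-4}--\ref{c6} to go through for it as well (they do, with essentially no change).

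\textbf{Point (iii) is not right, and not needed.} Your essential-$9$-connectedness argument for uniqueness of the (R3B)-eligible vertex in a $K_4$ is incomplete: the four-vertex set $\{c,d,a',b'\}$ need not be an \emph{essential} cut, since the complement could be independent. In fact the paper (in its Claim~\ref{order-4}) only shows there are at most \emph{two} such vertices, not one. Fortunately this does not matter for Claim~\ref{c3}: under the natural reading of (R3B), \emph{every} eligible vertex receives $\tfrac32$, so if $x$ is eligible it gets $\tfrac32$ regardless of whether another vertex is also eligible.

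\textbf{A small gap for $d(x)\in\{4,5\}$, $|\mathcal{X}(x)|\le 2$.} You write ``either two size-$\ge 4$ parts each contribute $\ge 1$, or a $K_4$ carries three of $x$'s edges and a $K_2$ the fourth''. You have omitted the case of one $K_2$ together with one part of size $\ge 5$ (which occurs when $d(x)=5$, or when $d(x)=4$ and the big part is not a $K_4$). This case is fine: $x$ has exactly one neighbour outside the big part, that neighbour has degree $\ge 4$ by (i), so (R4B2) gives $\tfrac32$, and $\tfrac32+\tfrac12=2$.

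\textbf{The case $d(x)\ge 6$.} You have correctly identified the governing mechanism: each $K_2$ with a degree-$3$ partner contributes only $\tfrac13$, and the rules (R4B1), (R4C1), (R4C2) are tuned precisely so that a big part with one or two such outside partners compensates ($\tfrac53+\tfrac13=2$, $\tfrac43+2\cdot\tfrac13=2$, $\tfrac76+\tfrac13+\tfrac12=2$). But you have not carried out the case analysis; ``I would carry this out by cases on $a$ and $|\mathcal{X}(x)|-a$'' is a plan, not a proof. The paper's execution is short: it splits instead on whether $x$ has a $3$-vertex neighbour (if not, every $K_2$ gives $\tfrac12$ and the count is easy), and for $d(x)\ge 6$ with a $3$-neighbour it further splits on the number $t$ of $K_2$'s and the number $s$ of size-$3$ parts in $\mathcal{X}(x)$, using $\tfrac13t+\tfrac56s\ge\tfrac13(t+2s)$ and reading off the bonus rule in the one residual case $t=2$, $s=0$, $|\mathcal{X}(x)|=3$. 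Under your cover modification $s=0$ automatically, which would streamline this further --- but you still need to write it down.
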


\begin{proof}
First let $d(x)=3$.  By (R3A), we may assume that $|\mathcal{X}(x)|\ge 2$.  If $|\mathcal{X}(x)|=3$, then $x$ is in three sets of size at least two, thus by (R1A), $\mu(x)=\frac23\cdot 3=2$. If $|\mathcal{X}(x)|=2$, then $x$ is in a set of size at least two and and in a set of size at least three, thus by (R1A), (R2A), (R3) and (R4), $\mu(x)\ge \frac23+\frac43=2$.

Let $d(x)\ge 4$. By (R4A), we may assume that $|\mathcal{X}(x)|\ge 2$. We first assume that $x$ has no neighbors of degree $3$.  If each $X\in \mathcal{X}(x)$ has $|X|\ge 3$, then by (R2)-(R4), $x$ gets at least $1$ from each $X\in \mathcal{X}(x)$, thus $\mu(x)\ge 2$. So we may assume that $|\mathcal{X}(x)|\ge 2$ and $t$ sets of $\mathcal{X}(x)$ have size two.  By (R1B) and (R2)-(R4), if $t=d(x)$, then $\mu(x)\ge \frac12 t\ge 2$; if $t<d(x)$, then $x$ gets at least $1$ from sets of size more than two, thus $\mu(x)<2$ only if $t=1$ and $|\mathcal{X}(x)|=2$, in which case, by (R3B) and (R4B2), $\mu(x)\ge \frac12+\frac32=2$.

Now we assume that $x$ has a neighbor of degree $3$. By Claim~\ref{c1}, $d(x)\ge 6$.  If $\mathcal{X}(x)$ does not contain sets of size two, then by (R2)-(R4), $x$ gets at least $\frac56$ from each $X\in \mathcal{X}(x)$, so $\mu(x)<2$ only if $x$ gets a $\frac56$ by (R2A) and a $1$ by (R4C3), but these two cases cannot happen at the same time since $d(x)\ge 6$.  So assume that $t\ge 1$ sets of $\mathcal{X}(x)$ have size two.        If $|\mathcal{X}(x)|=2$, then $t=1$ and by (R1) and (R4B), $\mu(x)\ge\min\{\frac13+\frac53,\frac12+\frac32\}=2$. So let $|\mathcal{X}(x)|\ge 3$.  Assume that $\mathcal{X}(x)$ contains $s$ sets of size three. If $t+2s=d(x)$, then by (R1A) and (R2A), $\mu(x)=\frac13 t+\frac56 s\ge \frac13(t+2s)\ge 2$. So let $t+2s<d(x)$, then by (R1A) and (R2)-(R4), $\mu(x)\ge \frac13 t+\frac56s+1$. Clearly, as $t\ge 1$, $\mu(x)\ge 2$ if $s\ge 1$. So let $s=0$.  As $|\mathcal{X}(x)|\ge 3$ and by (R2)-(R4), $x$ gets either $\frac13$ or at least $1$ from each set in $\mathcal{X}(x)$, we have $\mu(x)<2$ only if $t=2$ and $|\mathcal{X}(x)|=3$, in which case, by (R4C1), $\mu(x)\ge \frac13\cdot 2+\frac43=2$.
\end{proof}

Now we prove that for each $X\in \mathcal{X}$,  the final charge $\mu'(X)\ge 0$.

\begin{claim}\label{order-4}
For each $X\in \mathcal{X}$ with $|X|\le 4$, $\mu'(X)\ge 0$.
\end{claim}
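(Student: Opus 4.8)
The plan is to split by $|X|\in\{2,3,4\}$ and to show in each case that the rules (R1)--(R3) send out a total of at most $\mu(X)$, so that $\mu'(X)\ge 0$. For $|X|=2$ this is immediate: $G$ is $3$-connected, so $\delta(G)\ge 3$, and hence every size-$2$ set falls under (R1A) or (R1B), each of which distributes exactly $\mu(X)=1$. For $|X|=3$, Remark (b) gives $G[X]=K_3$, and Claim~\ref{c1} forces $X$ to contain at most one $3$-vertex (two $3$-vertices of $X$ would be adjacent, whereas a $3$-vertex is adjacent only to $6^+$-vertices); thus exactly one of (R2A), (R2B) applies, and it distributes exactly $\mu(X)=3$.

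The real work is the case $|X|=4$, where $\mu(X)=5$. Rules (R3A) and (R3C) each distribute exactly $5$; for (R3A) one first notes that $X$ contains at most one $3$-vertex $v_1$ with $N(v_1)\subseteq X$, since a second such vertex would be a $3$-vertex adjacent to $v_1$ (as $v_1$ is joined to all of $X\setminus v_1$), contradicting Claim~\ref{c1}. For (R3B), if $X$ contains $k$ of the vertices singled out there---$4^-$-vertices $v$ with $\mathcal X(v)=\{X',X\}$ and $|X'|=2$---then (R3B) sends out $\tfrac32 k+(4-k)=4+\tfrac12 k$, so $\mu'(X)\ge 0$ exactly when $k\le 2$. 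Hence everything reduces to the statement that \emph{no size-$4$ set of $\mathcal X$ contains three such vertices}, which is the analogue of the size-$4$ subclaim in the proof of Theorem~\ref{th4ess6}.

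To prove this, suppose $v_1,v_2,v_3\in X=\{v_1,v_2,v_3,v_4\}$ are three such vertices; write $X'_i=\{v_i,u_i\}\in\mathcal X$, so $v_iu_i\in E(G)$ by Remark (b), and observe that, since $\mathcal X(v_i)=\{X'_i,X\}$ covers every edge at $v_i$, we have $N(v_i)\subseteq(X\setminus v_i)\cup\{u_i\}$. The first step is to show each $v_i$ is a $4$-vertex: a $3$-vertex $v_i$ would need two neighbours in $X$, but by Claim~\ref{c1} it is not adjacent to the $4^-$-vertices $v_j$ ($j\ne i$, $j\le 3$), leaving only $v_4$ and contradicting $d(v_i)\ge 3$. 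So $N(v_i)=(X\setminus v_i)\cup\{u_i\}$ and $G[X]=K_4$. Put $U=\{u_1,u_2,u_3\}$ and $R=V(G)\setminus(X\cup U)$. An essentially $9$-connected graph has at least $10$ vertices, while $|X\cup U|\le 7$, so $R\ne\emptyset$; and $R$ is independent, for an edge inside $R$ together with the triangle $v_1v_2v_3$ would be two nontrivial components of $G-(\{v_4\}\cup U)$, a cut of size at most $4<9$. Consequently $N(r)\subseteq\{v_4\}\cup U$ for every $r\in R$, and since $d(r)\ge 3$ this forces $|U|\ge 2$.

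Two sub-cases remain. If $|U|=3$, then for each $\{i,j,k\}=\{1,2,3\}$ the set $\{v_i,v_4,u_j,u_k\}$ has size $4$ and separates the edge $v_jv_k$ from $\{u_i\}\cup R$, so no $r\in R$ can be adjacent to $u_i$ (otherwise $u_ir$ would be a second nontrivial component); doing this for every $i$ gives $N(r)\subseteq\{v_4\}$, contradicting $d(r)\ge 3$. If $|U|=2$, then $N(r)=\{v_4\}\cup U$ for every $r\in R$, so $H:=G[\{v_4\}\cup U\cup R]$ contains the complete bipartite graph $K_{3,|R|}$ between $\{v_4\}\cup U$ and $R$; since $|R|=|V(G)|-6\ge 4$, this $K_{3,|R|}$, hence $H$, is rigid. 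The four sets $X,\{v_1,u_1\},\{v_2,u_2\},\{v_3,u_3\}$ of $\mathcal X$ cover exactly the six edges of $K_4$ and the three edges $v_iu_i$, and every remaining edge of $G$ lies inside $H$; restricting the remaining sets of $\mathcal X$ to $V(H)$ yields a cover of $H$, so by Theorem~\ref{th1} applied to $H$ those sets have total $\mu$-value at least $2|V(H)|-3$. Hence $\sum_{Y\in\mathcal X}\mu(Y)\ge (5+1+1+1)+(2|V(H)|-3)$, and since $|V(H)|=|V(G)|-3$ this is $2|V(G)|-1>2|V(G)|-2$, contradicting (\ref{39mu}). The main obstacle is precisely this last claim: first pinning down the forced structure ($v_1,v_2,v_3$ all of degree $4$, $G[X]=K_4$, $R$ independent), and then the sub-case $|U|=2$, where a single essential cut does not suffice and one must fall back on the global cover/rigidity inequality coming from Theorem~\ref{th1}.
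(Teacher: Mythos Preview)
Your argument is correct. For $|X|\le 3$ and for the reduction of the $|X|=4$ case to ``at most two (R3B)-type vertices'', you proceed exactly as the paper does (and you are a bit more careful: you explicitly rule out $d(v_i)=3$ via Claim~\ref{c1}, forcing $d(v_i)=4$ and $u_i\notin X$, whereas the paper uses this without comment).

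The one genuine difference is how you dispatch the putative configuration with three (R3B)-vertices. The paper argues uniformly: after observing that $R=V(G)\setminus(X\cup U)$ is independent, it picks $w\in R$ adjacent to some $u_i$, relabels so that this is $u_1$ with $u_1\notin\{u_2,u_3\}$, and exhibits the single essential $4^-$-cut $\{v_1,v_4,u_2,u_3\}$ separating $v_2v_3$ from $wu_1$. This handles $|U|=2$ and $|U|=3$ at once, but the relabelling that makes $u_1\notin\{u_2,u_3\}$ is left implicit. You instead split by $|U|$. Your $|U|=3$ case iterates the same essential-cut idea over all three indices to force $N(r)\subseteq\{v_4\}$. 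Your $|U|=2$ case is a different device: you observe $H=G[\{v_4\}\cup U\cup R]$ contains a spanning $K_{3,|R|}$ with $|R|\ge 4$, hence is rigid, and then use Theorem~\ref{th1} on $H$ together with the fact that every set of $\mathcal X$ other than $X,X_1',X_2',X_3'$ lies inside $V(H)$ (since $\mathcal X(v_i)=\{X_i',X\}$) to get $\sum_{Y\in\mathcal X}\mu(Y)\ge 8+(2|V(H)|-3)=2|V|-1$, contradicting~(\ref{39mu}). This cover-counting route is a clean alternative that avoids the relabelling subtlety, at the cost of invoking rigidity of $K_{3,m}$ and Theorem~\ref{th1} once more.
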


\begin{proof}
If $|X|=2$, then $\mu'(X)=2\cdot2-3-\max\{\frac23+\frac13,\frac12\cdot2\}=0$ by Claim \ref{c1} and (R1).
If $|X|=3$, then $\mu'(X)=2\cdot3-3-\max\{\frac43+\frac56\cdot2,1\cdot3\}=0$ by Claim \ref{c1} and (R2). If $|X|=4$, then $\mu(X)=5$. We know that there are three $6^+$-vertices and one 3-vertex in (R3A) by Claim \ref{c1}. We will show that $X$ contains at most two 4-vertices when $\mathcal{X}(x)=\{X_1,X_2\}$ with $|X_1|=2,|X_2|=4$ for some $x\in X$. Thus, $\mu'(X)\geq 5-\max\{2+1\times3,\frac32\cdot2+1\cdot2,\frac54\cdot4\}=0$.

Suppose to the contrary that $X=\{v_i:  i\in[4]\}$ in which $|X_1(v_i)|=2$ and $|X_2(v_i)|=4$ for $i\in [3]$. Let $X_1(v_i)=\{u_i, v_i\}$.  Since for $i\in [3]$, $d(v_i)=4$ and $v_i$ has exactly one neighbor outside of $X$, $X$ induces a clique. If $V(G)-X-\{u_1,u_2,u_3\}$ induces at least one edge $e'$, then $S=\{u_1,u_2,u_3,v_4\}$ separates $\{v_1,v_2,v_3\}$ and $e'$. We obtain an essentially 4-cut $S$, a contradiction. Thus we may assume that $V(G)-X-\{u_1,u_2,u_3\}$ is an independent set. As $|V(G)|\geq 10$, $V(G)-X-\{u_1,u_2,u_3\}\neq \emptyset$. Since $G$ is 3-connected, every vertex in $V(G)-X-\{u_1,u_2,u_3\}$ is adjacent to at least three vertices in $S$. Without loss of generality, we can assume that $w\in V(G)-X-\{u_1,u_2,u_3\}$ is adjacent to $u_1$. Now we choose $S'=\{v_1,v_4,u_2,u_3\}$ separating $\{v_2,v_3\}$ and $V(G)-X-\{u_2,u_3\}$. Note that $v_2v_3,wu_1\in E(G)$, so we get that $S'$ is an essentially 4-cut, a contradiction.
\end{proof}

So we may only consider $X\in \mathcal{X}$ with $|X|\ge 5$.  Assume for a contradiction that for some $X_0\in \mathcal{X}$ with $|X_0|\ge 5$, $\mu'(X_0)<0$.  We will simply use $V_i$, $V_i'$, and $t_i$ to denote $V_i(X_0)$, $V_i'(X_0)$ and $t_i(X_0)$ for $i\in [5]\cup \{0\}$. Note that $|X_0|=\sum_{i=0}^5 t_i$.

\begin{claim}\label{c4}
$2t_1+3t_2+4t_3+5t_4+6t_5\leq17$.
\end{claim}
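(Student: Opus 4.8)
The plan is to follow the template of Claim~\ref{46c2}: assume the displayed inequality fails and deduce directly that $\mu'(X_0)\ge 0$, contradicting the standing assumption $\mu'(X_0)<0$.

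First I would observe that the six sets $V_0,V_1,\dots,V_5$ form a partition of $X_0$. Indeed, rules (R4A), (R4B1), (R4B2), (R4C1), (R4C2), (R4C3) classify a vertex $x\in X_0$ according to how many of its neighbors lie outside $X_0$ (none, exactly one, or at least two), together with the degrees of those outside neighbors and whether the relevant singleton sets $\{x,x'\}$ belong to $\mathcal{X}$; these conditions are mutually exclusive and jointly exhaustive, so each $x\in X_0$ receives its charge from $X_0$ by exactly one of the rules. Hence $|X_0|=\sum_{i=0}^{5}t_i$.

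Now suppose for contradiction that $2t_1+3t_2+4t_3+5t_4+6t_5\ge 18$. By (R4A)--(R4C3) the total charge that $X_0$ distributes to its vertices equals $2t_0+\tfrac{5}{3} t_1+\tfrac{3}{2} t_2+\tfrac{4}{3} t_3+\tfrac{7}{6} t_4+t_5$. Using $\mu(X_0)=2|X_0|-3$ together with the partition identity above,
\begin{align*}
\mu'(X_0)&=2|X_0|-3-2t_0-\tfrac{5}{3} t_1-\tfrac{3}{2} t_2-\tfrac{4}{3} t_3-\tfrac{7}{6} t_4-t_5\\
&=\tfrac{1}{3} t_1+\tfrac{1}{2} t_2+\tfrac{2}{3} t_3+\tfrac{5}{6} t_4+t_5-3\\
&=\tfrac{1}{6}\bigl(2t_1+3t_2+4t_3+5t_4+6t_5\bigr)-3\ \ge\ 0,
\end{align*}
a contradiction. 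Since the $t_i$ are integers, it follows that $2t_1+3t_2+4t_3+5t_4+6t_5\le 17$.

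I expect no real obstacle in this claim: the argument is essentially a one-line computation once the partition of $X_0$ and the charges handed out by each rule are written down correctly. The only place to be careful is the bookkeeping, i.e.\ checking that the coefficients $2-\tfrac{5}{3},\,2-\tfrac{3}{2},\,2-\tfrac{4}{3},\,2-\tfrac{7}{6},\,2-1$ produced by subtracting the discharged amounts from $2|X_0|$ are exactly the ones that, after clearing the common denominator $6$, yield the left-hand side $2t_1+3t_2+4t_3+5t_4+6t_5$. The substantive work of the section comes afterwards, in the claims that exploit this inequality to constrain the structure of $G-X_0$.
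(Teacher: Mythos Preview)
Your proof is correct and follows essentially the same approach as the paper: compute $\mu'(X_0)$ in terms of the $t_i$ using the partition $|X_0|=\sum_{i=0}^5 t_i$ and the (R4) charges, then observe that $\mu'(X_0)<0$ forces $2t_1+3t_2+4t_3+5t_4+6t_5<18$. The paper's version is a direct one-line computation rather than an argument by contradiction, but the content is identical.
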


\begin{proof}
By (R4),
$0>\mu'(X_0)=(2|X_0|-3)-2t_0- \frac{5}{3}t_1 - \frac{3}{2}t_2 -\frac{4}{3}t_3- \frac76t_4- t_5= \frac13t_1+\frac12t_2+\frac23t_3+\frac56t_4+t_5-3.$
It follows that $2t_1+3t_2+4t_3+5t_4+6t_5<18$, so the inequality in the claim.
\end{proof}

Notice that $V(G)-X_0\neq \emptyset$, for if $X_0 = V(G)$, then $\{V, X_m\}\subseteq\mathcal{X}$, which implies that $\sum_{X\in\mathcal{X}} \mu(X) \ge (2|V| -3) + (2|X_m|-3)= 2|V|-2$, contradicting (\ref{39mu}).

\begin{claim}\label{c6}
$V(G)-X_0$ is an independent set.

\end{claim}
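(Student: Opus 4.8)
\textbf{Proof proposal for Claim~\ref{c6}.} I would argue by contradiction, following the pattern of its analogue in the previous section, Claim~\ref{46c4}, but with the six classes $V_0,\dots,V_5$ and the bound $9$ in place of $6$. Suppose $V(G)-X_0$ contains an edge $e'$. Every estimate is powered by Claim~\ref{c4}: bounding each coefficient in $2t_1+3t_2+4t_3+5t_4+6t_5\le 17$ from below by $2$ gives $\sum_{i=1}^{5}t_i\le 8$, and suitable reweightings of the same inequality give $t_1+t_2+2t_3+2t_4+t_5\le 8$ and $t_2+t_3+t_4+t_5\le 5$. In particular $|X_0\setminus V_0|\le 8$ and $|(V_1'\cup V_3'\cup V_4')\cup V_2\cup V_5|\le 8$, both strictly below $9$.

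The first reduction is that $V_0$ is independent (or empty): if $V_0$ spanned an edge, deleting $S:=X_0\setminus V_0$ would leave that edge in one component and $e'$ in another (every edge out of $V_0$ enters $S$), so $S$ would be an essential cut of size at most $8$, impossible since $G$ is essentially $9$-connected. The core of the argument is then to show $V(G)-X_0$ is the union of $W:=V_1'\cup V_3'\cup V_4'$ with an independent set. If some edge of $G-X_0$ avoided $W$, then, provided $V_0\cup V_1\cup V_3\cup V_4$ spans an edge, the set $W\cup V_2\cup V_5$ would separate it from that edge, an essential cut of size $<9$. So one is pushed into the case that $V_0\cup V_1\cup V_3\cup V_4$ is independent; there each of its vertices has all of its $\ge 3$ neighbours inside $V_2\cup V_5$ together with at most two vertices outside $X_0$, and with $t_2+t_3+t_4+t_5\le 5$ and Claim~\ref{c1} this pins $|X_0|$, hence $|V(G)|$, down so tightly that counting $E(G)$ against (\ref{39mu}) --- in the style of the $|X|=4$ subcase of Claim~\ref{order-4} --- is contradictory.

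With $V(G)-X_0$ now a union of $W$ and an independent set $I$, every vertex of $I$ having $\ge 3$ neighbours in $X_0\cup W$, and $e'$ meeting $W$ so $|W|\ge 2$, I would finish as in the endgame of Claim~\ref{46c4}: split on $V_0=\emptyset$ versus $V_0\ne\emptyset$, and in each branch use the minimum-degree bound $d(x)\ge 3$ --- strengthened to $d(x)\ge 6$ whenever $x$ has a $3$-neighbour, by Claim~\ref{c1} --- together with Claim~\ref{c4} to reduce to a short list of admissible tuples $(t_0,\dots,t_5)$ with a bound on $|W|$. For each, a pigeonhole count on the at most $t_1+2t_3+2t_4$ edges from $V_1\cup V_3\cup V_4$ to $W$ locates a vertex of $W$ whose neighbourhood inside $X_0$ is essentially a single vertex $x\in V_1\cup V_3\cup V_4$, and ``swapping'' that $W$-vertex out of a cut and $x$ into it produces an essential cut of size $<9$ separating two nontrivial sides; the configurations in which no such swap works are forced down to $|V(G)-X_0|\le 2$, contradicting essential $9$-connectedness or (\ref{39mu}) directly.

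I expect the last paragraph to be the real obstacle. With three ``cover-edge'' classes instead of the single one in the $4$-connected case, ``reduce to a short list of tuples'' is itself a sizeable case check --- on which of $t_0,t_2,t_3,t_4,t_5$ vanish, on $|W|$, and on whether some vertex of $X_0$ has a $3$-neighbour --- and in every branch one must verify that the swap (and the independence of $I$) really yield a cut with two \emph{nontrivial} sides rather than merely a separator. The slack in Claim~\ref{c4} is only barely enough to hold every cut below $9$, so the inequality has to be applied with exactly the right weights in each branch; lining up the discharging rules with the threshold $9$ in every corner, and handling the degenerate case where $V_0\cup V_1\cup V_3\cup V_4$ is independent, is where the delicacy lies.
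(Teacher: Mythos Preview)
Your outline follows the template of Claim~\ref{46c4} faithfully, and the opening moves (independence of $V_0$, the cut $W\cup V_2\cup V_5$ with $|W\cup V_2\cup V_5|\le 8$) are sound. But the paper's proof is organized very differently, and the difference is not cosmetic: instead of carrying all six classes into a final case analysis, the paper first proves, under the standing assumption that $V(G)-X_0$ is not independent, a chain of subclaims showing $e(V_0,V_1\cup V_3)=0$, then $t_1=0$, then $t_3=0$, then $t_5\le 1$, then $t_4=0$. Only after these eliminations does the situation collapse to essentially the two-class picture $(V_2,V_5)$ with $t_2+2t_5\le 5$, and the endgame then really is parallel to Claim~\ref{46c4}. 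What this buys is exactly the avoidance of the ``sizeable case check'' you flag: rather than enumerate admissible tuples $(t_0,\dots,t_5)$, the paper kills three of the coordinates outright.

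There is also a genuine gap in your plan. The subclaim $t_5\le 1$ in the paper is \emph{not} proved by a cut/swap argument: the configurations with $t_5=2$ are so constrained that the paper instead pins down $V(G)$ explicitly and then \emph{constructs} a spanning Laman subgraph of $G-e$ for every $e$ (starting from a $K_{3,3}$, a $K_{2,3}+e'$, or a $K_3$ and attaching the remaining vertices two edges at a time), contradicting $G$ being a counterexample. Your proposal anticipates only cut-production and edge-counting against (\ref{39mu}); in the $t_5=2$ corner those tools are not enough, and a direct rigidity construction is needed. Similarly, your handling of the degenerate case ``$V_0\cup V_1\cup V_3\cup V_4$ independent'' via a loose count against (\ref{39mu}) is optimistic: vertices of $V_1\cup V_3\cup V_4$ have degree $\ge 6$ by Claim~\ref{c1}, so their neighbourhood constraints in $V_2\cup V_5$ interact with Claim~\ref{c4} in a way that does not immediately contradict (\ref{39mu}) without first forcing several $t_i$ to vanish --- which is precisely what the paper's subclaims accomplish.
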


\begin{proof}
Assume on the contrary that $V(G)-X_0$ is not an independent set. We will reach a contradiction by a series of claims. 

\medskip
(c0) $V_0$ is independent if $V_0\not=\emptyset$.

\begin{proof}[Proof of (c0)]
For otherwise, $X_0-V_0$ is an essential cut of order at most $8$, by Claim~\ref{c4}.
\end{proof}

(c1) $e(V_0, V_1\cup V_3)=0$.

\begin{proof}[Proof of (c1)]
Let $e=uv\in E(G)$ with $u\in V_0$ and $v\in V_1\cup V_3$.

Assume that $v\in V_1$ and $vw\in E(G)$ with $w\in V_1'$.  By definition $d(w)=3$ and by Claim~\ref{c1}, $d(v)\ge 6$.  Since $X_0-V_0-v+w$ is a cut of order $\sum_{i=1}^5 t_i\le 8$ and is not an essential cut, all edges in $E(G-X_0)$ must be incident with $w$. Let $ww'\in E(G)$ with $w'\not\in X_0$. By Claim~\ref{c1}, $d(w')\ge 6$.  Since $ww'$ is the only edge incident with $w'$ that is in $E(G-X_0)$, all other neighbors of $w'$ are in $V_2\cup V_4\cup V_5$. In particular, $t_2+t_4+t_5\ge 5$.  By Claim~\ref{c4}, $t_1=1, t_2=5$ and $t_3=t_4=t_5=0$, and the only neighbor of vertices of $V_2$ in $V(G)-X_0$ is $w'$.  Therefore, $w$ has no neighbor other than $v$ and $w'$, a contradiction to $d(w)=3$.

Let $v\in V_3$ and $vw_1, vw_2\in E(G)$ with $w_1, w_2\in V_3'$.  By definition $d(w_1)=d(w_2)=3$, and by Claim~\ref{c1}, $d(v)\ge 6$.  Note that $X_0\cup \{w_1,w_2\}-V_0-v$ is a cut, whose order is at most $(t_1+t_2+t_3+t_4+t_5)+1\le \frac{1}{2}(17-2t_3)+1<9$,  and is not an essential cut, all edges in $E(G-X_0)$ must be incident with $w_1$ or $w_2$. Let $w_1w'\in E(G)$ with $w'\not\in X_0$.  By Claim~\ref{c1}, $d(w')\ge 6$.  Since $w'w_1, w'w_2$ are the only potential edges incident with $w'$ that are in $E(G-X_0)$, all other neighbors of $w'$ are in $V_2\cup V_4\cup V_5$. In particular, $t_2+t_4+t_5\ge 4$.  By Claim~\ref{c4}, $t_3=1, t_2=4$ and $t_1=t_4=t_5=0$, and the only neighbor of vertices of $V_2$  in $V(G)-X_0$ is $w'$.  Therefore, $w_1, w_2$ have no neighbor other than $v$ and $w'$, a contradiction to $d(w_1)=d(w_2)=3$.
\end{proof}



(c2) $t_1=0$.

\begin{proof}[Proof of (c2)] Let $v_1\in V_1$. Then $d(v_1)\ge 6$ and $v_1$ is adjacent to a $3$-vertex $u_1\in V'_1$.

We claim that $v_1$ has no neighbors in $V_1$. For otherwise, let $v_1v_1'\in E(G)$ with $v_1'\in V_1$. Let $v_1'u_1'\in E(G)$ with $u_1'\in V_1'$. By definition, $d(u_1)=d(u_1')=3$. Then $X_0-V_0-\{v_1, v_1'\}+\{u_1, u_1'\}$ is a cut of order at most $\sum_{i=1}^5 t_i\le 8$ and is not an essential cut. It follows that all edges in $E(G-X_0)$ are incident with $u_1$ or $u_1'$. Let $u_1u\in E(G)$ with $u\not\in X_0$. By Claim~\ref{c1}, $d(u)\ge 6$, and $N(u)\subseteq \{u_1, u_1'\}\cup V_2\cup V_4\cup V_5$. It follows that $t_2+t_4+t_5\ge 4$. By Claim~\ref{c4}, $t_1=2, t_2=4$ and $t_3=t_4=t_5=0$. In particular, the only neighbor of vertices of $V_2$ in $V(G)-X_0$ is $u$.  Then $u_1, u_1'$ have no neighbors other than $v_1, v_1'$ and $u$.  So $u_1=u_1'$, but then $u$ should have $5$ neighbors in $V_2$, a contradiction again.

From above and (c1), $N(v_1)\subseteq X_0-V_0-V_1$. As $d(v_1)\ge6$, $t_2+t_3+t_4+t_5\ge 5$. By Claim \ref{c4}, we have that $t_1=1,t_2=5,t_3=t_4=t_5=0$ and $v_1$ is adjacent to every vertex in $V_2$. Let $N(u_1)=\{v_1,w_1,w_2\}$.  By definition and the minimality of $\sum_{i=1}^{m-1}(2|X_i|-3)$, $w_1,w_2\not\in V_2$ thus $w_1, w_2\in V(G)-X_0$. By Claim \ref{c1}, $d(w_i)\ge6$ for $i\in[2]$ . It follows that $w_1$ or $w_2$ (say $w_1$) is adjacent to at most two vertices in $V_2$. Let $v_2\in V_2$ be adjacent to $u_2\in V_2'$ and $v_2w_1\notin E(G)$. As $d(w_1)\ge6$, $w_1$ must be adjacent to a vertex in $V(G)-X_0-\{u_1,w_1,u_2\}$. So $V_2\cup\{u_1,u_2\}-v_2$ is an essential $6$-cut, a contradiction.
\end{proof}

(c3) $t_3=0$.

\begin{proof}[Proof of (c3)]
For otherwise, let $v_1\in V_3$ such that $u_1v_1,u_2v_1\in E(G)$ for 3-vertices $u_1,u_2\in V'_3$.  By Claim~\ref{c1}, $d(v_1)\ge 6$. By (c1) and (c2), $4\le e(v_1, X_0)\le t_2+(t_3-1)+t_4+t_5$. From Claim~\ref{c4}, $t_4=t_5=0$, and $3t_2+4t_3\le 17$. So $(t_2, t_3)\in \{(3,2), (4,1)\}$ and $v_1$ is adjacent to all vertices in $V_2\cup V_3-v_1$.

By definition and the minimality of $\sum_{i=1}^{m-1}(2|X_i|-3)$, $u_1$ and $u_2$ have no neighbor in $V_2$. Let $W=(N(u_1)\cup N(u_2))\cap(V(G)-X_0)$. If $W=\{w_1\}$, then $t_2=3,t_3=2$ and the neighbors of vertices of $V_3$ are $u_1$ and $u_2$. Since $d(w_1)\ge6$, $w_1$ has a neighbor in $V(G)-X_0-\{u_1,u_2\}$. Then $V_2\cup\{u_1,u_2\}$ is an essential 5-cut, a contradiction. So we may assume that $\{w_1,w_2\}\subseteq W$. As $t_2\le4$, some vertex in $W$, say $w_1$,  is adjacent to at most two vertices in $V_2$. Let $v_2\in V_2$ be not adjacent to $w_1$ and $v_2u'\in E(G)$ for $u'\in V'_2$. As $d(w_1)\ge6$, $w_1$ must have a neighbor in $V(G)-X_0-\{u_1,u_2,u'\}$. Therefore, $V_2\cup V_3\cup \{u_1,u_2,u'\}-\{v_1,v_2\}$ is an essential 6-cut, a contradiction.
\end{proof}

(c4) $t_5<2$.

\begin{proof}[Proof of (c4)]
Assume that $t_5\ge 2$. By Claim~\ref{c4}, $t_5=2$ and $t_2+t_4\le 1$.  As $|X_0|\ge 5$, $t_0\ge 2$ and by (c0), $t_2+t_4=1$ and each vertex in $V_0$ is adjacent to all three vertices in $V_2\cup V_4\cup V_5$.    Let $V_0=\{v_1,v_2,\dots\}, V_2\cup V_4=\{u_1\}, V_5=\{u_2,u_3\}$.   Now $\{u_2, u_3\}\cup (N(u_1)-V_0)$ is a $4^-$-cut and the component $V_0\cup \{u_1\}$ contains edges, $V(G)-X_0-N(u_1)$ must be an independent set.  When $u_1\in V_4$, let $V_4'=N(u_1)\cap (V(G)-X_0)=\{u, u'\}$ with $d(u')=3$.  Note that $N(u')\subseteq V_4\cup V_5\cup u$, for otherwise, the vertex $w\in N(w_1)-X_0-u$ has degree at least $6$, thus $w$ must have a neighbor in $V(G)-X_0-N(u_1)$. It follows that $\{u,w_1, u_2, u_3\}$ is an essential cut of size $4$, a contradiction.  When $u_1\in V_2$, we let $V_2'=\{u\}$.  Since $\{u, u_2, u_3\}$ is a cut and the component containing $V_0$ contains edges, all edges in $V(G)-X_0$ must be adjacent to $u$.   Let $W=V(G)-X_0-N(u_1)=\{w_1,w_2,\dots, w_k\}$ such that $N(w_i)=\{u, u_2, u_3\}$ for $i\ge 1$. We will find a spanning minimally rigid subgraph $H$ in $G-e$ for every $e\in E(G)$.

\begin{itemize}
\item $t_0=2$. We claim that there exists either a $K_{3,3}$ or a $K_3$ in $G-e$. Note that the subgraph induced by $\{u, u_2, u_3\}\cup W$ contains a $K_{3, k}$.
    If $u_1\in V_2$, then $k\ge4$ since $|V(G)|\ge 10$. So there exists a $K_{3,3}$ in $G-e$. If $u_1\in V_4$, then $k\ge 3$. So there exists a $K_{3,3}$ in $G-e$ unless $k=3$ and $e\in E[\{u, u_2, u_3\},W]$. Next, we consider the case that $k=3$ and $e\in E[\{u, u_2, u_3\},W]$. If $N(u')\subseteq V_4\cup V_5$, then $u'$ is adjacent to $u_1,u_2$ and $u_3$. So the subgraph induced by $\{u_1, u_2, u_3\}\cup \{v_1, v_2,u'\}$ contains a $K_{3,3}$. If $u'$ is adjacent to $u$, then we assume that $N(u')=\{u_1,u_2,u\}$. In $G-e$, there exists a triangle $u_1u'u$.

If there exists a $K_{3,3}$ in $G-e$, then we choose a $K_{3,3}$ from $G-e$. Then we can easily add each of the rest vertices one by one by joining it with two vertices in the already-formed subgraph to obtain a spanning subgraph $H$ in $G-e$.  It is clear that $e(H)=3\cdot3+2(n-6)=2n-3$,  and for any subgraph $H'$ of $H$, let $H^0=H'\cap K_{3,3}$, then
$$e(H')\le2n(H'-H^0)+e(H^0)\le 2n(H'-H^0)+2n(H^0)-3=2n-3.$$

If there exists a $K_3$ in $G-e$, then we choose a $K_3$ from $G-e$. Then add $u_2$ and then easily add each of the rest vertices one by one by joining it with two vertices in the already-formed subgraph to obtain a spanning subgraph $H$ in $G-e$, since $u_1$ ia adjacent to all vertices of $X_0$. It is clear that $e(H)=3+2(n-3)=2n-3$,  and for any subgraph $H'$ of $H$, let $H^0=H'\cap K_{3}$, then
$$e(H')\le2n(H'-H^0)+e(H^0)\le 2n(H'-H^0)+2n(H^0)-3=2n-3.$$
\item $t_0\ge 3$. Note that every vertex in $V_0$ is adjacent to $u_1,u_2$ and $u_3$ in $G$. So there exists a $K_{3,3}$ in $G$. Now we claim that there exists either a $K_{3,3}$ or a $K_{2,3}+e'$ in $G-e$. Clearly, there exists a $K_{3,3}$ in $G-e$ unless $t_0=3$ and $e\in E[\{u_1, u_2, u_3\}, V_0]$. Next, we consider the case that $t_0=3$ and $e\in E[\{u_1, u_2, u_3\}, V_0]$. If $k\ge3$, then the subgraph induced by $\{u, u_2, u_3\}\cup \{w_1, \ldots, w_k\}$ contains a $K_{3,3}$ in $G-e$. If $k\le2$, then $u_1\in V_4$ and $k=2$ since $t_0=3$ and $|V(G)|\ge 10$. In this case, if $N(u')\subseteq V_4\cup V_5$, then $u'$ is adjacent to $u_1,u_2$ and $u_3$. So the subgraph induced by $\{u_1, u_2, u_3\}\cup \{v_1, v_2,v_3,u'\}$ contains a $K_{3,3}$. If $u'$ is adjacent to $u$, then $u$ is adjacent to $u_2$ since $u$ is a $6^+$-vertex. So the subgraph induced by $\{u, u_2, u_3\}\cup \{u_1, u_2\}$ contains a $K_{2,3}+e'$.


If there exists a $K_{3,3}$ in $G-e$, then we choose a $K_{3,3}$ from $G-e$. Then add each of the rest vertices one by one by joining it with two vertices in the already-formed subgraph to obtain a spanning subgraph $H$. Again,  $e(H)=3\cdot3+2(n-6)=2n-3$, and for any subgraph $H'$ of $H$, let $H^0=H'\cap K_{3,3}$, then
$$e(H')\le2n(H'-H^0)+e(H^0)\le 2n(H'-H^0)+2n(H^0)-3=2n-3.$$

 If there exists a $K_{2,3}+e'$ in $G-e$, then we choose a $K_{2,3}+e'$ from $G-e$. Then add each of the rest vertices one by one by joining it with two vertices in the already-formed subgraph to obtain a spanning subgraph $H$. Again,  $e(H)=3\cdot2+1+2(n-5)=2n-3$, and for any subgraph $H'$ of $H$, let $H^0=H'\cap K_{2,3}+e'$, then
$$e(H')\le2n(H'-H^0)+e(H^0)\le 2n(H'-H^0)+2n(H^0)-3=2n-3.$$
\end{itemize}
Therefore $G$ contains a spanning minimally rigid subgraph in all cases, a contradiction to the assumption that $G$ is a counterexample.
\end{proof}

(c5) $t_4=0$.

\begin{proof}[Proof of (c5)]
For otherwise, let $v_1\in V_4$ such that $u_1v_1,u_2v_1\in E(G)$ for vertices $u_1,u_2\in V'_4$ such that $d(u_1)=3$.

Let $N(u_1)=\{v_1, w_1, w_2\}$ such that $w_1\not=u_2$. By Claim~\ref{c1}, $d(v_1), d(w_1),d(w_2)\ge 6$.  Assume that $w_1\in X_0$.  Then $w_1\in V_4\cup V_5$ and thus $t_4+t_5\ge2$. By Claim \ref{c4}, $|X_0-V_0|=t_2+t_4+t_5\le4$. It follows that $v_1$ has a neighbor $v$ in $V_0$. Note that $(X_0-V_0-v_1)\cup\{u_1,u_2\}$ is a cut of order at most 5 and is not an essential cut. If $w_2\in V(G)-X_0-u_2$, then all but $w_2u_1$ edges in $E(G-X_0)$ must be incident with $u_2$. If $w_2\notin V(G)-X_0-u_2$, then all edges in $E(G-X_0)$ must be incident with $u_2$. Let $|V(G)-X_0|=t$. Then $t\ge 2$ and $e(X_0, V(G)-X_0)\ge\min\{2(t-3)+(6-2)+[d(u_2)-(t-2)]+2,2(t-1)+[d(u_2)-(t-1)]\}\ge 6+(t-1)=t+5$. On the other hand, $e(X_0, V(G)-X_0)\le t_2+2t_4+t\cdot t_5$.  It follows that $5+t(1-t_5)\le t_2+2t_4\le \frac{17-6t_5-0.5t_2}{2.5}$.  So $2.5t(1-t_5)+6t_5+0.5t_2\le 4.5$, which is impossible as $t\ge 2$ and $t_5\le 1$ by (c4).

So we may assume that $w_1\in V(G)-X_0$. As $t_4\ge1$, we have that $|X_0-V_0|=t_2+t_4+t_5\le5$ by Claim \ref{c4}.  Assume that $v\in V_0$ is a neighbor of $v_1$. Then $C=(X_0-V_0-v_1)\cup \{u_1,u_2\}$ is a cut of order at most 6 and is not an essential cut. Therefore, $w_1$ has no neighbor in $V(G)-X_0-\{u_1,u_2\}$. As $d(w_1)\ge6$, it implies that $t_2=4,t_4=1,t_5=0$ and $w_1$ is adjacent to $\{u_1,u_2\}\cup V_2$. It follows that $w_2$ (which may be $u_2$) has at least three neighbors in $V(G)-X_0-\{u_1,u_2,w_1\}$ from $d(w_2)\ge6$. So there must exist an edge in $E(G-X_0-\{u_1,u_2\})$ and $C$ is an essential cut, a contradiction. Thus, $N(v_1)\cap X_0\subseteq X_0-V_0$. As $d(v_1)\ge6$, $|X_0-V_0|=5$ with $t_2=4,t_4=1,t_5=0$ by Claim \ref{c4} and $v_1$ is adjacent to all vertices in $V_2$. Suppose that $w_1$ is not adjacent to at least two vertices in $V_2$. Let $v_2\in V_2$ be not adjacent to $w_1$ and $v_2u'\in E(G)$ for $u'\in V'_2$. Then $w_1$ has a neighbor in $V(G)-X_0-\{u_1,u_2,u'\}$. Thus, $(X_0-V_0-\{v_1,v_2\})\cup\{u_1,u_2, u'\}$ is an essential cut of order at most 6, a contradiction. So $w_1$ is not adjacent to at most one vertex in $V_2$. As $d(w_2)\ge6$, $w_2$ has at least two neighbors $z_1,z_2$ in $V(G)-X_0-\{u_1,w_2,w_1\}$ and there must be at least one vertex in $\{z_1,z_2\}$, say $z_1$, such that $z_1$ has no neighbor in $V_2$. Let $v\in V_2$ be adjacent to $w_1$. Then $z_1$ has a neighbor in $V(G)-X_0-\{u_1,w_2,w_1\}$ and $(X_0-V_0-\{v_1,v\})\cup\{u_1,w_2,w_1\}$ is an essential cut of order at most 6, a contradiction.
\end{proof}

From (c1)-(c5) and Claim~\ref{c4}, $3t_2+6t_5\le 17$. So $t_2+2t_5\le 5$.

Assume that $t_0=0$. Then $t_2=5$ and $t_5=0$, since $|X_0|\ge 5$. As each vertex in $V_2$ has exactly one neighbor in $V(G)-X_0$, each vertex in $V_2$ has degree at most $5$, in particular by Claim~\ref{c1}, no vertex has degree three.   Assume first that $|V_2'|\le 2$. Then $V(G)=X_0\cup V_2'$, for otherwise, $V_2'$ is a cut of order at most two.  As $V(G)-X_0$ is not independent, $V_2'$ contains two vertices that are adjacent. Then one of them must have degree $3$, but then the other must have degree at least $6$, which is impossible. So $|V_2'|\ge 3$.  Then $V_2'$ contains vertices with exactly one neighbor in $V_2$, say $x_1', \ldots, x_s'$ for some $s\ge 1$. For each $i\in [s]$, let $x_ix_i'\in E(G)$ with $x_i\in V_2$. For each $x_i'$, if $x_i'$ is adjacent to a vertex in $V(G)-X_0-V_2'$, then $V_2'-x_i'+x_i$ is an essential $5$-cut, a contradiction. Thus all neighbors of $x_i'$ are in $V_2\cup V_2'$, in particular $V(G)=X_0\cup V_2'$.  Since $d(x_i)\le 5$, $d(x_i')\ge 4$, which implies that $|V_2'|\ge 4$ and thus $s\ge 4$. We may assume that $x_1'x_2'\in E(G)$, as $d(x_i')\ge 4$.  Then $(V_2'-\{x_1,x_2'\})\cup \{x_1, x_2\}$ is an essential cut of order at most $5$, since $x_1'x_2'\in E(G)$ and $x_3$ has a neighbor in $V_2-\{x_1, x_2\}$, a contradiction.

Now let $V_0\not=\emptyset$.  Then $t_2+t_5\geq3$ since the neighbors of $V_0$ are in $V_2\cup V_5$. By Claim~\ref{c4} and (c4), $3\le t_2\le 5, t_5=0$, or $2\le t_2\le3, t_5=1$. Let $v\in V_2$ with neighbor $v'\in V_2'$ such that $v$ has a neighbor in $V_0$. Then  $C=V_2\cup V_5-v+v'$ is a set of size at most $5$ such that the component containing $v$ contains an edge. Since $G$ is essentially 9-connected, the other components must be trivial and all edges in $V(G)-X_0$ must be adjacent to $v'$. We assume $|V(G)-X_0|=t$. So we have that $$\sum_{v\in V(G)-X_0}d(v)\le t_2+2(t-1)+tt_5=(2+t_5)t+t_2-2$$ for $t\ge2$. By Claim \ref{c1}, $\sum_{v\in V(G)-X_0}d(v)\ge \min\{3t+3,4t\}$. A simple computation gives $t_2=1,t_5=2$, a contradiction to (c4).
\end{proof}

By Claim~\ref{c6}, assume that $V(G)-X_0$ consists of $t\ge 1$ isolated vertices.
Since $G$ is 3-connected, each of these $t$ vertices has degree at least 3, and thus $|E[G-X_0, V_1\cup V_2\cup V_3\cup V_4]|\ge (3-t_5)t$. On the other hand, by definitions of $V_i$ for $1\le i\le 5$ and Claim~\ref{c4}, we have
$$(3-t_5)t\le |E[G-X_0, V_1\cup V_2\cup V_3\cup V_4]|\le t_1 + t_2 + 2 t_3+2t_4\le t_1 + \frac32 t_2 + 2 t_3+2t_4\le \frac12(17-6t_5).$$
Therefore,  $t\le \frac{17-6t_5}{6-2t_5}<3$ and  $V(G)-X_0$ contains at most two isolated vertices.  It follows that $3t\le |E[X_0, G-X_0]|\le t_1 + t_2 + 2 t_3+2t_4+2t_5$. Therefore, $2t_1+3t_2+4t_3+5t_4+6t_5\ge 6t$, and $$\mu'(X_0)=(2|X_0|-3)-2t_0- \frac{5}{3}t_1 - \frac{3}{2}t_2 -\frac{4}{3}t_3-\frac76t_4- t_5=-3+\frac{1}{6}(2t_1 +3t_2 + 4t_3+5t_4 + 6t_5)\ge -3+t\ge -2.$$

We may claim that for each $y\in G-X_0$ and $X\in \mathcal{X}(y)$, $|X|\le 4$. Suppose otherwise that $X\in \mathcal{X}(y)$ with $|X|\ge 5$.  Note that $X$ only needs to cover all edges in $E[X\cap X_0, X\cap (G-X_0)]$.  On the other hand, each edge in $E[X\cap X_0, X\cap (G-X_0)]$ may be covered with a set of order two, that is, edges in $E[X\cap X_0, X\cap (G-X_0)]$ may be covered with at most $e(X\cap X_0, X\cap (G-X_0))$ sets of order two. It follows from the minimality of $\sum_{i=1}^{m-1}(2|X_i|-3)$ that $e(X\cap X_0, X\cap (G-X_0))\ge 2|X|-3\ge 7$.  As $|X|\ge 5$, $X\cap (V_1\cup V_3)=\emptyset$.  Also, $X\cap (G-X_0)$ contains at most two vertices.  Therefore, $$7\le e(X\cap X_0, X\cap (G-X_0))\le t_2+2t_4+2t_5.$$  From Claim~\ref{c4}, $3t_2+5t_4+6t_5\le 17$, so $t_4+t_5\le 3$.  Furthermore, if $t_4+t_5=3$ then $t_2=0$, if $t_4+t_5=2$ then $t_2\le 2$,  if $t_4+t_5= 1$ then $t_2\le 4$, and if $t_4+t_5=0$ then $t_2\le 5$. In any case, $t_2+2(t_4+t_5)\le 6$, a contradiction.



Therefore, $\mu'(X_0)\ge -2$ and by Claim~\ref{order-4} for every $X\in \mathcal{X}-X_0$, $\mu'(X)\ge 0$.  It follows that
$$2|V|-2>\sum_{X\in \mathcal{X}} \mu(X)=\sum_{X\in \mathcal{X}} \mu'(X)+\sum_{x\in V(G)}\mu(x)\ge -2+0+2|V|\ge 2|V|-2,$$
a contradiction, which completes the proof.

\section{Non-rigid graphs that are $3$-connected and essentially $t$-connected for $t\le 8$.}

In this section, we will construct $3$-connected essentially $t$-connected non-rigid graphs for each $t\le 8$.  We start with a $3$-connected bipartite $T=(X,Y)$ such that each $x\in X$ is a $3$-vertex and each vertex $y\in Y$ is a  $t$-vertex. Then $G$ is obtained from $T$ by replacing each $y\in Y$ with $K_t$ and making vertices in $N_T(y)$ adjacent to different vertices of the $K_t$.   See Figure~\ref{fig1} for the $3$-connected essentially $t$-connected for $t\in \{6,7\}$ graphs that are not rigid.

\begin{figure}[ht]
\includegraphics[scale=0.45]{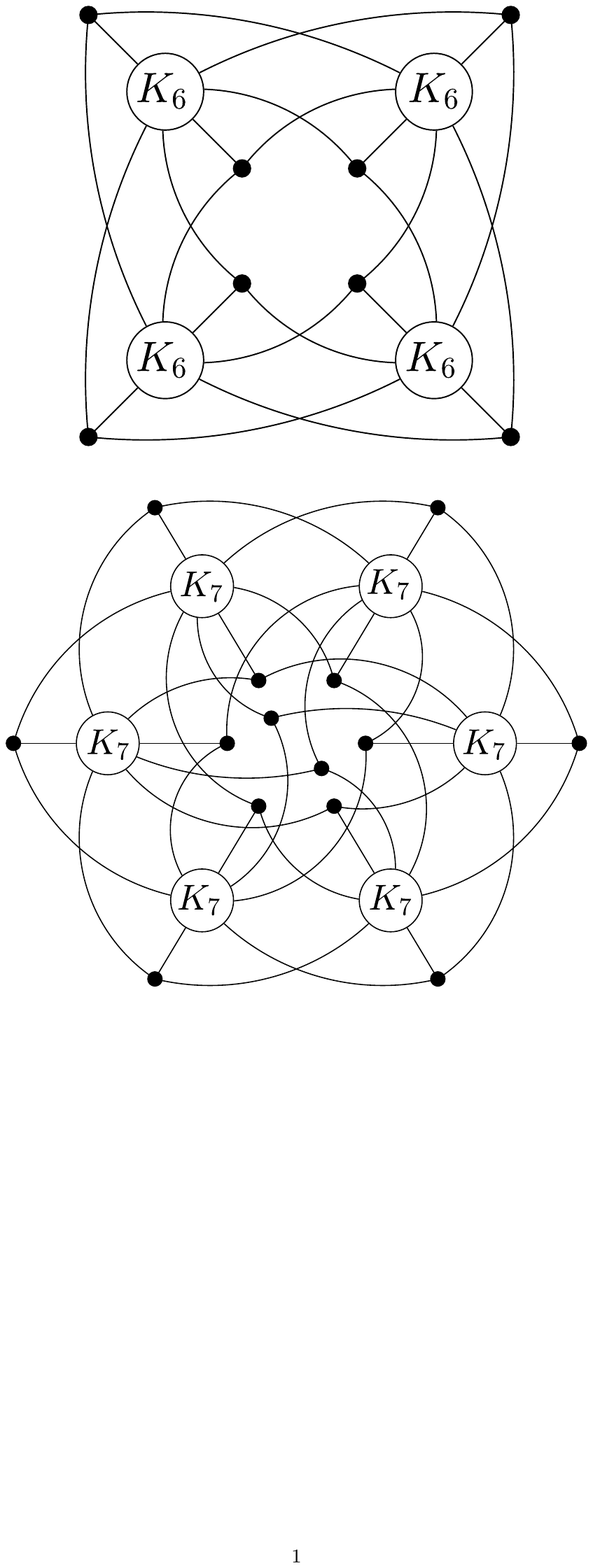}
\includegraphics[scale=0.4]{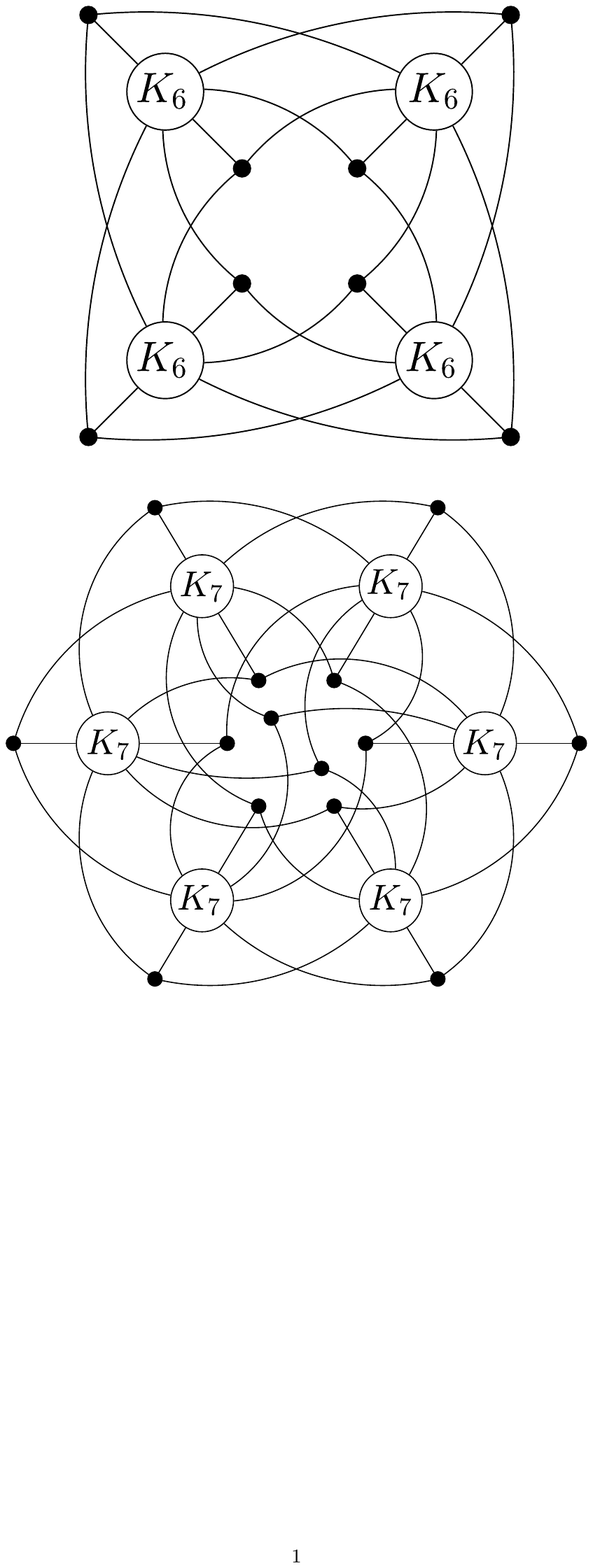}
\caption{$3$-connected essential $t$-connected non-rigid graphs for $t=6,7$}
\label{fig1}
\end{figure}

Let $n_3$ and $n_t$ denote the number of 3-vertex and $t$-vertex in $T$, respectively. Note that $3n_3=tn_t$, $n_3+tn_t=n(G)$ and it follows that $2n(G)-3=2\cdot 4n_3-3=8n_3-3$. Let $G'$ be a spanning subgraph of $G$. Then $G'$ contains at most $(2t-3)$ edges from each of the $t$-cliques. So when $t\le 8$ and $n>96$,
$$|E(G')|\leq3n_3 + n_t\cdot (2t-3)=9n_3-\frac9t n_3=\frac94(1-\frac1t)n(G')<2n(G')-3.$$ Then $G$ does not contain a spanning minimally rigid subgraph, that is, $G$ is not rigid.
	
Next, we will show that the graph $G$ is 3-connected and essentially $t$-connected. It is easy to see that $G-u-v$ is connected for all $u,v\in V(G)$. Let $S\in V(G)$ with $|S|\leq t-1$. Note that any vertex of $K_t$ is a $t$-vertex and adjacent to $t-1$ vertices of degree $t$ and one 3-vertex. Thus, there must be a $t$-vertex $u$ of any $K_t$ is adjacent to a 3-vertex denoted by $x$ in $G-S$. And then either $u$ is adjacent to other 3-vertices (except $x$) or $x$ is adjacent to other $t$-vertices (except $u$). We can extend this nontrivial component until there are only isolated vertices in $G-S$. So we have that $G$ is 3-connected and essentially $t$-connected.

\section{Appendix: complete of Claim~\ref{c1}}

{\bf Claim~\ref{c1}:}
In a 3-connected essentially 9-connected graph $G$, every $3$-vertex is only adjacent to $6^{+}$-vertices.

\smallskip

\begin{proof} Let $x\in V(G)$ be a $3$-vertex and $N(x)=\{v_1,v_2,v_3\}$. Suppose that $v_1$ is a $5^-$-vertex and $d(v_3), d(v_2)\ge d(v_1)$. We will simply use $d_i$ to denote $d(v_i)$ for i = 1,2,3, respectively. Let $N'(v_1)=N(v_1)-\{x,v_2,v_3\}$. Then let $S=\{v_2,v_3\}\cup N'(v_1)$ and $|S|=s$.  Note that $s\le 6$ and one component of $G-S$ contains edge $xv_1$. Let $U=\{x,v_1\}$. We also have $s\ge 3$, since $G$ is 3-connected. As $G$ has no essential cut of size less than $9$, the other components of $G-S$ must form an independent set, say $T=\{u_1, \ldots, u_t\}$.  Note that $N(u_i)\subseteq S$. We will find a spanning minimally rigid subgraph $H^\ast$ of $G-e$ for every $e\in E(G)$, which implies that $G$ is not a counterexample.

First of all, we claim that $s\ge4$. If not, then $s=3$ and $v_1$ has at least one neighbor in $\{v_2,v_3\}$ by $d(v_1)\ge3$. We know that all vertices in $T$ are adjacent to $S$ as $d(v)\ge3$ for $v\in V(G)$. Since $|V(G)|\ge10$, we have that $t\ge10-2-3=5$. It follows that there is a $K_{3,3}$ between $S$ and $T$ in $G-e$, for any $e\in E(G)$. We will construct $H^\ast$ from this $K_{3,3}$ by adding remaining vertices one by one and joining each vertex to its (any) two neighbors in current $H^\ast$ in $G-e$. Denote $K_{3,3}\cap T=\{u_1,u_2,u_3\}$.
Then $u_4,\ldots,u_t\in V(H^\ast)$, since $u_i$ has at least two neighbors in $K_{3,3}$ for $i\in\{4,\ldots,t\}$ in $G-e$. If $e$ is not incident with $x$, then we can add $x$ and $xv_2,xv_3$ to $H^\ast$;  and add $v_1$ and two incident edges other than $e$. If $e$ is incident with $x$, then we can add $v_1$ and two edges in $E[v_1,S]$ to $H^\ast$; and add $x$ and two incident edges incident other than $e$. Thus $H^\ast$ is a spanning subgraph of $G-e$ and
$|E(H^\ast)|=3\cdot3+2(n-6)=2n-3$. As $|E(K_{3,3})|=9=2|V(K_{3,3})|-3$, for any subgraph $H'$ of $H^\ast$ with $|H'|\ge2$ and $H^0=H'\cap K_{3,3}$,
then $$|E(H')|\le2|V(H'-H^0)|+|E(H^0)|\le2|V(H'-H^0)|+2|V(H^0)|-3=2|V(H')|-3,$$
which implies that $H^\ast$ is a spanning minimally rigid subgraph of $G-e$ and so $G$ is not a counterexample.
This completes the proof of the claim that $s\ge4$..

\bigskip
Let $T_0=N(v_2)\cap N(v_3)\cap T$ and $|T_0|=t_0$. 
For $s\ge4$, we take the spanning subgraph $H$ of $G$ that consists of the following types of edges.
\begin{description}
                            \item[type 1] all edges incident with $v_i$ in $G$, for $i\in[3]$.
                            \item[type 2] all edges between $S$ and $T$ in $G$.
                          \end{description}
We will prove several conclusions.

\begin{conclusion}\label{con1}
$|E(H)|\ge2n-2$.
\end{conclusion}

\begin{proof}
Suppose to the contrary that $|E(H)|\le 2n-3=2s+2t+1$. Recall that $4\le s\le 6$.
We first consider the case $s=4$. 
As $|V(G)|\ge10$, $t\ge10-2-4=4$. Note that $$|E_H[U,S]\cup E(U)|= d_1+2,$$ and $$2s+2t+1\ge|E(H)|\ge |E[S,T]|+|E_H[U,S]\cup E(U)|\ge3t+(d_1+2)\ge3t+5.$$ It implies that $d_1=3,t=4$ and $d(v)=3$ for any $v\in T$. It follows that $|E_H(S)|=0$ from the simple computation. Then any $v\in T$ is not adjacent to one vertex $v'\in S$ since $s=4$. If $v'\in N'(v_1)$, then consider $C=x\cup(N'(v_1)-v')\cup(T-v)$ and $|C|=5$. Since $E[\{v_2,v_3\},v']=\emptyset$, $\{v,v_2\}$ and $\{v_1,v'\}$ belong to different nontrivial components of $G-C$, a contradiction. If $v'\in \{v_2,v_3\}$, then consider $C=v_1\cup(\{v_2,v_3\}-v')\cap(T-v)$. Note that $|C|=5$ and $G-C$ contains two nontrivial components $\{x,v'\}$ and $v\cup N'(v_1)$ since $E[\{v_2,v_3\},N'(v_1)]=\emptyset$, a contradiction.

Next, we will show three useful facts before considering $5\le s\le 6$.
\begin{description}
\item[Fact 1] $t+|E_H(S)|\le s$.
\end{description}
\begin{proof}
Clearly, $|E_H[U,S]\cup E(U)|= d_1+2$ and $s\le d_1+1$.
According to types 1 and 2, we have that $2s+2t+1\ge|E(H)|\ge |E_H[U,S]\cup E(U)|+|E_H(S)|+3|T|=(d_1+2)+|E_H(S)|+3t.$
A direct computation shows that $t+|E_H(S)|\le 2s+2t+1-(d_1+2)-2t\le s.$
\end{proof}

\begin{description}
\item[Fact 2] There exists a vertex $v_i$ with $i\in\{2,3\}$ such that $|N(v_i)\cap N'(v_1)|\le2$.
\end{description}
\begin{proof}
Assume for a contradiction that both $v_2$ and $v_3$ have at least three neighbors in $N'(v_1)$, then $|E_H(S)|\ge6$. By Fact 1, we know that $|E_H(S)|=s=6$ and $t=0$. It implies that $|V(G)|=2+s+t=8$, a contradiction.
\end{proof}

\begin{description}
\item[Fact 3]  Suppose that $|V(G)|\le 12$. If $|N(v_i)\cap N'(v_1)|\le2$ and $v_iv_1\notin E(G)$ for some $i\in\{2,3\}$, then every vertex in $N(v_i)\cap T$ must be adjacent to all vertices in $N'(v_1)-N(v_i)$.
\end{description}
\begin{proof}
Without loss of generality we may assume that $i=2$. Suppose to the contrary that there is a vertex $u\in N(v_2)\cap T$ and a vertex $v\in N'(v_1)-N(v_2)$ such that $uv\notin E(G)$. Let $C=V(G)-\{v_1,v,u,v_2\}$. Note that $|C|\le12-4=8$ and $vv_1,uv_2\in E(G)$. Clearly, $\{v,v_1\}$ and $\{u,v_2\}$ belong to different nontrivial components of $G-C$, which contradicts the fact that $G$ is essentially 9-connected.
\end{proof}

Now we are ready to deal with the case $5\le s\le 6$.

When $s=5$, we know that $4\le d_1\le 5$. Note that $|V(G)|=2+s+t\le2+2s=12$ by Fact 1. 
Suppose that there is a vertex in $\{v_2,v_3\}$, say $v_2$, such that $v_2$ has no neighbors in  $N'(v_1)\cup v_1$. By Fact 3, every vertex in $N(v_2)\cap T$ must be adjacent to all vertices in $N'(v_1)$. Note that $|N'(v_1)|=3$. Thus, the neighbors of $v_2$ in $T$ are $4^+$-vertices. Note that $d_2\ge d_1\ge 4$ and $|N(v_2)\cap S|\le1$. So $|N(v_2)\cap T|\ge2$. If $|N(v_2)\cap T|\ge3$, then 
$|E(H)|\ge|E[U,S]\cup E(U)|+3|T|+(4-3)\cdot3\ge (d_1+2)+3t+3\ge 2s+2t+2$, 
a contradiction.
If $|N(v_2)\cap T|=2$, then $|N(v_2)\cap S|=1$ as $d(v_2)\ge4$ and $|E_H(S)|\ge1$ by type 1. As $|V(G)|\ge10$, $t=|V(G)|-2-s\ge3$. Thus, 
$|E(H)|\ge|E[U,S]\cup E(U)|+|E_H(S)|+3|T|+(4-3)\cdot2\ge (d_1+2)+3t+3\ge 2s+2t+2$, 
a contradiction. Therefore, each of $\{v_2,v_3\}$ must have at least one neighbor in
$N'(v_1)\cup v_1$ and $|E_H(S)|+|E[U,S]\cup E(U)|\ge d_1+2+|E_H(S)|\ge8$.

Since $|E(H)|\le 2s+2t+1$, $3t\le|E[S,T]|\le (2s+2t+1)-8$. We get that $t\le3$ by a direct computation. Since $t\ge3$, we have $t=3$ and $|E[S,T]|=3t$, that is, every vertex in $T$ is a 3-vertex. Thus, $|E_H(S)|+|E[U,S]\cup E(U)|=8$. It follows that each of $\{v_2,v_3\}$ must have exactly one neighbor in $N'(v_1)\cup v_1$ and $v_2v_3\notin E(G)$. Note that $v_1$ is adjacent to at most one vertex in $\{v_2,v_3\}$. There exists a vertex in $\{v_2,v_3\}$, say $v_2$, such that every vertex in $N(v_2)\cap T$ is adjacent to all vertices in $N'(v_1)-N(v_2)$ and $|N'(v_1)-N(v_2)|\ge2$ by Fact 3. Thus, each vertex in $T_0$ is a $4^+$-vertex. Since $d_3,d_2\ge d_1\ge4$, we have that $t_0=d_2+d_3-4-t\ge1$. That is, there is at least one vertex $u$ in $T_0$ of degree at least 4, a contradiction. 

When $s=6$, we have that $d_1=5$ and $v_1$ is not adjacent to $v_2$ and $v_3$.

We claim that $t\ge5$. Suppose to the contrary that $t\le4$. Then $|V(G)|=2+s+t\le12$. By Fact 2, there is a vertex $v_i, i\in\{2,3\}$, say $v_2$, such that $|N(v_2)\cap N'(v_1)|\le2$. According to Fact 3, every vertex in $N(v_2)\cap T$ is adjacent to all vertices in $N'(v_1)-N(v_2)$. Thus, the degree of each vertex in $T_0$ is at least 4. Note that $$d_2+d_3-2=|N(v_2)\cap N'(v_1)|+|N(v_3)\cap N'(v_1)|+2|N(v_2)\cap v_3|+|N(v_2)\cap T|+|N(v_3)\cap T|.$$
Therefore, we have that 
\begin{align*}
2s+2t+1&\ge|E(H)|=|E[U,S]\cup E(U)|+|E_H(S)|+|E[S,T]|\\
&\ge(d_1+2)+|N(v_2)\cap N'(v_1)|+|N(v_3)\cap N'(v_1)|+|N(v_2)\cap v_3|+3|T|+|T_0|\\
&\ge(d_1+2)+(d_2+d_3-2-1)-(|N(v_2)\cap T|+|N(v_3)\cap T|)+3|T|+|T_0|\\
&=d_1+d_2+d_3+2t-1,
\end{align*}
since $t+t_0\ge |N(v_2)\cap T|+|N(v_3)\cap T|$. As $d_i\ge5,i\in[3]$ and $s=6$, we get $13\ge15-1$ by the computation, a contradiction.
This completes the proof of the claim that $t\ge 5$.

\smallskip
By Fact 1, we have that $5\le t\le6$. We claim that $v_i$ must have at least one neighbor in $N'(v_1)$ for any $i\in\{2,3\}$.
Suppose to the contrary that there is a vertex $v_i,i\in\{2,3\}$, say $v_2$, such that $N(v_2)\cap N'(v_1)=\emptyset$.

Let $W(v)=\{w: w\in N'(v_1)\ \&\ vw\notin E(G),\ \emph{for}\ v\in N(v_2)\cap T\}.$ We show that $|W(v)|\le t-4$, for any $v\in N(v_2)\cap T$. If not, then there is a vertex $u\in N(v_2)\cap T$ such that $|W(u)|\ge t-3$. Consider $C=V(G)-(\{v_1,v_2,u\}\cup W(u))$. Note that $|C|\le2+s+t-(3+t-3)=8$. Clearly, $v_1\cup W(u)$ and $\{u,v_2\}$ belong to different components of $G-C$. Both of them contain at least one edge, a contradiction to the fact that $G$ is essentially 9-connected. Thus $|W(v)|\le t-4$.

Therefore, we have that the degree of each vertex $v$ in $N(v_2)\cap T$ is at least $|N'(v_1)|-|W(v)|+1\ge 9-t.$
The degree of each vertex in $T_0$ is at least $10-t.$

Now we show that $t_0 \ge 1$. By Fact 1, $|E_H(S)|\le s-t=6-t$. Since $N(v_2)\cap N'(v_1)=\emptyset$, $|N(v_3)\cap S|=|E_H(S)|$.
Thus, $|N(v_3)\cap T|\ge d_3-1-|N(v_3)\cap S|\ge t+d_3-7.$
Therefore, $$t_0\ge|N(v_2)\cap T|+|N(v_3)\cap T|-t\ge (d_2-2)+(t+d_3-7)-t=d_2+d_3-9\ge 1.$$
Thus, 
$$|E(H)|\ge|E[U,S]\cup E(U)|+3|T|+(10-t-3)\cdot t_0\ge d_1+2t+9=2s+2t+2.$$
This leads to a contradiction, completing the proof of the claim that $v_i$ has at least one neighbor in $N'(v_1)$ for any $i\in\{2,3\}$.

It implies that $|E_H(S)|\ge2$. So $$|E(H)|\ge|E[U,S]\cup E(U)|+|E_H(S)|+3|T|\ge (d_1+2)+2+3t\ge2s+2t+2,$$ a contradiction.
This completes the proof of Conclusion~\ref{con1}.
\end{proof}

\medskip
Thus $H$ is a spanning subgraph of $G$ with $|E(H)|\ge 2n-2$, and we can find a spanning subgraph $H_1$ of $H-e$ with $|E(H_1)|=2n-3$ for any $e\in E(G)$. If $H_1$ is a spanning minimally rigid subgraph of $G-e$, then we are done. So we assume that $H_1$ is not a spanning minimally rigid subgraph of $G-e$, that is, there is a subgraph $Z\subseteq H_1$ of size at least 2 such that $|E(Z)|>2|V(Z)|-3$. Choose $Z\subseteq H_1$ such that $Z$ is a minimal subgraph. That is, $|E(Z)|>2|V(Z)|-3$. But for any proper subgraph $Z'$ of $Z$, $|E(Z')|\le2|V(Z')|-3$. Clearly, $|V(Z)|\ge4$ and $d_Z(v)\ge3$ for each $z\in V(Z)$.
\begin{conclusion}\label{con2}
$Z$ must satisfy one of the following conditions.\\
\emph{(a)} $|V(Z)\cap S|\ge3$.\\
\emph{(b)} $V(Z)=\{x,v_1,v_2,v_3\}$ and $Z$ is a $K_4$.
\end{conclusion}
 \begin{proof}
Suppose that $|V(Z)\cap S|\le2$. Then $|N(v)\cap Z|\le2$ for any $v\in T$, which implies that $V(Z)\cap T=\emptyset$ by the minimality of $Z$.
As $|V(Z)|\ge4$ and $|V(Z)\cap S|\le2$, $U\subseteq V(Z)$ and $Z$ must be a $K_4$. Since $x$ is not adjacent to $N'(v_1)$, $N'(v_1)\nsubseteq V(Z)$ and $V(Z)=\{x,v_1,v_2,v_3\}$.
 \end{proof}


We pick a spanning subgraph $Z_0$ of $Z$ such that $|E(Z_0)|=2|V(Z_0)|-3$. A Laman graph (minimally rigid graph) $G$ is \textbf{\emph{maximal}} if no Laman graph properly contains $G$. 
We will show that we can obtain a spanning minimally rigid subgraph $H^\ast$ of $G-e$ by adding the remaining vertices of $G$ and some edges in $E(G)$ to $Z_0$. 
Firstly, if $x_i\in V(G)-V(Z_{i-1})$ and $x_i$ has at least two neighbors in $Z_{i-1}$ in $G-e$, 
then add vertex $x_i$ and two edges (other than $e$) between $x_i$ and $Z_{i-1}$ to $Z_{i-1}$. 
Denote by $Z_i$ the resulting graph. So $V(Z_i)=V(Z_{i-1})\cup x_i$ and $|E(Z_i)|=|E(Z_{i-1})|+2$. Suppose that $i\in[k]$, that is, we can add at most $k$ vertices to $Z_0$.

Clearly, $|E(Z_k)|=|E(Z_0)|+2k=2|V(Z_k)|-3$.
For any subgraph $H'\subseteq Z_k$ with $|H'|\ge2$, we have that $|E(H')|\le 2|V(Z_0\cap H')|-3+2(|V(H')|-|V(Z_0)|)\le 2|V(H')|-3$.
For convenience, let $S'=V(Z_k)\cap S$ and $T'=V(Z_k)\cap T$. Let $S-S'=\{w_1,w_2,\ldots\}$ and $T-T'=\{u_1,u_2,\ldots\}$.

\begin{conclusion}\label{con3}
If $S'\nsubseteq N'(v_1)$ and $U\nsubseteq Z_k$, then the following hold.\\
\emph{(a)} $e\in E[U,S']\cup E(U)$.\\
\emph{(b)} Let $U=\{x_1,x_2\}$. If $x_1\in U$ has at least two neighbors in $S'$, then $x_2\notin Z_k$ and $x_2$ has at most one neighbor in $S'$.
\end{conclusion}
\begin{proof}
By Conclusion \ref{con2}, $|S'|\ge |V(Z)\cap S|\ge3$. \\
(a) If $e\notin E[U,S']\cup E(U)$, then there is a vertex say $x_1\in U$ that has at least two neighbors in $S'$ and $x_1\in Z_k$. Another vertex $x_2\in U$ must have at least two neighbors in $S'\cup x_1$ and $x_2\in Z_k$, a contradiction.\\
(b) If $x_2\in Z_k$, then $|N(x_1)\cap (x_2\cup S')|\ge 3$ and $x_1\in Z_k$, a contradiction. Thus, $x_2\notin Z_k$. If $|N(x_2)\cap S'|\ge2$, then $e\in E[x_2,S']$, as $x_2\notin Z_k$. So in $G-e$, $x_1$ has at least two neighbors in $S'$ and $x_1\in Z_k$. Since $x_2x_1\in E(G-e)$ and $|E[x_2,S']-e|\ge1$, we have that $x_2\in Z_k$, a contradiction.
\end{proof}

Next, we construct $Z_{k'}$, when $U\nsubseteq Z_k$ and $S'\nsubseteq N'(v_1)$. Note that $|S'|\ge3$ by Conclusion \ref{con2}. Let $U'=U-Z_k$. In $G$, there must exist a vertex, say $x_1\in U$ that has at least two neighbors in $S'$ and another vertex, say $x_2\in U$ that has at least one neighbor in $S'$. By Conclusion \ref{con3}(b), $x_2\in U'$. If $U'=U$, then add $x_1,x_2$ and $e_1,e_2\in E[x_1, S'], e_3\in E[x_2,S'],x_1x_2$ to $Z_k$ and delete $e$ to get $Z_{k'}$. If $U'=x_2$, then we add $x_2$ and $x_1x_2, e_3\in E[x_2, S']$ and delete $e$ to get $Z_{k'}$. Therefore, we have that $|E(Z_{k'})|=2|V(Z_{k'})|-4.$

\begin{conclusion}\label{con4}
Let $S_0=\{w: w\in S-S'\ \&\  N(w)\cap (Z_k-U)=\emptyset\}$ and $S_1=\{w: w\in S-S'\ \&\  |N(w)\cap (Z_k-U)|=1\}$. If $S'\nsubseteq N'(v_1)$ and $U\nsubseteq Z_k$, then $|S_0|\ge2$. 
\end{conclusion}
\begin{proof}
By Conclusion \ref{con3}, $e\in E[U,S']\cup E(U)$ and $S-S'\ne\emptyset$. So $|N(w)\cap (Z_k-U)|\le1$ for each $w\in S-S'$ and $S_0\cup S_1=S-S'$. Suppose to the contrary that $|S_0|\le1$. If $S_0=\{w_1\}$, then we add each vertex  $w\in S_1$ and its two incident edges $e_1\in E[w,U],e_2\in E[w,Z_k-U]$ to $Z_{k'}$. Then add the remaining vertices of $T$ one by one by joining it with two neighbors in the existing subgraph. At last, add $w_1$ and three incident edges including $e'\in E[w_1,U]$. If $S_0=\emptyset$, then choose $w_1\in S_1$ and add each vertex $w\in S_1-w_1$ and two incident edges $e_1\in E[w,U],e_2\in E[w,Z_k-U]$ to $Z_{k'}$. Then add the remaining vertices of $T$ one by one by joining it with two neighbors in the already-formed subgraph. At last, add $w_1$ and three incident edges including $e'\in E[w_1,U]$. Note that $|E(Z_{k'})|=2|V(Z_{k'})|-4$. Thus, $$|E(H^\ast)|=|E(Z_{k'})|+2|V(G)-V(Z_k)|+1=2|V(G)|-3.$$

We will show that $|E(H')|\le 2|V(H')|-3$ for any subgraph $H'\subseteq H^\ast$ with $|H'|\ge2$, to reach a contradiction.
Clearly, if $H'\subseteq Z_k$, then $|E(H')|\le 2|V(H')|-3$. Suppose that  $V(H')\subseteq V(H^\ast)-V(Z_k)$.
If $U'\cap V(H')=\emptyset$, then
\begin{align*}
|E(H')|&\le |V(H')\cap(S-S')|\cdot |V(H')\cap(T-T')|+|E[S-S']|\\
&\le\max\{|V(H')|-1,2(|V(H')|-2)+1,2(|V(H')|-3)+2\}\\
&\le2|V(H')|-3.
\end{align*}
If $|U'\cap V(H')|=1$, then
\begin{align*}
|E(H')|&\le |V(H')\cap(S-S')|\cdot |V(H')\cap(T-T')|+|E[S-S']|\\
&+|E[U'\cap V(H'), (S-S')\cap V(H')]|\\
&\le\max\{(|V(H')|-2)+1,2(|V(H')|-3)+1+2,2(|V(H')|-4)+2+3 \}\\
&\le2|V(H')|-3.
\end{align*}
If $|U'\cap V(H')|=2$, then
\begin{align*}
|E(H')|&\le |V(H')\cap(S-S')|\cdot |V(H')\cap(T-T')|+|E[S-S']|\\
&+|E[U'\cap V(H'), (S-S')\cap V(H')]|+|E(U')|\\
&\le\max\{(|V(H')|-3)+1,2(|V(H')|-4)+1+2,2(|V(H')|-5)+2+3 \}+1\\
&\le2|V(H')|-3.
\end{align*}
Next, we consider that $V(H')\cap V(Z_k)\ne\emptyset$ and $V(H')\cap(V(H^\ast)-V(Z_k))\ne\emptyset$. Note that $Z_k$ is the unique maximal Laman subgraph in $H^\ast-w_1$. Since $|N(w_1)\cap Z_k|\le1$, we have that $|E(H')|\le2|V(H')|-3$. This completes the proof of Conclusion~\ref{con4}.
\end{proof}

\begin{conclusion}\label{con5}
$2\le|S-S'|\le3$.
\end{conclusion}
\begin{proof}
By Conclusion \ref{con2}, if $6\ge s\ge5$, then $|S'|\ge|V(Z)\cap S|\ge3$ and $|S-S'|\le3$. If $s=4$, then $|S'|\ge|V(Z)\cap S|\ge2$ and $|S-S'|\le2$. Suppose to the contrary that $|S-S'|\le1$.  Then all but at most one vertex in $T$ are adjacent to at least two vertices in $S'$ in $G-e$, that is, $|T-T'|\le1$.

If $T-T'=u_1$, then $u_1$ is adjacent to $w_1\in S-S'$ and is incident with $e$ in $G$ as $d(u_1)\ge3$. Note that $|N(w_1)\cap Z_k|\le1$ since $w_1\in S-S'$ and $w_1$ is not incident with $e$. It implies that $|N(w_1)\cap (V(G)-V(Z_k))|\ge2$ and $U\nsubseteq Z_k$. By Conclusion \ref{con3}(a), $e\in E(U)\cup E[U,S']$, a contradiction to the fact that $e$ is incident with $u_1$. So we assume that  $T=T'$. If $S=S'$, then $U\subseteq Z_k$ and $V(Z_k)=V(G)$. Note that $Z_k$ is a spanning minimally rigid subgraph of $G-e$, a contradiction.
If $S-S'=w_1$, then $|N(w_1)\cap (V(G)-V(Z_k))|\ge1$ and $U\nsubseteq Z_k$. Note that $S'\nsubseteq N'(v_1)$, we have that $e\in E(U)\cup E[U,S']$ by Conclusion \ref{con3}(a). As $e$ is not incident with $w_1$, $|N(w_1)\cap (V(G)-V(Z_k))|\ge2$. It follows that $x,v_1\notin Z_k$ and $w_1$ is adjacent to $x$ and $v_1$. So $d(w_1)=3$ and $w_1\in\{v_2,v_3\}$. As $|S|\ge4$ and $N(v_1)\cap\{v_2,v_3\}\ne\emptyset$, $d(v_1)\ge4$, a contradiction to the fact that $d(v_2),d(v_3)\ge d(v_1)$.
\end{proof}

It is not hard to see that $d(v_1)\ge 4$. Otherwise if $d(v_1)=3$, since $s\ge4$, it follows that $s=4$ and $v_1$ is not adjacent to $v_2$ and $v_3$.
By conclusion \ref{con2}, $|S'|\ge|V(Z)\cap S'|\ge3$ and $|S-S'|\le1$, a contradiction to Conclusion \ref{con5}. It implies that $3$-vertices are only adjacent to $4^+$-vertices.

\bigskip
To complete the proof, we now construct a spanning minimally rigid subgraph $H^\ast$ by considering the following cases.

\noindent\textbf{Case 1:} $S'\nsubseteq N'(v_1)$.

\textbf{Subcase 1.1:} $S-S'\subseteq N'(v_1)$ and $|S-S'|=2$.

(\romannumeral1) Suppose that $U\subseteq V(Z_k)$.

We claim that $|T-T'|\ge2$. If $e\notin E[S-S',Z_k]$, then $|T-T'|\ge2$ as $N(S-S')\subseteq v_1\cup(T-T')$ and $G$ is 3-connected. If $e\in E[S-S',Z_k]$, then $|N(S-S')\cap (Z_k-v_1)|\le1$ and $|T-T'|\ge1$ as $N(S-S')\subseteq Z_k\cup(T-T')$ and $G$ is 3-connected. As $u_1$ is not incident with $e$, $|N(u_1)\cap S'|\le1$ for $u_1\in T-T'$. It follows that $d(u_1)=3$ and $u_1$ is adjacent to $w_1$ and $w_2$. Since $3$-vertices are only adjacent to $4^+$-vertices, $w_1$ and $w_2$ are $4^+$-vertices. There exists a vertex in $S-S'$, say $w_1$, that is not incident with $e$. As $N(w_1)\subseteq \{v_1,w_2\}\cup(T-T')$, we get that $|T-T'|\ge2$.

Let $N'(v_i)=N(v_i)\cap (T-T')$ and $N''(v_i)=N(v_i)\cap(S-S')$ for $i\in\{2,3\}$. We show that $|N'(v_i)|\ge2$ for any $i\in\{2,3\}$. Suppose to the contrary that there is a vertex, say $v_2$, such that $|N'(v_2)|\le1$. Consider $C=v_1\cup N'(v_2)\cup(S'-v_2)\cup N''(v_2)$. Note that $|C|\le6$ as $|N''(v_2)|\le1$. We have that $(S-S'-N''(v_2))\cup(T-T'-N'(v_2))$ and $\{x,v_2\}$ lie in different components of $G-C$ and $xv_2\in E(G)$. As $|T-T'|\ge2$, there is at least one edge in $(S-S'-N''(v_2))\cup(T-T'-N'(v_2))$. So $C$ is an essential cut, a contradiction. Since all but at most one vertex in $T-T'$ can be only adjacent at most one vertex in $S'$, we have that $|T-T'|\ge3$ and there are three different vertices $u_1,u_2$ and $u_3$ in $T-T'$ such that $u_1v_2,u_2v_3,u_3v_3\in E(G)$ and $u_i$ is adjacent to $w_1,w_2$ for $i\in[2]$. We add $w_i$ and $v_1w_i$ to $Z_k$ and obtain $Z_{k+2}$, where $i\in\{1,2\}$. Then for $i\in\{1,2\}$, add $u_i$ and $u_iw_1,u_iw_2,u_iv_{i+1}$ to $Z_{k+2}$ and obtain $Z_{k+4}$.

If $e\notin E(Z_{k+4})$, then add remaining vertices in $T-T'$ one by one by joining it with two vertices in the already-formed subgraph in $G-e$ to get $H^\ast$. Thus, we have that $$|E(H^\ast)|=|E(Z_{k+4})|+2(|T-T'|-2)=2|V(G)|-3.$$

If $e\in E(Z_{k+4})$, then $u_3$ is adjacent to $w_1$ and $w_2$. We add $u_3$ and $u_3w_1,u_3w_2,u_3v_3$ to $Z_{k+4}$ and obtain $Z_{k+5}$. And then add remaining vertices in $T-T'$ one by one by joining it with two vertices in the already-formed subgraph in $G-e$ and get $Z_{n'}$. And delete $e$ from $Z_{n'}$ to get $H^\ast$. Thus, we have that $$|E(H^\ast)|=|E(Z_{k+5})-e|+2(|T-T'|-3)=2|V(G)|-3.$$

It is remaining to show that $|E(H')|\le 2|V(H')|-3$ for any subgraph $H'\subseteq H^\ast$ with $|H'|\ge2$.
Clearly, if $H'\subseteq Z_k$, then $|E(H')|\le 2|V(H')|-3$.  If $V(H')\subseteq V(H^\ast)-V(Z_k)$, then
\begin{align*}
|E(H')|&\le |H'\cap(S-S')|\cdot |H'\cap(T-T')|\\
&\le\max\{|V(H')|-1,2(|V(H')|-2)\}\\
&\le2|V(H')|-3.
\end{align*}
Next, we consider that $V(H_1')=V(H')\cap V(Z_k)\ne\emptyset$ and $V(H'_2)=V(H')-V(H'_1)\ne\emptyset$. Let $W=\{w_1,w_2,u_1,u_2,u_3\}$. It is easy to check that for any subset $W'\subseteq W$ and any subgraph $Z'\subseteq Z_k$, $|E_{H^\ast}(W'\cup V(Z'))|-|E(Z')|\le 2|W'|.$
Each vertex in $V(H^\ast)-V(Z_k)-W$ is a 2-vertex in $H^\ast$. Thus, every time we add any subset $Y\subseteq V(H^\ast)-V(Z_k)$ into $Z_k$, we have that $|E_{H^\ast}(Y\cup V(Z'))|-|E(Z')|\le 2|Y|$ for any subgraph $Z'\subseteq Z_k$. Thus, $|E(H')|\le |E(H_1')|+2|V(H_2')|\le2|V(H')|-3$.


\smallskip
(\romannumeral2) Suppose that $U\nsubseteq V(Z_k)$.

As $U\nsubseteq Z_k$, $|S'|\ge3$ by Conclusion \ref{con2}. So $|S|\ge5$. Since $|N(x)\cap S'|\ge2$, $v_1\notin Z_k$ and $|N(v_1)\cap S'|=1$ by Conclusion \ref{con3}(b). So $|S'|=3$. As $|N(x)\cap Z_k|=2$, $x\notin Z$. It follows that $|T\cap V(Z)|\ge1$ from $|V(Z)|\ge4$. Since $d_Z(v)\ge3$ for any $v\in Z$, all vertices in $T\cap V(Z)$ are adjacent to all vertices in $S'$. According to Conclusion \ref{con4}, $|S_0|\ge2$ and so $S-S'=S_0$. Let $N(v_1)\cap S'=z_1$ and $S_0=\{w_1,w_2\}$. We claim that $|T-T'|\ge6$. Suppose to the contrary that $|T-T'|\le5$. Then $\{x,z_1\}\cup (T-T')$ is an essential cut of size at most 7, as $\{v_1,w_1,w_2\}$ and $(S'-z_1)\cup T'$ belong to different nontrivial components, a contradiction.

Let $N'(v_i)=N(v_i)\cap (T-T')$ for $i\in\{2,3\}$. Next, we show that $|N'(v_i)|\ge4$ for $i\in\{2,3\}$. Suppose to the contrary that there is a vertex, say $v_2$, such that $|N'(v_2)|\le3$. Then $C=\{x,v_1\}\cup N'(v_2)\cup(S-S_0-v_2)$ is an essential cut of size at most 8, as $S_0\cup(T-T'-N'(v_2))$ and $v_2\cup T'$ belong to different nontrivial components, a contradiction. As $e\in E(U)\cup E[U,S']$, in $G-e$, there are three vertices $u_1,u_2$ and $u_3$ in $T-T'$ such that $u_1v_2,u_2v_3,u_3v_2\in E(G)$ and $u_i$ is adjacent to $w_1$ and $w_2$ for $i\in[3]$.

Note that $|E(Z_{k'})|=2|V(Z_{k'})|-4.$ We will construct $H^\ast$ in the following steps.
First, we add $w_i$ and $v_1w_i$ to $Z_{k'}$ and obtain $Z_{k'+2}$, where $i\in\{1,2\}$. We then add $u_i$ and $u_iw_1,u_iw_2,u_iv_{i+1}$ to $Z_{k'+2}$ for $i\in[2]$ and obtain $Z_{k'+4}$. In the last step, we add $u_3$ and $u_3w_1,u_3w_2,u_3v_2$ to obtain $Z_{k'+5}$, and add remaining vertices in $T-T'$ one by one by joining it with two neighbors in $V(Z_{k'+5})$ to get $H^\ast$.  Thus, we have that $$|E(H^\ast)|=|E(Z_{k'})|+2+3\cdot3+2(|T-T'|-3)=2|V(G)|-3.$$
Let $W=U'\cup\{w_1,w_2,u_1,u_2,u_3\}$. It is easy to check that for any subset $W'\subseteq W$ and any subgraph $Z'\subseteq Z_k$,
$|E_{H^\ast}(W'\cup V(Z'))|-|E(Z')|\le 2|W'|.$

It is remaining to show that $|E(H')|\le 2|V(H')|-3$ for any subgraph $H'\subseteq H^\ast$ with $|H'|\ge2$.
Clearly, if $H'\subseteq Z_k$, then $|E(H')|\le 2|V(H')|-3$.  If $V(H')\subseteq V(H^\ast)-V(Z_k)$, then
\begin{align*}
|E(H')|&\le |V(H')\cap(S-S')|\cdot |V(H')\cap(T-T')|\\
&+|E[U'\cap V(H'),(S-S')\cap V(H')]|+|E(U'\cap V(H'))|\\
&\le\max\{|V(H')|-1,2(|V(H')|-2),(|V(H')|-2)+1,2(|V(H')-3|)+2,\\&(|V(H')|-3)+1+1,2(|V(H')|-4)+2+1\}\\
&\le2|V(H')|-3.
\end{align*}
Next, we consider that $V(H_1')=V(H')\cap V(Z_k)\ne\emptyset$ and $V(H'_2)=V(H')-V(H'_1)\ne\emptyset$. 
Each vertex in $V(H^\ast)-V(Z_k)-W$ is a 2-vertex in $H^\ast$. Thus, every time we add any subset $Y\subseteq V(H^\ast)-V(Z_k)$ into $Z_k$, we have that  $|E_{H^\ast}(Y\cup V(Z'))|-|E(Z')|\le 2|Y|$ for any subgraph $Z'\subseteq Z_k$. Thus, $|E(H')|\le |E(H_1')|+2|V(H_2')|\le2|V(H')|-3$, completing the proof.

\medskip
\textbf{Subcase 1.2:} $|(S-S')\cap N'(v_1)|=1$ and $|(S-S')\cap \{v_2,v_3\}|=1$.

For convenience, suppose that $w_1\in N'(v_1)$ and $w_2=v_2$.

(\romannumeral1) Suppose that $U\subseteq V(Z_k)$.

We first show that $|T-T'|\ge2$. As there is a vertex, say $w_1$, in $S-S'$ that has only one neighbor in $Z_k$, $|T-T'|\ge d(w_1)-1-(|S-S'|-1)=1$. If $T-T'=u_1$, then $d(w_1)=3$ and $w_1$ is adjacent to $w_2$. It follows that $d(w_2),d(u_1)\ge4$ since 3-vertices are only adjacent to $4^+$-vertices. As $N(u_1)\subseteq S$, $|N(u_1)\cap S'|\ge2$ and so $e\in E[u_1,S']$. However, in this case, $w_2$ only can be adjacent to at most three vertices, a contradiction. Thus, $|T-T'|\ge2$.

If $|N(w_2)\cap V(Z_k)|=2$, then $e\in E[w_2,V(Z_k)]$ since $w_2\in S-S'$. As all vertices in $T-T'$ are adjacent to at most one vertex in $Z_k$, each vertex $u\in T-T'$ is adjacent to $w_1$ and $w_2$. We will construct $H^\ast$ in the following steps.
First add $w_1,w_2$ and $w_1v_1$, $E[w_2,V(Z_k)]-e$ to $Z_k$ to get $Z_{k+2}$. For $i\in[2]$, add $u_i\in T-T'$ and three edges incident with $u_i$ including $u_1w_1,u_2w_2$ to $Z_{k+2}$ and get $Z_{k+2+i}$. In the last step, we add remaining vertices in $T-T'$ one by one by joining it with two neighbors in $V(Z_{k+4})$ to get $H^\ast$.  Thus, we have that $$|E(H^\ast)|=|E(Z_{k})|+2+3\cdot2+2(|T-T'|-2)=2|V(G)|-3.$$ Let $W=\{w_1,w_2,u_1,u_2\}$.  It is easy to check that for any subset $W'\subseteq W$ and any subgraph $Z'\subseteq Z_k$, $|E_{H^\ast}(W'\cup V(Z'))|-|E(Z')|\le 2|W'|.$

\smallskip
Next, suppose that $N(w_2)\cap V(Z_k)=x$. Let $\{z_1,z_2\}\subseteq S'\cap N'(v_1)$ and $N'(z_i)=N(z_i)\cap (T-T')$ for $i\in[2]$. Then we claim that $|N'(z_i)|\ge2$ for any $i\in[2]$. Suppose to the contrary that there is a vertex, say $z_1$, such that $|N'(z_1)|\le1$. Consider $C=N'(z_1)\cup(S-z_1-w_2)\cup x$ and $|C|\le1+4+1=6$. Since all vertices in $w_2\cup (T-T'-N'(z_1))$ are not adjacent to all vertices in $\{v_1,z_1\}\cup T'$. Thus, $w_2\cup (T-T'-N'(z_1))$ and $\{v_1,z_1\}\cup T'$ belong to different components of $G-C$ and $v_1z_1\in E(G)$. As $|N'(z_1)|\le1$ and $|T-T'|\ge2$, there is at least one edge in $w_2\cup (T-T'-N'(z_1))$. So $C$ is an essential cut, a contradiction. Since all but at most one vertex in $T-T'$ can be only adjacent at most one vertex in $S'$, we have that $|T-T'|\ge3$ and there are three different vertices $u_1,u_2$ and $u_3$ such that $u_1z_1,u_2z_2,u_3z_1\in E(G)$ and $u_i$ is adjacent to $w_1$ and $w_2$ for $i\in[2]$.

We will construct $H^\ast$ in the following steps.
First add $w_1,w_2$ and $v_1w_1,xw_2$ to $Z_k$ and obtain $Z_{k+2}$. Then for $i\in[2]$, add $u_i$ and $u_iw_1,u_iw_2,u_iz_{i}$ to $Z_{k+2}$ and obtain $Z_{k+4}$.

If $e\notin E(Z_{k+4})$, then add remaining vertices in $T-T'$ one by one by joining it with two vertices in $Z_{k+4}$ in $G-e$ to get $H^\ast$. Thus, we have that $|E(H^\ast)|=|E(Z_{k+4})|+2(|T-T'|-2)=2|V(G)|-3.$ Let $W=\{w_1,w_2,u_1,u_2\}$. It is easy to check that for any subset $W'\subseteq W$ and any subgraph $Z'\subseteq Z_k$, $|E_{H^\ast}(W'\cup V(Z'))|-|E(Z')|\le 2|W'|.$

If $e\in E(Z_{k+4})$, then $u_3$ is adjacent to $w_1$ and $w_2$. We add $u_3$ and $u_3w_1,u_3w_2,u_3z_1$ to $Z_{k+4}$ and obtain $Z_{k+5}$.
In the last step, we add remaining vertices in $T-T'$ one by one by joining it with two vertices in $Z_{k+5}$ in $G-e$ to get $Z_{n'}$, and delete $e$ from $Z_{n'}$ to get $H^\ast$. Thus, we have that $|E(H^\ast)|=|E(Z_{k+5})-e|+2(|T-T'|-3)=2|V(G)|-3.$ Let $W=\{w_1,w_2,u_1,u_2,u_3\}$.  It is easy to check that for any subset $W'\subseteq W$ and any subgraph $Z'\subseteq Z_k$, $|E_{H^\ast}(W'\cup V(Z'))|-|E(Z')|\le 2|W'|.$

It is remaining to show that $|E(H')|\le 2|V(H')|-3$ for any subgraph $H'\subseteq H^\ast$ with $|H'|\ge2$.
Clearly, if $H'\subseteq Z_k$, then $|E(H')|\le 2|V(H')|-3$.  If $V(H')\subseteq V(H^\ast)-V(Z_k)$, then
\begin{align*}
|E(H')|&\le |V(H')\cap(S-S')|\cdot |V(H')\cap(T-T')|\\
&\le\max\{|V(H')|-1,2(|V(H')|-2)\}\\
&\le2|V(H')|-3.
\end{align*}
Next, we consider that $V(H_1')=V(H')\cap V(Z_k)\ne\emptyset$ and $V(H'_2)=V(H')-V(H'_1)\ne\emptyset$. Note that each vertex in $V(H^\ast)-V(Z_k)-W$ is a 2-vertex in $H^\ast$. Thus, every time we add any subset $Y\subseteq V(H^\ast)-V(Z_k)$ into $Z_k$, we have that $|E_{H^\ast}(Y\cup V(Z'))|-|E(Z')|\le 2|Y|$ for any subgraph $Z'\subseteq Z_k$. Thus, $|E(H')|\le |E(H_1')|+2|V(H_2')|\le2|V(H')|-3$.

\smallskip
(\romannumeral2) Suppose that $U\nsubseteq V(Z_k)$.

In this case, $|N(v_1)\cap S'|\ge2$ implies that $x\notin Z_k$ and $|N(x)\cap S'|=1$ by Conclusion \ref{con3}(b). We first show that $T\cap V(Z)\ne\emptyset$. Suppose to the contrary that $T\cap V(Z)=\emptyset$. Then $V(Z)\subseteq v_1\cup S'$. As $Z\subseteq H$, we know that every vertex in $N'(v_1)\cap V(Z)$ can only be adjacent to $v_1$ and $v_3$ in $H$. Since $d_Z(v)\ge3$ for each $v\in Z$, $N'(v_1)\cap V(Z)=\emptyset$, a contradiction to the fact that $|V(Z)\cap S|\ge3$.
Thus $|T\cap V(Z)|\ge1$ and each vertex in $T\cap V(Z)$ is adjacent to at least three vertices in $S'$. According Conclusion \ref{con4}, $|S_0|\ge2$ and so $S-S'=S_0$.
We also have that $|T-T'|\ge6$, for otherwise if $|T-T'|\le5$, then $\{v_1,w_1\}\cup (T-T')\cup v_3$ is an essential cut of size at most 8,
since $\{x,w_2\}$ and $(S'-v_3)\cup T'$ belong to different nontrivial components, a contradiction.

Let $N'(z)=N(z)\cap (T-T')$ for $z\in S'$. We show that there are at least two vertices $z_1,z_2\in N'(v_1)\cap S'$ such that $|N'(z_i)|\ge3$ for $i\in[2]$. Suppose to the contrary that there is at most one vertex, say $z_1$, such that $|N'(z_1)|\ge3$. Let $S^\ast= N'(v_1)\cap S'-z_1$. Then $1\le|S^\ast|\le2$. Thus, $|N'(S^\ast)|=\sum_{z\in S^\ast}|N'(z)|\le4$. Then $C=\{x,v_1\}\cup N'(S^\ast)\cup \{z_1,v_3\}$ is an essential cut of size at most 8, as $S_0\cup(T-T'-N'(S^\ast))$ and $S^\ast\cup T'$ belong to different nontrivial components, a contradiction. Therefore, there are three different vertices $u_1,u_2$ and $u_3$ in $T-T'$ such that $u_1z_1,u_2z_2,u_3z_1\in E(G)$ and $u_i$ is adjacent to $w_1$ and $w_2$ for $i\in[3]$.

We will construct $H^\ast$ in the following steps.
We first add $w_1,w_2$ and $v_1w_1,xw_2$ to $Z_{k'}$ and obtain $Z_{k'+2}$. For $i\in[2]$, add $u_i$ and $u_iw_1,u_iw_2,u_iz_{i}$ to $Z_{k'+2}$ and obtain $Z_{k'+4}$, and add $u_3$ and $u_3w_1,u_3w_2,u_3z_1$ to $Z_{k'+4}$ to obtain $Z_{k'+5}$. In the last step, we add remaining vertices in $T-T'$ one by one by joining it with two neighbors in $V(Z_{k'+5})$ to get $H^\ast$.  Thus, we have that $$|E(H^\ast)|=|E(Z_{k'})|+2+3\cdot3+2(|T-T'|-3)=2|V(G)|-3.$$ Let $W=U'\cup\{w_1,w_2,u_1,u_2,u_3\}$. It is easy to check that for any subset $W'\subseteq W$ and any subgraph $Z'\subseteq Z_k$, $|E_{H^\ast}(W'\cup V(Z'))|-|E(Z')|\le 2|W'|.$ 

It is remaining to show that $|E(H')|\le 2|V(H')|-3$ for any subgraph $H'\subseteq H^\ast$ with $|H'|\ge2$. Clearly, if $H'\subseteq Z_k$, then $|E(H')|\le 2|V(H')|-3$. If $V(H')\subseteq V(H^\ast)-V(Z_k)$, then
\begin{align*}
|E(H')|&\le |V(H')\cap(S-S')|\cdot |V(H')\cap(T-T')|\\
&+|E[U'\cap V(H'),(S-S')\cap V(H')]|+|E(U'\cap V(H'))|\\
&\le\max\{|V(H')|-1,2(|V(H')|-2),(|V(H')|-2)+1,2(|V(H')-3|)+2,\\&(|V(H')|-3)+1+1,2(|V(H')|-4)+2+1\}\\
&\le2|V(H')|-3.
\end{align*}
Next, we consider that $V(H_1')=V(H')\cap V(Z_k)\ne\emptyset$ and $V(H'_2)=V(H')-V(H'_1)\ne\emptyset$. Each vertex in $V(H^\ast)-V(Z_k)-W$ is a 2-vertex in $H^\ast$. Thus, every time we add any subset $Y\subseteq V(H^\ast)-V(Z_k)$ into $Z_k$,  we have that
$|E_{H^\ast}(Y\cup V(Z'))|-|E(Z')|\le 2|Y|$ for any subgraph $Z'\subseteq Z_k$. Thus, $|E(H')|\le |E(H_1')|+2|V(H_2')|\le2|V(H')|-3$, completing the proof.

\medskip
\textbf{Subcase 1.3:} $|S-S'|=3$.

Then $s=6$ and $d(v_1)=5$ since $|S'|\ge|V(Z)\cap S|\ge3$ by Conclusion \ref{con2}. Thus, $|S'|=|V(Z)\cap S|=3$ and $|S'\cap \{v_2,v_3\}|=2\ \&\  |S'\cap N'(v_1)|=1$ or $|S'\cap N'(v_1)|=2\ \&\  |S'\cap \{v_2,v_3\}|=1$. As $|N(x)\cap Z|\le2$ and $|N(v_1)\cap Z|\le2$, $x,v_1\notin V(Z)$. Note that  $|V(Z)|\ge4$, so $T\cap V(Z)\ne\emptyset$. It follows that every vertex in $T\cap V(Z)$ is adjacent to all vertices in $S'$.

We show that $|T-T'|\ge4$. Suppose to the contrary that $|T-T'|\le3$. If $U\subseteq V(Z_k)$, then at most one vertex say $w$ in $S-S'$ has another neighbor in $Z_k-U$. If $U\nsubseteq V(Z_k)$, then $|S_1|\le1$ by Conclusion \ref{con4}. So there must exist a vertex say $w_1\in N'(v_1)\cap(S-S')$ such that $N(w_1)\cap(Z_k-U)=\emptyset$. Suppose that $v_2\in S'$. Then $C=(T-T')\cup(S-w_1-v_2)\cup x$ is an essential cut of size at most 8, as $\{v_1,w_1\}$ and $v_2\cup T'$ belong to different nontrivial components, a contradiction.

Let $N'(z_i)=N(z_i)\cap (T-T')$ for $z_i\in S'$ and $i\in[3]$. We show that $|N'(z_i)|\ge3$ for any $i\in[3]$. Suppose that for some $i\in[3]$, say $i=1$, $|N'(z_1)|\le2$. If $U\subseteq V(Z_k)$, then at most one vertex say $w$ in $S-S'$ has another neighbor in $Z_k-U$. If $U\nsubseteq V(Z_k)$, then $|S_0|\ge2$ by Conclusion \ref{con4}. So there must exist two vertices, say $w_1,w_2\in S-S'$ such that $N(w_i)\cap(Z_k-U)=\emptyset$ for $i\in[2]$. Then $C=\{x,v_1\}\cup (S-w_1-w_2-z_1)\cup N'(z_1)$ is an essential cut of size at most 7, as $(T-T'-N'(z_1))\cup\{w_1,w_2\}$ and $z_1\cup T'$ belong to different nontrivial components, a contradiction. Since all but at most one vertex  in $T-T'$ can be only adjacent at most one vertex in $S'$, we have that $|T-T'|\ge8$ and there are three different vertices $u_i\in T-T'$ for $i\in[3]$ and $w_1,w_2\in S-S'$ such that in $G-e$, $u_i$ has a neighbor in $S'$ and is adjacent to $w_1,w_2$.

(\romannumeral1) Suppose that $U\subseteq V(Z_k)$.

We add $w_i$ and an edge in $E[w_i,U]$ to $Z_k$ and obtain $Z_{k+2}$, where $i\in[2]$. Then add $u_i$ and three incident edges including $u_iw_1,u_iw_2$ and an edge in $E[u_i,S']$ to $Z_{k+2}$ and obtain $Z_{k+4}$ for $i\in[2]$.
If $e\notin E(Z_{k+4})$, then add the rest vertices of $T-T'$ and $w_3$ one by one by joining it with two neighbors in the already-formed subgraph in $G-e$ to get $H^\ast$.
If $e\in E(Z_{k+4})$, then add $u_3$ and three incident edges including $u_3w_1,u_3w_2$ and an edge in $E[u_3,S']$ to $Z_{k+4}$ and obtain $Z_{k+5}$. Add the rest vertices of $T-T'$ and $w_3$ one by one by joining it with two neighbors in the already-formed subgraph and delete $e$ to get $H^\ast$.
In both cases,
$$|E(H^\ast)|=|E(Z_{k})|+2|V(G)-V(Z_k)|=2|V(G)|-3.$$

At last, we will check that $|E(H')|\le 2|V(H')|-3$ for any subgraph $H'\subseteq H^\ast$ with $|H'|\ge2$.
Clearly, if $H'\subseteq Z_k$, then $|E(H')|\le 2|V(H')|-3$.  If $V(H')\subseteq V(H^\ast)-V(Z_k)$,
\begin{align*}
|E(H')|&\le |V(H')\cap(S-S')|\cdot |V(H')\cap(T-T')|+|E(S-S')|\\
&\le\max\{|V(H')|-1,2(|V(H')|-2)+1,2(|V(H')|-3)+2\}\\
&\le2|V(H')|-3.
\end{align*}
Next, we consider that $V(H_1')=V(H')\cap V(Z_k)\ne\emptyset$ and $V(H'_2)=V(H')-V(H'_1)\ne\emptyset$. Let $W=\{w_1,w_2,u_1,u_2,u_3\}$. It is easy to check that for any subset $W'\subseteq W$ and any subgraph $Z'\subseteq Z_k$, $$|E_{H^\ast}(W'\cup V(Z'))|-|E(Z')|\le 2|W'|.$$ Since we add all vertices in $V(G)-V(Z_k)-W$ one by one by joining it with two neighbors in the already-formed subgraph, we have that for any subset $Y\subseteq V(H^\ast)-V(Z_k)$,  $$|E_{H^\ast}(Y\cup V(Z'))|-|E(Z')|\le 2|Y|,$$ for any subgraph $Z'\subseteq Z_k$. Thus, $|E(H')|\le |E(H_1')|+2|V(H_2')|\le2|V(H')|-3$.

(\romannumeral2) Suppose that $U\nsubseteq V(Z_k)$.

We add $w_i$ and an edge in $E[w_i,U]$ to $Z_{k'}$ and obtain $Z_{k'+2}$, where $i\in[2]$. Then for $i\in[3]$, add $u_i$ and three incident edges including $u_iw_1,u_iw_2$ and an edge in $E[u_i,S']$ to $Z_{k'+2}$ and obtain $Z_{k'+5}$. Add the rest vertices of $T-T'$ and $w_3$ one by one by joining it with two neighbors in the already-formed subgraph to get $H^\ast$. Clearly,  $$|E(H^\ast)|=|E(Z_{k})|+2|V(G)-V(Z_k)|=2|V(G)|-3.$$

At last, we will check that $|E(H')|\le 2|V(H')|-3$ for any subgraph $H'\subseteq H^\ast$ with $|H'|\ge2$. Clearly, if $H'\subseteq Z_k$, then $|E(H')|\le 2|V(H')|-3$.
Suppose that $V(H')\subseteq V(H^\ast)-V(Z_k)$. If $U'\cap V(H')=\emptyset$, then
\begin{align*}
|E(H')|&\le |V(H')\cap(S-S')|\cdot |V(H')\cap(T-T')|+|E(S-S')|\\
&\le\max\{|V(H')|-1,2(|V(H')|-2)+1,2(|V(H')|-3)+2\}\\
&\le2|V(H')|-3.
\end{align*}
If $|U'\cap V(H')|=1$, then
\begin{align*}
|E(H')|&\le |V(H')\cap(S-S')|\cdot |V(H')\cap(T-T')|+|E(S-S')|\\
&+|E[U'\cap V(H'), (S-S')\cap V(H')]|\\
&\le\max\{(|V(H')|-2)+1,2(|V(H')|-3)+1+2,2(|V(H')|-4)+2+3 \}\\
&\le2|V(H')|-3.
\end{align*}
If $|U'\cap V(H')|=2$, then
\begin{align*}
|E(H')|&\le |V(H')\cap(S-S')|\cdot |V(H')\cap(T-T')|+|E(S-S')|\\
&+|E[U'\cap V(H'), (S-S')\cap V(H')]|+|E(U')|\\
&\le\max\{(|V(H')|-3)+1,2(|V(H')|-4)+1+2,2(|V(H')|-5)+2+3 \}+1\\
&\le2|V(H')|-3.
\end{align*}
Next, we consider that $V(H_1')=V(H')\cap V(Z_k)\ne\emptyset$ and $V(H'_2)=V(H')-V(H'_1)\ne\emptyset$. Let $W=U'\cup\{w_1,w_2,u_1,u_2,u_3\}$. It is easy to check that for any subset $W'\subseteq W$ and any subgraph $Z'\subseteq Z_k$, $$|E_{H^\ast}(W'\cup V(Z'))|-|E(Z')|\le 2|W'|.$$
Since we add all vertices in $V(G)-V(Z_k)-W$ one by one by joining it with two neighbors in the already-formed subgraph, we have that for any subset $Y\subseteq V(H^\ast)-V(Z_k)$,  $$|E_{H^\ast}(Y\cup V(Z'))|-|E(Z')|\le 2|Y|,$$ for any subgraph $Z'\subseteq Z_k$. Thus, $|E(H')|\le |E(H_1')|+2|V(H_2')|\le2|V(H')|-3$.

\noindent\textbf{Case 2:} $S'\subseteq N'(v_1)$.

In this case, we have that $|N'(v_1)\cap V(Z)|\ge3$ by Conclusion \ref{con2}. Let $$T''=\{u\in T: |N_{G-e}(u)\cap V(Z)|\ge2 \}.$$
In $G-e$, we add the rest vertices of $v_1\cup T''$ to $Z_0$ one by one by joining it with two neighbors in $Z_0$ and get $Z_{h}$.
Note that $V(Z_h)\subseteq V(Z_k)$ and $v_2,v_3\notin V(Z_h)$. As $x$ has at most one neighbor in $V(Z)$, $x\notin V(Z)$ by the minimality of the $|V(Z)|$. Since $Z\subseteq H$, $S\cap V(Z)$ is independent in $H$. We also know that $v_1\cup T$ is an independent set. Therefore, $Z$ is a bipartite graph and $|V(Z)|\ge7$. It implies that $|T''|\ge|Z\cap T|\ge7-5=2$. Since $v_2,v_3\notin V(Z_k)$, there is a vertex, say $v_2$ that is adjacent to at most one vertex in $Z_k$. It follows that $v_2$ is adjacent to at most one vertex in $Z_h$. Let $Y=N(v_2)\cap Z_h$. Then $|Y|\le1$. 
We will show that $|T-T''|\ge5$. Consider $C=v_1\cup(S-V(Z)-v_2)\cup(T-T'')\cup Y$. Note that $\{x,v_2\}$ and $(T''\cup (V(Z)\cap S))-Y$ belong to different components of $G-C$. As $|T''|\ge2$ and $|V(Z)\cap S|\ge3$, there must be an edge in $(T''\cup (V(Z)\cap S))-Y$. This implies that $C$ is an essential cut. Since $G$ is essentially $9$-connected, $|C|\ge9$ and $|T-T''|\ge5$.

We add $x$ and $xv_1$ to $Z_h$ and add all vertices in $S-V(Z)$ one by one by joining it with a neighbor in $U$ and get $Z_{h'}$. Let $\ell=|S-V(Z)|+1$. Note that $|T-T''|\ge5\ge\ell+1$. Let $$W=\{u_i:u_i\in T-T''\  \&\  u_i\  is\  not\  incident\  with\  e\}.$$
If $e\in Z_{h'}$, then $e$ is not incident with any vertex in $T-T''$ and $|W|=|T-T''|\ge\ell+1$. Let $W'\subseteq W$ and $W'=\{u_1,\ldots,u_{\ell+1}\}$. Then add all vertices in $W'$ one by one by joining it with three neighbors in $Z_{h'}$ to get $Z_{h'+\ell+1}$.  At last, add the rest vertices of $T-T''$ one by one by joining it with two neighbors in the already-formed subgraph and delete $e$ to get $H^\ast$. If $e\notin Z_{h'}$, then $|W|\ge|T-T''|-1\ge\ell$. Let $W'\subseteq W$ and $W'=\{u_1,\ldots,u_{\ell}\}$. Then add all vertices in $W'$ one by one by joining it with three neighbors in $Z_{h'}$ and get $Z_{h'+\ell}$. At last, add the rest vertices of $T-T''$ one by one by joining it with two neighbors in the already-formed subgraph in $G-e$ and get $H^\ast$. Clearly, in both cases, $$|E(H^\ast)|=|E(Z_{h})|+2|V(G)-V(Z_h)|=2|V(G)|-3.$$

For any subgraph $H'\subseteq H^\ast$, if $|E(H')|\le 2|V(H')|-3$, then we are done. Otherwise, there is a minimal subgraph $\overline{Z}\subseteq H^\ast$ such that $|E(\overline{Z})|> 2|V(\overline{Z})|-3$. By the process of adding vertices and edges, there is some $u_{j_0}\in W'$ such that $u_{j_0}\in V(\overline{Z})$ and $N_{H^\ast}(u_{j_0})\subseteq V(\overline{Z})$. As in $H^\ast$ every vertex in $W'$ has at least two neighbors in $S-V(Z)$ and must have a neighbor in $\{v_2,v_3\}$, we have that $\{v_2,v_3\}\cap V(\overline{Z})\ne\emptyset$. Note that $\overline{Z}\subseteq H^\ast\subseteq H$. Thus, we can regard $\overline{Z}$ as a new $Z$. Then we get $\overline{Z_0}$ and $\overline{Z_k}$ in the same way. Let $\overline{S'}=S\cap \overline{Z}$. Note that $\overline{Z}\nsubseteq N'(v_1)$ and so $\overline{S'}\nsubseteq N'(v_1)$, which is same as Case 1.
\end{proof}

\end{document}